\theoremstyle{definition} 
\newtheorem{df}{Definition}[section] 
\newtheorem{exm}[df]{Example}   
\theoremstyle{plain}            
\newtheorem{pro}[df]{Proposition}
\newtheorem{lem}[df]{Lemma}
\newtheorem{theo}[df]{Theorem}
\newtheorem{cor}[df]{Corollary}
\newcommand{\f}{\ensuremath{\varphi}}
\newcommand{\al}{\ensuremath{\alpha}}
\newcommand{\la}{\ensuremath{\lambda}}
\newcommand{\Dcal}{\ensuremath{\mathcal{D}}}
\newcommand{\Ecal}{\ensuremath{\mathcal{E}}}
\newcommand{\Hcal}{\ensuremath{\mathcal{H}}}
\newcommand{\Kcal}{\ensuremath{\mathcal{K}}}
\newcommand{\Mcal}{\ensuremath{\mathcal{M}}}
\newcommand{\Ncal}{\ensuremath{\mathcal{N}}}
\newcommand{\Zcal}{\ensuremath{\mathcal{Z}}}
\newcommand{\rr}{\ensuremath{\mathbb{R}}}
\newcommand{\unit}{\ensuremath{\mathbf{1}}}
\newcommand{\norm}[1]{\ensuremath{\left\|#1\right\|}}
\newcommand{\set}[2]{\left\{#1\,\middle|\,#2 \right\}}
\newcommand{\Ca}{$C${\rm*}-algebra}
\newcommand{\vNa}{von Neumann algebra}
\newcommand{\AW}{$AW${\rm*}-algebra}
\newcommand{\AWf}{$AW${\rm*}-factor}
\newcommand{\AWsa}{$AW${\rm*}-subalgebra}
\newcommand{\Sic}{{\rm*}-isomorphic}
\newcommand{\Jsi}{Jordan {\rm*}-isomor\-phism}
\newcommand{\ifff}{if and only if}
\begin{document}

\begin{center}
{\Large \bf Spectral order isomorphisms and \AWf{}s}\\

{\large Martin Bohata\footnote{bohata@math.feld.cvut.cz}\\}
         \it Department of Mathematics, Faculty of Electrical Engineering,\\
        Czech Technical University in Prague, Technick\'a 2,\\ 
        166 27 Prague 6, Czech Republic        
\end{center}

{\small \textbf{Abstract:}
The paper deals with spectral order isomorphisms in the framework of \AW{}s. We establish that every spectral order isomorphism between sets of all self-adjoint operators (or between sets of all effects, or between sets of all positive operators) in \AWf{}s of Type I has a canonical form induced by a continuous function calculus and an isomorphism between projection lattices. In particular, this solves an open question about spectral order automorphisms of the set of all (bounded) self-adjoint operators on an infinite-dimensional Hilbert space. We also discuss spectral order isomorphisms preserving, in addition, orthogonality in both directions.
} 

{\small \textbf{AMS Mathematics Subject Classification:} 46L40, 47B49, 06A06} 

{\small \textbf{Keywords:} \AW{}s, spectral order, spectral order isomorphisms} 
%%%%%%%%%%%%%%%%%%%%%%%%%%%%%%%%%%%%%%%%%%%%%%%%%%%%%%%%%%%%%%%%%%%%%%%%
%%%%%%%%%%%%%%%%%%%%%%%%%%%%%%%%%%%%%%%%%%%%%%%%%%%%%%%%%%%%%%%%%%%%%%%%
\section{Introduction}

Self-adjoint operators in a \vNa{} can be equipped with a number of partial orders. The most common is L\"{o}wner order induced by positive operators. If a \vNa{} under consideration is abelian, then its self-adjoint part endowed with L\"{o}wner order forms a conditionally complete lattice. However, this is no longer true when we consider noncommutative \vNa{}s \cite{Ka51,Sh51}.

There is a natural question whether one can find a partial order on the self-adjoint part of a \vNa{} which has lattice properties and coincides with L\"{o}wner order in the abelian case. An affirmative answer was given by Olson \cite{Ol71} who introduced so called spectral order by means of spectral families in \vNa{}s. The definition of the spectral order can be directly extended to the more general framework of \AW{}s as follows. The {\it spectral order} is a binary relation $\preceq$ on the self-adjoint part of an \AW{} defined by setting $x\preceq y$ if $E_\la^y\leq E_\la^x$ for each $\la\in\rr$, where $(E^x_\la)_{\la\in\rr}$ and $(E^y_\la)_{\la\in\rr}$ are spectral families of self-adjoint operators $x$ and $y$, respectively, and $\leq$ is the L\"{o}wner order.

The spectral order has been especially studied on \vNa{}s (see, for example, \cite{Bo19,dG05} and references therein). Nevertheless, it has been considered also in contexts of \AW{}, unbounded operators, and further structures \cite{FP19,Ha07,HT16,PS12,Tu14}. From the physical point of view, the spectral order is significant in a categorical formulation of quantum theory \cite{DD14,Ha11,Wo14}.

The main aim of this paper is the study of isomorphisms between posets of self-adjoint operators equipped with the spectral order. Let $M$ and $N$ be sets of self-adjoint elements in \AW{}s $\Mcal$ and $\Ncal$, respectively. We say that a bijection $\f:M\to N$ is a {\it spectral order isomorphism} if, for all $x,y\in M$, $x\preceq y$ \ifff{} $\f(x)\preceq \f(y)$. In the special case $M=N$, a spectral order isomorphism $\f$ is called {\it spectral order automorphism}. In this paper, we show that every spectral order isomorphism between sets of all effects (i.e. positive operators in the unit ball) in \AWf{}s of Type I has the form $\f(x)=\Theta_\tau(f(x))$, where $f:[0,1]\to[0,1]$ is a strictly increasing bijection, $\tau$ is an isomorphism between projection lattices, and $\Theta_\tau$ is defined by $E^{\Theta_\tau(x)}_\la=\tau(E^{x}_\la)$ for all $\la\in\rr$. Let us remark that isomorphisms of the above form were studied for the first time by Turilova in \cite{Tu14}. As each \AWf{} of Type I is precisely a \vNa{} of all bounded operators on a Hilbert space, our result is a direct extension of a complete description of spectral order automorphisms of the set of all effects on a Hilbert space found by Moln\'{a}r, Nagy, and \v{S}emrl \cite{MN16,MS07}. We also obtain a similar form for all spectral order isomorphisms between self-adjoint parts (as well as between positive parts) of \AWf{}s of Type I. This proves the conjecture of Moln\'{a}r and \v{S}emrl \cite{MS07} about spectral order automorphisms on the set of all (bounded) self-adjoint operators on an infinite-dimensional Hilbert space. Moreover, our approach also covers finite-dimensional case with much simpler proof than that of \cite{MN16} and \cite{MS07}.

The last part of this paper is devoted to spectral order orthoisomorphisms. By a {\it spectral order orthoisomorphism}, we mean a spectral order isomorphism $\f:M\to N$ preserving orthogonality in both directions (i.e., for all $x,y\in M$, $xy=0$ \ifff{} $\f(x)\f(y)=0$). A spectral order orthoisomorphism is called a {\it spectral order orthoautomorphism}, if $M=N$. It was proved by Hamhalter and Turilova in \cite{HT17} that if $\f$ is a spectral order orthoautomorphism of the set of all effects in an \AWf{} that is not of Type III and of Type I$_2$, then $\f(x)=\psi(f(x))$, where $\psi$ is a Jordan {\rm*}-automorphism and $f:[0,1]\to[0,1]$ is a strictly increasing bijection. We generalize this theorem to spectral order orthoisomorphisms between sets of all effects in arbitrary \AWf{}s that are not of Type I$_2$. Moreover, we show that a similar statement is also true when we consider spectral order orthoisomorphisms between self-adjoin parts (or between positive parts) of \AWf{}s.
%%%%%%%%%%%%%%%%%%%%%%%%%%%%%%%%%%%%%%%%%%%%%%%%%%%%%%%%%%%%%%%%%%%%%%%%
\section{Preliminaries}

Let us start this section with summarizing some basic facts about \AW{}s. For a detailed treatment of \AW{}s, we refer the reader to \cite{Be11}. An {\it \AW{}} is a \Ca{} \Mcal{} such that for each nonempty set $S\subseteq\Mcal$ there is a projection $p\in\Mcal$ satisfying 
$$
\set{x\in\Mcal}{sx=0 \mbox{ for all } s\in S}=p\Mcal.
$$
It is well known that every von Neumann algebra is \AW{} but the converse is not true. More generally, every monotone complete \Ca{} is an \AW{}. However, it is an open question whether class of \AW{}s coincides with a class of monotone complete \Ca{}s \cite{SW15}. Each \AW{} is unital and is closed linear span of all its projections. The center
$$
\Zcal(\Mcal)=\set{x\in\Mcal}{xy=yx \mbox{ for all }y\in\Mcal}
$$
of an \AW{} \Mcal{} forms an \AWsa{} of \Mcal{}. An {\it \AWf{}} is an \AW{} with one-dimensional center. A classification of \AW{}s is analogous to that of \vNa{}s. Kaplansky \cite{Ka52} proved that an \AW{} of Type I is a \vNa{} \ifff{} its center is a \vNa{}. In particular, every \AWf{} of Type I is a von Neumann factor of Type I. In the sequel, we shall denote by $P(\Mcal)$ the set of all projections in an \AW{} $\Mcal$. The set $P(\Mcal)$ (endowed with the L\"{o}wner order) forms a complete lattice called the {\it projection lattice}. By $\Mcal_{sa}$ and $\Ecal(\Mcal)$, we shall denote the self-adjoint part of \Mcal{} and the set of all effects (i.e. positive operators in the unit ball of \Mcal{}), respectively.

From the point of view of abelian algebras, \AW{}s are more natural than \vNa{}s. The reason is that $C(X)$ is an \AW{} \ifff{} $X$ is a Stonean space (i.e. an extremally disconnected compact Hausdorff topological space). Thus every complete Boolean algebra is isomorphic to the projection lattice of an abelian \AW{}. On the other hand, the projection lattice of each abelian \AW{} is a complete Boolean algebra. Consequently, the projection lattice $(P(\Mcal),\leq)$ of an abelian \AW{} \Mcal{} is meet-infinitely distributive (see, for example, \cite[Theorem 5.13]{Ro08}). More specifically,
$$
\bigwedge_{\la\in\Lambda}(p\vee q_\la)=p\vee \bigwedge_{\la\in\Lambda}q_\la
$$
for each $e\in P(\Mcal)$ and each family $(q_\la)_{\la\in\Lambda}$ in $P(\Mcal)$. Note also that a nonempty set $S$ of mutually commuting self-adjoint operators in an \AW{} \Mcal{} generates an abelian \AWsa{} of \Mcal{}.

Let us look at an auxiliary result concerning projections.

\begin{lem}\label{supremum of infima}
	Let $I\subseteq\rr$ be an interval. If $(p_\la)_{\la\in I},(q_\la)_{\la\in I}$ are increasing nets of projections in an abelian \AW{}, then
	$$
	\bigwedge_{\la\in I}p_\la\vee q_\la =\left(\bigwedge_{\la\in I}p_\la\right)\vee \left(\bigwedge_{\mu\in I} q_\mu\right).
	$$
\end{lem}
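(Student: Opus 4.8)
The plan is to establish the two inequalities between $\bigwedge_{\la\in I}(p_\la\vee q_\la)$ and $\left(\bigwedge_{\la\in I}p_\la\right)\vee\left(\bigwedge_{\m\in I}q_\m\right)$ separately. Throughout write $p=\bigwedge_{\la\in I}p_\la$ and $q=\bigwedge_{\la\in I}q_\la$. The inequality ``$\geq$'' is immediate and uses no monotonicity: since $p\leq p_\la$ and $q\leq q_\la$ for every $\la\in I$, monotonicity of the join gives $p\vee q\leq p_\la\vee q_\la$ for each $\la$, so $p\vee q$ is a lower bound of the family $(p_\la\vee q_\la)_{\la\in I}$ and hence $p\vee q\leq\bigwedge_{\la\in I}(p_\la\vee q_\la)$.

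The reverse inequality is the heart of the matter, and here the assumption that the nets are increasing is essential. The difficulty is that the meet-infinite distributivity recalled above handles only a \emph{fixed} projection joined with a varying family, whereas here both $p_\la$ and $q_\la$ move. I would circumvent this by freezing one of the two indices. Fix $\m\in I$. For every $\la\in I$ with $\la\leq\m$ we have $q_\la\leq q_\m$ by monotonicity, whence $p_\la\vee q_\la\leq p_\la\vee q_\m$; taking infima over such $\la$ yields
\[
\bigwedge_{\la\in I}(p_\la\vee q_\la)\ \leq\ \bigwedge_{\la\leq\m}(p_\la\vee q_\la)\ \leq\ \bigwedge_{\la\leq\m}(p_\la\vee q_\m).
\]
Now the right-hand side involves the fixed projection $q_\m$ and the varying family $(p_\la)_{\la\leq\m}$, so meet-infinite distributivity applies and gives $\bigwedge_{\la\leq\m}(p_\la\vee q_\m)=q_\m\vee\bigwedge_{\la\leq\m}p_\la$.

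It remains to observe that $\bigwedge_{\la\leq\m}p_\la=p$: because the net is increasing, any lower bound of $\{p_\la\mid\la\leq\m\}$ is automatically a lower bound of the whole net (for $\la>\m$ one has $p_\la\geq p_\m$), so the two infima coincide. Thus $\bigwedge_{\la\in I}(p_\la\vee q_\la)\leq p\vee q_\m$ for every $\m\in I$, and taking the infimum over $\m$ together with one further application of meet-infinite distributivity (now with $p$ fixed and $(q_\m)_{\m\in I}$ varying) gives
\[
\bigwedge_{\la\in I}(p_\la\vee q_\la)\ \leq\ \bigwedge_{\m\in I}(p\vee q_\m)=p\vee\bigwedge_{\m\in I}q_\m=p\vee q.
\]
Combining the two inequalities finishes the proof. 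The only genuine obstacle is the two-variable nature of the meet on the left; the decisive idea is that monotonicity lets one replace the moving $q_\la$ by the frozen $q_\m$ on the lower tail $\la\leq\m$, reducing everything to the one-family distributive law, which is then applied twice.
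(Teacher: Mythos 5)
Your proof is correct and takes essentially the same route as the paper's: both use monotonicity of the nets to decouple the diagonal infimum and then apply meet-infinite distributivity twice. The only difference is organizational --- the paper decouples both indices simultaneously by proving $\bigwedge_{\lambda\in I}(p_\lambda\vee q_\lambda)=\bigwedge_{(\lambda,\mu)\in I^2}(p_\lambda\vee q_\mu)$ via $\nu=\min\{\lambda,\mu\}$, whereas you decouple one index at a time by restricting to the lower tail $\lambda\leq\mu$; the two devices are interchangeable.
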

\begin{proof}
	It is easy to see that $\bigwedge_{\la\in I}p_\la\vee q_\la\geq\bigwedge_{(\la,\mu)\in I^2}p_\la\vee q_\mu$. On the other hand, $\bigwedge_{\la\in I}p_\la\vee q_\la\leq\bigwedge_{(\la,\mu)\in I^2}p_\la\vee q_\mu$ because $p_\la\vee q_\mu\geq p_\nu\vee q_\nu\geq \bigwedge_{\la\in I}p_\la\vee q_\la$ whenever $\la,\mu\in I$ and $\nu=\min\{\la,\mu\}$. This establishes that 
	$$
	\bigwedge_{\la\in I}p_\la\vee q_\la=\bigwedge_{(\la,\mu)\in I^2}p_\la\vee q_\mu.
	$$
	
	As the projection lattice of each abelian AW*-algebra is meet-infinitely distributive, we have
	$$
	\bigwedge_{\la\in I}p_\la\vee q_\la=\bigwedge_{(\la,\mu)\in I^2}p_\la\vee q_\mu=\bigwedge_{\la\in I}\bigwedge_{\mu\in I} p_\la\vee q_\mu =\left(\bigwedge_{\la\in I} p_\la\right)\vee\left(\bigwedge_{\mu\in I} q_\mu\right).
	$$
\end{proof}

Let $\Mcal,\Ncal$ be \AW{}s. We say that $\f:P(\Mcal)\to P(\Ncal)$ is {\it orthoisomorphism} if it is a bijection preserving orthogonality in both directions. Dye's theorem \cite{Dy55} is an important extension theorem for orthoisomorphisms between projection lattices of \vNa{}s. The following generalization of this famous theorem to the context of \AW{}s was proved by Hamhalter in \cite{Ha15}.

\begin{theo}[{\cite[Theorem~4.3]{Ha15}}]\label{Dye theorem}
	Let $\Mcal$ and $\Ncal$ be \AW{}s and let \Mcal{} have no Type I$_2$ direct summand. If $\f:P(\Mcal)\to P(\Ncal)$ is an orthoisomorphism, then there is a (unique) \Jsi{} $\Phi:\Mcal\to\Ncal$ extending \f.
\end{theo}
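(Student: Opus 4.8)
The plan is to promote the bare orthogonality data carried by $\f$ into full ortholattice structure, transport it locally through Stone duality, and then confront the additivity problem, which is where the exclusion of Type I$_2$ becomes indispensable. First I would note that an orthoisomorphism automatically respects all the lattice structure one needs. The order is recoverable from orthogonality alone: since $q^\perp=\unit-q$ is the largest projection orthogonal to $q$, one has $p\leq q$ \ifff{} every projection orthogonal to $q$ is orthogonal to $p$. Hence $\f$ preserves order in both directions, so it preserves $0$ and $\unit$, all existing suprema and infima (the lattice is complete), and the orthocomplementation $p\mapsto\unit-p=\bigvee\set{r}{r\perp p}$. Because commutativity of two projections is precisely the lattice-theoretic relation of compatibility, expressible through $\vee$, $\wedge$ and ${}^\perp$, the map $\f$ also carries commuting families to commuting families.

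Next I would construct the extension locally. Any family of mutually commuting projections generates an abelian \AWsa{}; since $\f$ preserves commutativity, it maps the projection lattice of such a subalgebra $\Acal\subseteq\Mcal$ isomorphically onto the projection lattice of an abelian \AWsa{} $\Bcal\subseteq\Ncal$, and on a Boolean algebra an orthoisomorphism is exactly a Boolean isomorphism. By Stone duality for Stonean spaces this Boolean isomorphism extends uniquely to a \Sim{} $\Phi_\Acal:\Acal\to\Bcal$. Running this over all abelian subalgebras and gluing along the spectral calculus — for self-adjoint $x$ with spectral family $(E^x_\la)_{\la\in\rr}$ the projections $(\f(E^x_\la))_{\la\in\rr}$ again form a spectral family and so define $\Phi(x)$ — yields a well-defined bijection $\Phi:\Mcal_{sa}\to\Ncal_{sa}$ agreeing with a \Sim{} on every abelian subalgebra. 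In particular $\Phi$ preserves squares, $\Phi(x^2)=\Phi(x)^2$, and is real-linear on every family of mutually commuting self-adjoint elements.

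The decisive step is to prove that $\Phi$ is additive on all of $\Mcal_{sa}$, not merely on commuting elements: once additivity is secured, the polarization identity $x\circ y=\tfrac12\big((x+y)^2-x^2-y^2\big)$ combined with square preservation forces $\Phi$ to be a Jordan map. This is the main obstacle, and it is exactly here that the absence of a Type I$_2$ summand is needed. In a Type I$_2$ fibre the projections form a two-sphere and every orthogonality-preserving self-map arises from an orthogonal transformation of $\rr^3$, only some of which are additive — the familiar two-dimensional failure underlying Gleason's theorem. To obtain additivity in the remaining types I would argue in the spirit of Dye: pass to the type decomposition, reduce to homogeneous summands where three or more mutually orthogonal equivalent projections are available, and exploit this extra room through a Gleason-type rigidity for \AW{}s to pin down $\Phi$ on non-commuting projections, first deducing orthoadditivity $\Phi(p\vee q)=\Phi(p)+\Phi(q)$ for $p\perp q$ and then full additivity. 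I expect this to be by far the most delicate part of the argument.

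Finally, having a real-linear, square-preserving, involution-respecting map on the self-adjoint part, I would complexify via $\Phi(a+ib)=\Phi(a)+i\Phi(b)$ to obtain a Jordan \Sim{} $\Phi:\Mcal\to\Ncal$ extending $\f$ on projections. Uniqueness is then immediate: since an \AW{} is the norm-closed linear span of its projections, any Jordan \Sim{} extending $\f$ is determined on projections and hence, by linearity and continuity, on all of $\Mcal$.
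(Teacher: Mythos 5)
First, a point of comparison: the paper itself does not prove this statement at all --- it is imported verbatim as Hamhalter's extension of Dye's theorem to \AW{}s \cite[Theorem~4.3]{Ha15} and used as a black box. Your attempt must therefore be judged as a standalone proof of that cited theorem, and as such it has a genuine gap. The preparatory steps are sound: order and orthocomplementation are recoverable from orthogonality alone; commutativity of projections is the lattice-theoretic compatibility relation $p=(p\wedge q)\vee\bigl(p\wedge(\unit-q)\bigr)$, so \f{} restricts to a complete Boolean isomorphism between the projection lattices of abelian \AWsa{}s; and Stone duality together with the definition $E^{\Phi(x)}_\la=\f(E^x_\la)$ yields a map $\Phi$ on $\Mcal_{sa}$ that agrees with a \Sim{} on every abelian \AWsa{}. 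In particular orthoadditivity, $\Phi(p\vee q)=\f(p)+\f(q)$ for $p\perp q$, is automatic, since orthogonal projections commute.

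The gap is the step you explicitly defer: proving that this locally defined $\Phi$ is additive across \emph{non-commuting} elements. That step is not ``the most delicate part'' of the proof --- it \emph{is} the theorem. Your paragraph on it is a plan, not an argument (``argue in the spirit of Dye'', ``exploit \dots{} a Gleason-type rigidity for \AW{}s''), and the rigidity you invoke --- passage from an orthoadditive, locally multiplicative map on projections to a linear map on the algebra --- is precisely what Dye proved for \vNa{}s and what \cite{Ha15} (whose very title is ``Dye's theorem and Gleason's theorem for \AW{}s'') establishes in the \AW{} setting; invoking it would be circular. Moreover, the \AW{} case is genuinely harder than the \vNa{} case, because the standard von Neumann tools (normal states, weak compactness of the unit ball, the Bunce--Wright solution of the Mackey--Gleason problem) are unavailable, so ``in the spirit of Dye'' cannot be carried over mechanically. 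Two smaller inaccuracies: in a Type I$_2$ factor the orthoisomorphisms of the projection lattice are \emph{arbitrary} bijections of the sphere commuting with the antipodal map, not merely orthogonal transformations of $\rr^3$ (this strengthens the counterexample, but your description of it is wrong); and your well-definedness/gluing step, while correct, deserves the observation that spectral families computed in a unital \AWsa{} and in \Mcal{} agree (cf.\ Lemma~\ref{spectral families in different algebras}). Your uniqueness argument, by contrast, is fine: a \Jsi{} is isometric and the projections span a dense subspace.
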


The next proposition characterizes central projections in terms of lattice distributivity.

\begin{pro}[{\cite[Proposition~3.1]{HT17}}]\label{characterizations of central projections}
	If \Mcal{} is an \AW{} and $z\in P(\Mcal)$, then following conditions are equivalent:
		\begin{enumerate}
		\item $z$ is central.
		\item $z=(z\wedge p)\vee(z\wedge (\unit-p))$ for each $p\in P(\Mcal)$.
		\item $z\wedge (p\vee q)=(z\wedge p)\vee(z\wedge q)$ for all $p,q\in P(\Mcal)$.
	\end{enumerate}
\end{pro}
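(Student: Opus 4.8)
The plan is to establish the cycle (i) $\Rightarrow$ (iii) $\Rightarrow$ (ii) $\Rightarrow$ (i), which yields all equivalences at once. Two standing facts will be used throughout: since $\Mcal$ is the closed linear span of its projections, an element is central \ifff{} it commutes with every projection; and for projections $e,f$ one has $e\leq f$ precisely when $ef=fe=e$, while mutually orthogonal projections join to their sum. I will also use that a central projection $z$ satisfies $z\wedge r=zr$ for every projection $r$, that left multiplication by $z$ is order preserving on projections, and that every projection splits as $p=zp\vee(\unit-z)p$ into orthogonal summands.

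The implication (iii) $\Rightarrow$ (ii) is immediate: putting $q=\unit-p$ and using $p\vee(\unit-p)=\unit$, condition (iii) reads $z=z\wedge\unit=(z\wedge p)\vee(z\wedge(\unit-p))$, which is exactly (ii).

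For (ii) $\Rightarrow$ (i) I fix a projection $p$ and set $e=z\wedge p$ and $f=z\wedge(\unit-p)$. From $e\leq p$ and $f\leq\unit-p$ it follows that $ef=fe=0$ (the product factors through $p(\unit-p)=0$), so $e\vee f=e+f$, and hence (ii) gives $z=e+f$. Multiplying by $p$ on either side and using $ep=pe=e$ together with $fp=pf=0$, I obtain $zp=e=pz$. As $p$ was an arbitrary projection, $z$ commutes with every projection, hence with all of $\Mcal$, so $z\in\Zcal(\Mcal)$.

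Finally, for (i) $\Rightarrow$ (iii) the inequality $(z\wedge p)\vee(z\wedge q)\leq z\wedge(p\vee q)$ is clear, so the content is the reverse inequality, which (since $z$ is central) amounts to $z(p\vee q)\leq zp\vee zq$. Writing $r=zp\vee zq\leq z$ and $s=r\vee(\unit-z)=r+(\unit-z)$, the orthogonal decomposition $p=zp\vee(\unit-z)p$ shows $p\leq s$, and likewise $q\leq s$, whence $p\vee q\leq s$ and $z(p\vee q)\leq zs=r$. The one point that genuinely needs care is that this reverse inequality does not come from general lattice distributivity (which fails in $P(\Mcal)$) but from the special behaviour of the central $z$, reflecting the direct sum splitting $\Mcal=z\Mcal\oplus(\unit-z)\Mcal$ in which meets and joins are computed componentwise; making that componentwise computation rigorous is where I expect the main work to lie.
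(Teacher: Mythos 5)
Your proof is correct, but there is nothing in the paper to compare it against: the paper states this proposition as an imported result, citing \cite[Proposition~3.1]{HT17}, and supplies no proof of its own. Judged on its own terms, your cycle (i) $\Rightarrow$ (iii) $\Rightarrow$ (ii) $\Rightarrow$ (i) is sound. The step (iii) $\Rightarrow$ (ii) with $q=\unit-p$ is indeed immediate. In (ii) $\Rightarrow$ (i), the orthogonality of $e=z\wedge p$ and $f=z\wedge(\unit-p)$ gives $z=e+f$, and multiplying by $p$ on either side yields $zp=e=pz$; since $\Mcal$ is the closed linear span of its projections, $z$ is central. In (i) $\Rightarrow$ (iii), your argument via $r=zp\vee zq$ and $s=r+(\unit-z)$ is already complete as written: $p=zp\vee(\unit-z)p\leq s$ and likewise $q\leq s$, so $p\vee q\leq s$, and multiplying by the central projection $z$ (which preserves order on projections and satisfies $z\wedge r=zr$) gives $z\wedge(p\vee q)=z(p\vee q)\leq zs=zr=r=(z\wedge p)\vee(z\wedge q)$. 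Your closing remark that ``making that componentwise computation rigorous is where I expect the main work to lie'' undersells what you actually wrote: no further work is needed, because each standing fact you invoke (the join of orthogonal projections is their sum; $z\wedge r=zr$ and order-preservation of multiplication by $z$ for central $z$; $p=zp\vee(\unit-z)p$) is an elementary one-line verification with projections in a \Ca{}, and your $r$, $s$ computation never appeals to any unproved distributivity in $P(\Mcal)$.
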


Now we turn our attention to spectral families. Recall that a family $(E_\la)_{\la\in\rr}$ of projections in an \AW{} $\Mcal$ is called a {\it spectral family} if the following conditions hold:
\begin{enumerate}
	\item $E_\la\leq E_\mu$ whenever $\la\leq \mu$.
	\item $E_\la=\bigwedge_{\mu>\la}E_\mu$ for every $\la\in\rr$.
	\item There is a positive real number $\al$ such that $E_\la=0$ when $\la<-\al$ and $E_\la=\unit$ when $\la>\al$, where \unit{} is the unit of \Mcal{}.
\end{enumerate}
It is well known that there exists a bijection between set of all spactral families in \Mcal{} and the self-adjoint part of \Mcal{}. The spectral family $(E_\la)_{\la\in\rr}$ corresponds to $x\in\Mcal_{sa}$ \ifff{} $xE_\la\leq \la E_\la$ and $\la(\unit-E_\la)\leq x(\unit-E_\la)$ for each $\la\in\rr$. In the sequel, we shall denote by $(E^x_\la)_{\al\in\rr}$ the spectral family corresponding to $x\in\Mcal_{sa}$.  It is worthwhile to point out that all elements of $(E^x_\la)_{\al\in\rr}$ belong to the abelian \AWsa{} of \Mcal{} generated by $\{\unit,x\}$.

\begin{lem}\label{rescaling and translation}
	Let $x$ be a self-adjoint element in an \AW{}.
	\begin{enumerate}
		\item If $\al$ is a positive real number, then $E^{\al x}_\la=E^x_{\frac{\la}{\al}}$ for all $\la\in\rr$.
		\item If $\al\in\rr$, then $E^{x+\al\unit}_\la= E^{x}_{\la-\al}$ for all $\la\in\rr$
	\end{enumerate}
\end{lem}
\begin{proof}
The statements follow immediately from elementary computations.
\end{proof}

\begin{lem}\label{spectral families in different algebras}
	Let \Ncal{} be an \AWsa{} of an \AW{} \Mcal{}. Suppose that $\unit_\Mcal$ and $\unit_\Ncal$ are units of \Mcal{} and \Ncal{}, respectively. If $x\in\Ncal_{sa}$ and $(E^x_\la)_{\la\in\rr}$ is  the spectral family of $x$ in \Mcal{}, then $(\unit_{\Ncal} E^x_\la)_{\la\in\rr}$ is the spectral family of $x$ in \Ncal{} and 
	$$
	(\unit_\Mcal-\unit_\Ncal)E^x_\la=\begin{cases}0&\quad\mbox{ if }\la<0;\\ \unit_\Mcal-\unit_\Ncal&\quad\mbox{ if }\la\geq 0.\end{cases}
	$$
\end{lem}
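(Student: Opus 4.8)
The plan is to split the $\Mcal$-spectral family $(E^x_\la)_{\la\in\rr}$ along the two orthogonal corners determined by $e:=\unit_\Ncal$ and $\unit_\Mcal-e$, recognising $eE^x_\la$ as the spectral family of $x$ inside $\Ncal$ and $(\unit_\Mcal-e)E^x_\la$ as the spectral family of the zero element of $(\unit_\Mcal-e)\Mcal(\unit_\Mcal-e)$. First I would record what $x\in\Ncal_{sa}$ gives: since $e$ is the unit of $\Ncal$, $ex=xe=x$, whence $x(\unit_\Mcal-e)=(\unit_\Mcal-e)x=0$ and $e$ commutes with $x$. As every $E^x_\la$ lies in the abelian \AWsa{} of $\Mcal$ generated by $\{\unit_\Mcal,x\}$, the projection $e$ commutes with each $E^x_\la$, so $eE^x_\la$ and $(\unit_\Mcal-e)E^x_\la$ are orthogonal projections with $E^x_\la=eE^x_\la+(\unit_\Mcal-e)E^x_\la$.

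Next I would verify the inequality characterisation recalled before the statement. Multiplying $xE^x_\la\le\la E^x_\la$ and $\la(\unit_\Mcal-E^x_\la)\le x(\unit_\Mcal-E^x_\la)$ by $e$ (which commutes with $x$ and with $E^x_\la$) and using $ex=xe=x$ yields $x(eE^x_\la)\le\la(eE^x_\la)$ and $\la(e-eE^x_\la)\le x(e-eE^x_\la)$, which are precisely the two inequalities, relative to the unit $e$ of $\Ncal$, expressing that $(eE^x_\la)$ corresponds to $x$ once it is known to be a spectral family in $\Ncal$. In the complementary corner, $x(\unit_\Mcal-e)=0$ gives $x\,(\unit_\Mcal-e)E^x_\la=0$, so $(\unit_\Mcal-e)E^x_\la$ meets the inequalities for the element $0$ of $(\unit_\Mcal-e)\Mcal(\unit_\Mcal-e)$; distinguishing $\la\ge0$ (where $\unit_\Mcal-e\le E^x_\la$, since $x(\unit_\Mcal-e)=0\le\la(\unit_\Mcal-e)$) from $\la<0$ (where $x\,(\unit_\Mcal-e)E^x_\la=0\le\la(\unit_\Mcal-e)E^x_\la$ forces $(\unit_\Mcal-e)E^x_\la=0$) produces exactly the displayed formula.

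It then remains to see that $(eE^x_\la)_{\la\in\rr}$ is a spectral family in $\Ncal$, after which uniqueness of the spectral family of $x$ in $\Ncal$ settles the first assertion. Monotonicity and boundedness are immediate, so the whole difficulty — and what I expect to be the main obstacle — is right-continuity together with the attendant fact that $eE^x_\la$ actually lies in $\Ncal$; the delicate point is that infima of projections may a priori change when one passes between $\Ncal$, the abelian subalgebra generated by $x$, and $\Mcal$. Right-continuity itself is clean by an order argument: $eE^x_\la$ is a lower bound of $(eE^x_\mu)_{\mu>\la}$, and any projection $q\in\Ncal$ below every $eE^x_\mu$ is below every $E^x_\mu$, hence below $\bigwedge_{\mu>\la}E^x_\mu=E^x_\la$ (the genuine right-continuity of $(E^x_\la)$ in $\Mcal$), so that $q\le e$ gives $q\le eE^x_\la$. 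To secure membership and the faithful transfer of these meets I would pass to the abelian \AWsa{} $\Acal$ generated by $\{\unit_\Mcal,e,x\}$, in which $e$ is central and all $E^x_\la$ lie: there Lemma~\ref{supremum of infima}, applied to the orthogonal join $E^x_\mu=eE^x_\mu\vee(\unit_\Mcal-e)E^x_\mu$ of increasing nets, splits $\bigwedge_{\mu>\la}E^x_\mu$ over the two central summands and identifies $\bigwedge_{\mu>\la}eE^x_\mu$ with $eE^x_\la$, placing $eE^x_\la$ inside $\Ncal$. The reconciliation of the infimum in $\Acal$ with those in $\Mcal$ and $\Ncal$ is exactly where the sandwich is used: because the value $E^x_\la$ already belongs to $\Acal$, the greatest lower bound computed in $\Acal$ coincides with the one computed in $\Mcal$, and the same comparison transfers the meet to $\Ncal$, completing the identification.
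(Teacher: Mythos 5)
Your proposal follows the paper's own skeleton: both split $(E^x_\la)_{\la\in\rr}$ along the commuting projection $e=\unit_\Ncal$, verify the defining inequalities in the two corners, and identify the corner families via uniqueness of the spectral family satisfying those inequalities. Your inequality computations, the derivation of the formula for $(\unit_\Mcal-e)E^x_\la$ (the $\la\geq0$ case implicitly uses the standard maximality property of $E^x_\la$ among commuting projections $p$ with $xp\leq\la p$, which is true and easily checked), and your order-theoretic argument that $eE^x_\la$ is the greatest lower bound of $(eE^x_\mu)_{\mu>\la}$ are all correct.

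The gap is exactly in the step you single out as the crux: membership $eE^x_\la\in\Ncal$, and your argument for it is circular. Passing to the abelian \AWsa{} $\Acal$ generated by $\{\unit_\Mcal,e,x\}$ and applying Lemma~\ref{supremum of infima} does yield the identity $eE^x_\la=\bigwedge_{\mu>\la}eE^x_\mu$ (with meets in $\Acal$, reconciled with those in $\Mcal$ by your sandwich argument), but this identity cannot ``place $eE^x_\la$ inside $\Ncal$'': the elements $eE^x_\mu$ on the right-hand side are precisely the elements whose membership in $\Ncal$ is in question, and $\Acal\not\subseteq\Ncal$ since $\unit_\Mcal\in\Acal$. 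At no point does a family of elements of $\Ncal$ enter your argument, so there is no meet that can be ``transferred to $\Ncal$.'' The paper avoids the issue by arguing with the corner algebra instead of with $\Ncal$ directly: the four inequalities exhibit $(eE^x_\la)$ as the spectral family of $x$ in $\unit_\Ncal\Mcal\unit_\Ncal$, where membership is trivial, and then one compares $\Ncal$ with $\unit_\Ncal\Mcal\unit_\Ncal$, which have the \emph{same} unit, so that the spectral family of $x$ computed in $\Ncal$ (which lies in $\Ncal$ by construction) coincides with the one computed in the corner. That comparison --- and any honest completion of your route, e.g.\ extending the $\Ncal$-spectral family $(F_\la)$ of $x$ by $\unit_\Mcal-e$ for $\la\geq0$ and invoking uniqueness in $\Mcal$ --- ultimately rests on the standard structural fact about \AWsa{}s that projection suprema/infima (equivalently, right projections of their elements) computed in the ambient algebra again lie in the subalgebra and agree with those computed there. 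It is this fact, not Lemma~\ref{supremum of infima}, that has to carry the membership step.
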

\begin{proof}
	As all elements of $(E^x_\la)_{\la\in\rr}$ are in the abelian \AW{} generated by $\{x,\unit_\Mcal,\unit_\Ncal\}$, $\unit_\Ncal$ commutes with $E^x_\la$ for every $\la\in\rr$. Thus $\unit_{\Ncal} E^x_\la\in P(\unit_\Ncal\Mcal\unit_\Ncal)$ and 
	$$
	(\unit_\Mcal-\unit_\Ncal)E^x_\la\in P((\unit_\Mcal-\unit_\Ncal)\Mcal(\unit_\Mcal-\unit_\Ncal)).
	$$
	If $\la\in\rr$, then
	\begin{eqnarray*}
		x\unit_{\Ncal} E^x_\la 
			&\leq & \la \unit_{\Ncal} E^x_\la,\\
		x(\unit_{\Ncal} -\unit_{\Ncal} E^x_\la)
			&\geq& \la(\unit_{\Ncal} -\unit_{\Ncal} E^x_\la),\\
		(\unit_\Mcal-\unit_\Ncal)x[(\unit_\Mcal-\unit_\Ncal)E^x_\la]
			&\leq&\la[(\unit_\Mcal-\unit_\Ncal)E^x_\la],\\
		(\unit_\Mcal-\unit_\Ncal)x[(\unit_\Mcal-\unit_\Ncal)-(\unit_\Mcal-\unit_\Ncal)E^x_\la]
			&\geq& \la [(\unit_\Mcal-\unit_\Ncal)-(\unit_\Mcal-\unit_\Ncal)E^x_\la].
	\end{eqnarray*}
	Thus $(\unit_{\Ncal} E^x_\la)_{\la\in\rr}$ is the spectral family of $x$ in \Ncal{} (because $\Ncal\subseteq \unit_\Ncal\Mcal\unit_\Ncal$ and the units of $\unit_\Ncal\Mcal \unit_\Ncal$ and \Ncal are same) and $((\unit_\Mcal-\unit_\Ncal) E^x_\la)_{\la\in\rr}$ is the spectral family of $(\unit_\Mcal-\unit_\Ncal)x=0$ in $(\unit_\Mcal-\unit_\Ncal)\Mcal{}(\unit_\Mcal-\unit_\Ncal)$. Hence
	$$
	(\unit_\Mcal-\unit_\Ncal)E^x_\la=\begin{cases}0&\quad\mbox{ if }\la<0;\\ \unit_\Mcal-\unit_\Ncal&\quad\mbox{ if }\la\geq 0.\end{cases}
	$$
\end{proof}

At the end of this section, we consider some examples of spectral order isomorphisms. We shall see later that they are building blocks of a number of spectral order isomorphisms.  

\begin{exm}\label{spectral order isomorhisms and function calculus}
	Consider strictly increasing bijection $f:\rr\to\rr$. Suppose that $x$ is a self-adjoint operator in an \AW{} \Mcal{} and $C(X)$ is an abelian \AW{} \Sic{} to the \AW{} generated by $\{x,\unit\}$. Let $g\in C(X)$ correspond to $x$. The functions $E^g_\la$ and $E^{f\circ g}_\la$ are characteristic functions of clopen sets $X_\la=X\setminus\overline{g^{-1}((\la,\infty))}$ and 
	$$
	Y_\la=X\setminus\overline{(f\circ g)^{-1}((\la,\infty))}=X\setminus\overline{g^{-1}((f^{-1}(\la),\infty))}=X_{f^{-1}(\la)}, 
	$$
	respectively. This shows that $E^{f(x)}_\la=E^x_{f^{-1}(\la)}$ for every $\la\in\rr$. Using the definition of spectral order we conclude that $\f:\Mcal_{sa}\to\Mcal_{sa}$ given by $\f(x)=f(x)$ is a spectral order automorphism. 
	
	Note that $f(0)=0$ \ifff{} $f$ preserves orthogonality of self-adjoint elements in both directions. This can be shown by passing to the abelian \AW{} $C(X)$ \Sic{} to the \AW{} generated by $\{\unit,x,y\}$. Let $g_1$ and $g_2$ be functions in $C(X)$ corresponding to $x$ and $y$, respectively. Suppose that $f(0)=0$. Then $f^{-1}(0)=0$ and so $g_1g_2=0$ \ifff{} $(f\circ g_1)(f\circ g_2)=0$. It remains to prove that $f(0)=0$ whenever $f$ preserves orthogonality. To see this let $f(0)\neq 0$. As $0$ is orthogonal to itself, we have $f(0)f(0)=0$ which is a contradiction. A simple consequence of just proved equivalence says that the map \f{} defined above is a spectral order orthoautomorphism \ifff{} $f(0)=0$.
	
	In a similar way, we can obtain spectral order automorphisms of $\Ecal(\Mcal)$ and $\Mcal_+$ by considering strictly increasing bijections on $[0,1]$ and $[0,\infty)$, respectively. In these cases, the strictly increasing bijection $f$ automatically satisfies $f(0)=0$ and so every spectral order automorphism of given form is a spectral order orthoautomorphism. 
\end{exm}

Let $\Mcal$ and $\Ncal$ be \AW{}s. A bijection $\tau:P(\Mcal)\to P(\Ncal)$ is called a {\it projection isomorphism} if it preserves the L\"{o}wner order in both directions (i.e., for all $p,q\in P(\Mcal)$, $p\leq q$ \ifff{} $\tau(p)\leq \tau(q)$). The structure of all projection isomorphisms is well known in the case of $\Mcal=B(\Hcal)$ and $\Ncal=B(\Kcal)$, where \Hcal{} and \Kcal{} are Hilbert spaces of dimension at least 3. (As usual, $B(\Hcal)$ denotes the \vNa{} of all bounded operators on a Hilbert space \Hcal{}.) Indeed, let $L(\Hcal)$ and $L(\Kcal)$ be lattices of all closed subspaces of \Hcal{} and \Kcal{}, respectively. There is a natural correspondence between $\tau$ and an lattice isomorphism $\sigma: L(\Hcal)\to L(\Kcal)$. If \Hcal{} and \Kcal{} are finite-dimensional spaces (of dimension at least 3), then the fundamental theorem of projective geometry (see, for example, \cite[p. 203]{Be95}) says that $\sigma(X)=sX$ for some semi-linear bijection $s$ from \Hcal{} onto \Kcal{}. In the infinite-dimensional case, $\sigma(X)=sX$, where $s$ is a linear or conjugate-linear bijection from \Hcal{} onto \Kcal{} \cite{FL84}. 

\begin{exm}
	Given a projection isomorphism $\tau:P(\Mcal)\to P(\Ncal)$ we define a map $\Theta_\tau: \Mcal_{sa}\to \Ncal_{sa}$ by 
	$$
	E^{\Theta_\tau(x)}_\la=\tau(E^x_\la)
	$$
	for all $x\in \Mcal_{sa}$ and $\la\in\rr$. It is easy to verify that $\Theta_\tau$ is a spectral order isomorphism.
	
	By restrictions, we get spectral order isomorphisms between positive parts and between sets of all effects. 
\end{exm}

In the sequel, $\Theta_\tau$ will always denote just described spectral order isomorphism determined by a projection isomorphism $\tau$. 

Motivated by the previous examples, we say that a spectral order isomorphism $\f:\Ecal(\Mcal)\to\Ecal(\Ncal)$ (respectively $\f:\Mcal_+\to\Ncal_+$, $\f:\Mcal_{sa}\to\Ncal_{sa}$) is {\it canonical} if there are a strictly increasing bijection $f:[0,1]\to[0,1]$ (respectively $f:[0,\infty)\to[0,\infty)$, $f:\rr\to\rr$) and a projection isomorphism $\tau: P(\Mcal)\to P(\Ncal)$ such that 
$$
\f(x)=\Theta_\tau(f(x))
$$ 
for all $x\in\Ecal(\Mcal)$ (respectively $x\in\Mcal_+$, $x\in\Mcal_{sa}$).

Now we give an example of a spectral order orthoisomorphism of the form $\Theta_\tau$. Before doing this, we recall that a linear bijection $\psi:\Mcal\to\Ncal$ between \AW{}s \Mcal{} and \Ncal{} is called a {\it \Jsi{}} if $\psi(x^2)=\psi(x)^2$ and $\psi(x^*)=\psi(x)^*$ for all $x\in\Mcal$. 

\begin{exm}
	Let $\psi:\Mcal\to \Ncal$ be a \Jsi{} between \AW{}s of \Mcal{} and \Ncal{}. We observe that 
	$$
	E^{\psi(x)}_\la=\psi(E^{x}_\la)
	$$ 
	for all $\la\in\rr$ whenever $x\in\Mcal_{sa}$. If a projection isomorphism $\tau$ is a restriction of $\psi$, then $\Theta_\tau(x)=\psi(x)$ for all $x\in\Mcal_{sa}$. Moreover, $\psi$ preserves orthogonality relation in both directions and so a restriction of $\psi$ is a spectral order orthoisomorphism.
\end{exm}
%%%%%%%%%%%%%%%%%%%%%%%%%%%%%%%%%%%%%%%%%%%%%%%%%%%%%%%%%%%%%%%%%%%%%%%%
\section{Spectral order on AW*-algebras}

We begin this section with simple observations about spectral order. Statements in the following lemma were proved for \vNa{}s in papers \cite{dG05,Ol71}.

\begin{lem}\label{basic properties of specral order}
	Let \Mcal{} be an \AW{}. Assume that $x,y\in\Mcal_{sa}$, $\al,\beta\in\rr$, and $\al>0$.
	\begin{enumerate}
		\item $x\preceq y$ \ifff{} $\al x+\beta\unit\preceq \al y+\beta\unit$.
		\item The spectral order $\preceq$ coincides with the L\"{o}wner order $\leq$ on $P(\Mcal)$.
		\item If $x\preceq y$, then $x\leq y$.
		\item Suppose that $x$ commutes with $y$. Then $x\preceq y$ \ifff{} $x\leq y$.
	\end{enumerate}
\end{lem}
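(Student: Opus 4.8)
The plan is to treat the four statements in order, relying on the rescaling and translation rules of Lemma~\ref{rescaling and translation}, the explicit form of the spectral family of a projection, the integral representation of a self-adjoint element by its spectral family, and finally a passage to an abelian $C(X)$.

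For (i), I would transport spectral families. By Lemma~\ref{rescaling and translation}, $E^{\al x+\beta\unit}_\la=E^x_{(\la-\beta)/\al}$ and $E^{\al y+\beta\unit}_\la=E^y_{(\la-\beta)/\al}$ for every $\la\in\rr$. Since $\al>0$, the map $\la\mapsto(\la-\beta)/\al$ is an increasing bijection of $\rr$, so the family of inequalities $E^{\al y+\beta\unit}_\la\le E^{\al x+\beta\unit}_\la$ ($\la\in\rr$) holds if and only if $E^y_\mu\le E^x_\mu$ for every $\mu\in\rr$. By the definition of the spectral order this is exactly $\al x+\beta\unit\preceq\al y+\beta\unit \iff x\preceq y$.

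For (ii), I would first identify the spectral family of a projection $p\in P(\Mcal)$: using the characterization $xE_\la\le\la E_\la$ and $\la(\unit-E_\la)\le x(\unit-E_\la)$, one checks directly that $E^p_\la=0$ for $\la<0$, $E^p_\la=\unit-p$ for $0\le\la<1$, and $E^p_\la=\unit$ for $\la\ge 1$. Hence, for $p,q\in P(\Mcal)$, the inequalities $E^q_\la\le E^p_\la$ are automatic for $\la<0$ and $\la\ge 1$, while for $0\le\la<1$ they read $\unit-q\le\unit-p$, i.e.\ $p\le q$. Thus $p\preceq q$ if and only if $p\le q$.

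Statement (iii) is the substantive one, and I expect the integral representation to be the main obstacle. Choosing $\al>0$ larger than $\norm{x}$ and $\norm{y}$, so that $E^x_{-\al}=E^y_{-\al}=0$ and $E^x_\al=E^y_\al=\unit$, the bijection between spectral families and self-adjoint elements is realized by the norm-convergent Riemann--Stieltjes integral $x=\integ{-\al}{\al}{\la}{E^x_\la}$, and integration by parts gives $x=\al\unit-\integ{-\al}{\al}{E^x_\la}{\la}$ (and likewise for $y$). The convergence in norm rests on the fact that $\la\mapsto E^x_\la$ is increasing, so on any partition the increments $E^x_{\la_{i+1}}-E^x_{\la_i}$ are mutually orthogonal projections and the gap between upper and lower Riemann sums is controlled by the mesh; this is the point that needs care in the \AW{} setting. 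Granting it, $x\preceq y$ gives $E^x_\la-E^y_\la\ge 0$ for all $\la$, whence $y-x=\integ{-\al}{\al}{(E^x_\la-E^y_\la)}{\la}\ge 0$, since the positive cone is norm-closed and each Riemann sum is a nonnegative combination of positive elements. Note that this argument uses no commutativity.

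Finally, for (iv) I would reduce to the commutative case. The forward implication is already (iii). For the converse, since $x$ commutes with $y$, they generate an abelian \AWsa{} \Sic{} to some $C(X)$ with $X$ Stonean; let $g_1,g_2\in C(X)$ be the functions corresponding to $x,y$. As recalled in Example~\ref{spectral order isomorhisms and function calculus}, $E^{g_i}_\la$ is the characteristic function of $X\setminus\overline{g_i^{-1}((\la,\infty))}$. If $x\le y$ then $g_1\le g_2$ pointwise, so $g_1^{-1}((\la,\infty))\subseteq g_2^{-1}((\la,\infty))$ and hence $X\setminus\overline{g_2^{-1}((\la,\infty))}\subseteq X\setminus\overline{g_1^{-1}((\la,\infty))}$, i.e.\ $E^y_\la\le E^x_\la$ for every $\la\in\rr$. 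This is precisely $x\preceq y$.
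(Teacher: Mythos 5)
Your proof is correct, and for parts (iii) and (iv) it follows a genuinely different route from the paper's. Parts (i) and (ii) are essentially the paper's own argument: (i) is immediate from Lemma~\ref{rescaling and translation}, and (ii) follows by inspecting the spectral families of projections. For (iii) and (iv), however, the paper does not argue directly: setting $\al=\max\{\norm{x},\norm{y}\}$, it uses (i) to replace $x$ and $y$ by the effects $\frac{1}{2\al}x+\frac{1}{2}\unit$ and $\frac{1}{2\al}y+\frac{1}{2}\unit$, and then cites Corollaries~1 and~2 of \cite{HT16}, where these statements are established for effects in \AW{}s. You instead give self-contained arguments: Olson's classical integral argument \cite{Ol71} for (iii) (norm-convergent Riemann--Stieltjes representation, integration by parts, norm-closedness of the positive cone), and a reduction to $C(X)$, $X$ Stonean, for (iv); both are sound in the \AW{} setting. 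The analytic point you rightly flag in (iii) --- norm convergence of the Riemann--Stieltjes sums --- is most cleanly settled by working inside the abelian \AWsa{} generated by $\{\unit,x\}$: there the increments $E^x_{\la_i}-E^x_{\la_{i-1}}$ are characteristic functions of disjoint clopen sets on which the corresponding function lies between $\la_{i-1}$ and $\la_i$, so upper and lower sums differ in norm by at most the mesh; Lemma~\ref{spectral families in different algebras} identifies the spectral family computed in that subalgebra with the one computed in $\Mcal$, and this same identification is what your part (iv) quietly uses when it invokes Example~\ref{spectral order isomorhisms and function calculus} for the subalgebra generated by $\{\unit,x,y\}$. (In (iv) it is also worth noting explicitly that positivity in a \Csa{} agrees with positivity in $\Mcal$, so $x\leq y$ really is the pointwise inequality $g_1\leq g_2$.) The trade-off: your route makes the lemma independent of the external reference \cite{HT16}, at the cost of redoing analysis the paper outsources; the paper's reduction-to-effects trick is shorter because it reuses known results.
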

\begin{proof}
Using Lemma~\ref{rescaling and translation}, we immediately obtain the desired equivalence in (i).

By considering spectral families of $p,q\in P(\Mcal)$, $p\preceq q$ \ifff{} $p\leq q$. This shows (ii).

It remains to prove (iii) and (iv). Set $\al=\max\{\norm{x},\norm{y}\}$. If $\al=0$, then both statements are clear. Assume that $\al>0$. Then 
$$
\frac{1}{2\al}x+\frac{1}{2}\unit,\frac{1}{2\al}y+\frac{1}{2}\unit\in\Ecal(\Mcal).
$$
Furthermore, we see from (i) that 
$$
x\preceq y\Leftrightarrow \frac{1}{2\al}x+\frac{1}{2}\unit\preceq\frac{1}{2\al}y+\frac{1}{2}\unit.
$$
Now it follows from \cite[Corollary~1]{HT16} and \cite[Corollary~2]{HT16} that (iii) and (iv) are true.
\end{proof}

\begin{pro}
	Let \Ncal{} be an \AWsa{} of an \AW{} \Mcal{}. Suppose that $x,y\in \Ncal_{sa}$. Then $x\preceq y$ in $\Mcal_{sa}$ \ifff{} $x\preceq y$ in $\Ncal_{sa}$.
\end{pro}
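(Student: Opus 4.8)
The plan is to relate the two spectral orders directly through the spectral families, using Lemma~\ref{spectral families in different algebras} to convert between spectral families computed in $\Mcal$ and in $\Ncal$. Write $\unit_\Mcal,\unit_\Ncal$ for the respective units, and let $(E^x_\la)_{\la\in\rr}$, $(E^y_\la)_{\la\in\rr}$ be the spectral families of $x$ and $y$ in $\Mcal$. By Lemma~\ref{spectral families in different algebras}, the spectral families of $x$ and $y$ in $\Ncal$ are $(\unit_\Ncal E^x_\la)_{\la\in\rr}$ and $(\unit_\Ncal E^y_\la)_{\la\in\rr}$, and $\unit_\Ncal$ commutes with every $E^x_\la$ and $E^y_\la$. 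Hence, for each $\la$, I get the orthogonal decompositions $E^x_\la=\unit_\Ncal E^x_\la+(\unit_\Mcal-\unit_\Ncal)E^x_\la$ and $E^y_\la=\unit_\Ncal E^y_\la+(\unit_\Mcal-\unit_\Ncal)E^y_\la$ into a piece in $\unit_\Ncal\Mcal\unit_\Ncal$ and a piece in $(\unit_\Mcal-\unit_\Ncal)\Mcal(\unit_\Mcal-\unit_\Ncal)$.

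The crucial observation is that the ``outer'' pieces coincide for $x$ and $y$ and therefore cancel in any order comparison. Indeed, Lemma~\ref{spectral families in different algebras} computes $(\unit_\Mcal-\unit_\Ncal)E^x_\la=(\unit_\Mcal-\unit_\Ncal)E^y_\la$, equal to $0$ for $\la<0$ and to $\unit_\Mcal-\unit_\Ncal$ for $\la\geq 0$, since this quantity does not depend on the underlying self-adjoint element. I denote this common projection by $r_\la$, so that $E^x_\la=\unit_\Ncal E^x_\la+r_\la$ and $E^y_\la=\unit_\Ncal E^y_\la+r_\la$ with $r_\la$ orthogonal to both $\unit_\Ncal E^x_\la$ and $\unit_\Ncal E^y_\la$ (the orthogonality following from the commutation of $\unit_\Ncal$ with the spectral projections).

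It then remains to verify the elementary fact that adding a common orthogonal projection preserves and reflects the L\"owner order on projections: if $a,b\leq\unit_\Ncal$ and $r_\la\leq\unit_\Mcal-\unit_\Ncal$ are projections with $a r_\la=r_\la a=b r_\la=r_\la b=0$, then $b+r_\la\leq a+r_\la$ \ifff{} $b\leq a$. This is the routine computational step: expanding $(b+r_\la)(a+r_\la)=ba+r_\la$ reduces the inequality $b+r_\la\leq a+r_\la$ to the condition $ba=b$, i.e.\ to $b\leq a$. Applying this with $a=\unit_\Ncal E^x_\la$ and $b=\unit_\Ncal E^y_\la$ yields, for every $\la\in\rr$, the equivalence $E^y_\la\leq E^x_\la$ \ifff{} $\unit_\Ncal E^y_\la\leq\unit_\Ncal E^x_\la$. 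Quantifying over $\la$ gives exactly $x\preceq y$ in $\Mcal_{sa}$ \ifff{} $x\preceq y$ in $\Ncal_{sa}$. The only point needing care—and the step I would watch most closely—is the orthogonality and commutation bookkeeping for the corner decomposition; notably, $x$ and $y$ need not commute with each other, but this is never used, since all the control is exercised through $\unit_\Ncal$, which commutes with each individual spectral projection by Lemma~\ref{spectral families in different algebras}.
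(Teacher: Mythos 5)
Your proposal is correct and follows essentially the same route as the paper: both rest on Lemma~\ref{spectral families in different algebras}, the decomposition $E^x_\la=\unit_\Ncal E^x_\la+(\unit_\Mcal-\unit_\Ncal)E^x_\la$, and the fact that the corner pieces $(\unit_\Mcal-\unit_\Ncal)E^x_\la$ agree for all elements of $\Ncal_{sa}$. The only cosmetic difference is that you treat both directions symmetrically via the cancellation of the common projection $r_\la$ (which, note, is immediate for the L\"owner order since $(a+r_\la)-(b+r_\la)=a-b$, so no orthogonality bookkeeping is really needed there), whereas the paper gets the forward direction by compressing with $\unit_\Ncal$ and uses the decomposition only for the converse.
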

\begin{proof}
	Let $(E^x_\la)_{\la\in\rr}$ and $(E^y_\la)_{\la\in\rr}$ be spectral families of $x$ and $y$, respectively, in \Mcal{}. If $x\preceq y$ in \Mcal{}, then $E^y_\la\leq E^x_\la$ for all $\la\in\rr$. Since the unit $\unit_\Ncal$ of \Ncal{} commutes with $E^x_\la$ and $E^y_\la$ for all $\la\in\rr$, $\unit_\Ncal{}E^y_\la\leq \unit_\Ncal{}E^x_\la$ for all $\la\in\rr$. It follows from Lemma~\ref{spectral families in different algebras} that $x\preceq y$ in \Ncal{}.
	
	Conversely, let $x\preceq y$ in \Ncal{}. Then $\unit_\Ncal{}E^y_\la\leq \unit_\Ncal{}E^x_\la$ for all $\la\in\rr$ by Lemma~\ref{spectral families in different algebras}. Moreover, Lemma~\ref{spectral families in different algebras} implies that 
	$$
	(\unit_\Mcal-\unit_\Ncal)E^x_\la=(\unit_\Mcal-\unit_\Ncal)E^y_\la
	$$
	for all $\la\in\rr$. Therefore,
	$$
	E^y_\la=\unit_\Ncal{}E^y_\la+(\unit_\Mcal-\unit_\Ncal)E^y_\la \leq \unit_\Ncal{}E^x_\la+(\unit_\Mcal-\unit_\Ncal)E^x_\la=E^x_\la
	$$
	for all $\la\in\rr$. Hence $x\preceq y$ in \Mcal{}.
\end{proof}

It can be proved that the self-adjoint part of an \AW{} equipped with the spectral order is a conditionally complete lattice (the proof is the same as for \vNa{}s \cite{Ol71}). Suprema and infima are determined by their spectral families as follows. Let \Mcal{} be an \AW{} and let $M\subseteq\Mcal_{sa}$ be nonempty. If $M$ is bounded above, then its supremum $\bigvee_{x\in M} x$ in $(\Mcal_{sa},\preceq)$ is a self-adjoint element with the spectral family
$$
\left(E^{\bigvee_{x\in M} x}_\la\right)_{\la\in\rr}
=\left(\bigwedge_{x\in M}E^{x}_\la\right)_{\la\in\rr}.
$$
If $M$ is bounded below, then its infimum $\bigwedge_{x\in M}x$ in $(\Mcal_{sa},\preceq)$ is a self-adjoint element with the spectral family
$$
\left(E^{\bigwedge_{x\in M} x}_\la\right)_{\la\in\rr}
=\left(\bigwedge_{\mu>\la}\bigvee_{x\in M}E^{x}_\mu\right)_{\la\in\rr}.
$$
The next two assertions state that suprema and infima with respect to the spectral order are stable under taking smaller posets. The latter is a generalization of \cite[Proposition~2.2]{Bo19} to the case of \AW{}s.

\begin{pro}\label{independence}
	Let \Ncal{} be an \AWsa{} of an \AW{} \Mcal{}. Suppose that $M\subseteq\Ncal_{sa}$ is nonempty. 
	\begin{enumerate}
		\item If $M$ has an upper bound in $(\Mcal_{sa},\preceq)$, then both the supremum of $M$ in $(\Mcal_{sa},\preceq)$ and the supremum of $M$ in $(\Ncal_{sa},\preceq)$ exist and coincide.
		\item If $M$ has a lower bound in $(\Mcal_{sa},\preceq)$, then both the infimum of $M$ in $(\Mcal_{sa},\preceq)$ and the infimum of $M$ in $(\Ncal_{sa},\preceq)$ exist and coincide.
	\end{enumerate}
\end{pro}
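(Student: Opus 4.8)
The plan is to deduce both parts from the supremum/infimum formulas already recorded in the text together with the preceding Proposition, and then to carry out a spectral-family computation that carefully tracks the unit defect $q:=\unit_\Mcal-\unit_\Ncal$. Throughout I write $(E^x_\la)_{\la\in\rr}$ for the spectral family of $x$ in \Mcal{} and, recalling Lemma~\ref{spectral families in different algebras}, set $F^x_\la:=\unit_\Ncal E^x_\la$ for the spectral family of $x\in\Ncal_{sa}$ in \Ncal{}; that lemma also gives $E^x_\la=F^x_\la$ when $\la<0$ and $E^x_\la=F^x_\la\vee q$ when $\la\geq0$, where $q$ is orthogonal to every $F^x_\la$.

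First I would settle existence. For (i) the supremum of $M$ in $(\Mcal_{sa},\preceq)$ exists by conditional completeness; call it $s$, so that $E^s_\la=\bigwedge^{\Mcal}_{x\in M}E^x_\la$. To see that $M$ is bounded above in $(\Ncal_{sa},\preceq)$ as well, I take a $\preceq$-upper bound $y$ of $M$ in \Mcal{}: the third defining condition of a spectral family supplies $\al>0$ with $E^y_\la=\unit_\Mcal$ for $\la>\al$, and $E^y_\la\leq E^x_\la$ then forces $E^x_\la=\unit_\Mcal$, hence $F^x_\la=\unit_\Ncal$, for every $x\in M$ and every $\la>\al$. Consequently $x\preceq(\al+1)\unit_\Ncal$ in \Ncal{} for all $x\in M$, so by conditional completeness of $(\Ncal_{sa},\preceq)$ the supremum $t:=\bigvee^{\Ncal}_{x\in M}x$ exists and satisfies $F^t_\la=\bigwedge^{\Ncal}_{x\in M}F^x_\la$.

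The heart of the matter is to prove $s=t$, i.e. $E^s_\la=E^t_\la$ for every $\la$. Applying Lemma~\ref{spectral families in different algebras} to $t\in\Ncal_{sa}$ gives $E^t_\la=F^t_\la$ for $\la<0$ and $E^t_\la=F^t_\la\vee q$ for $\la\geq0$. Substituting the corresponding expressions for $E^x_\la$ into the supremum formula, the identity $E^s_\la=E^t_\la$ reduces, for each $\la$, to two lattice facts: (II) $\bigwedge^{\Mcal}_x(F^x_\la\vee q)=\left(\bigwedge^{\Mcal}_x F^x_\la\right)\vee q$, valid since each $F^x_\la$ is orthogonal to $q$; and (I) $\bigwedge^{\Mcal}_x F^x_\la=\bigwedge^{\Ncal}_x F^x_\la$, i.e. that the meet of a family of projections from \Ncal{} is the same whether computed in \Ncal{} or in \Mcal{}. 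Fact (II) I would prove directly from the orthomodular law: writing $r=\bigwedge^{\Mcal}_x(F^x_\la\vee q)\geq q$ one has $r=q\vee(r\wedge\unit_\Ncal)$, and $r\wedge\unit_\Ncal\leq(F^x_\la\vee q)\wedge\unit_\Ncal=F^x_\la$ by distributivity in the Boolean sublattice generated by the commuting projections $F^x_\la,q,\unit_\Ncal$, whence $r\leq q\vee\bigwedge^{\Mcal}_x F^x_\la$; the reverse inequality is immediate.

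Fact (I) is where the AW*-subalgebra structure is essential, and I expect it to be the main obstacle: because \Ncal{} carries the smaller unit $\unit_\Ncal$, the right annihilator in \Mcal{} of any subset of \Ncal{} automatically contains $q\Mcal$, so no naive "same annihilating projection" argument applies and the defect $q$ must be carried along. Concretely, for $\{p_i\}\subseteq P(\Ncal)$ the \Mcal{}-annihilator of $\{\unit_\Ncal-p_i\}$ is $\left(\bigwedge^{\Mcal}_i(p_i\vee q)\right)\Mcal$ while its \Ncal{}-annihilator is $\left(\bigwedge^{\Ncal}_i p_i\right)\Ncal$, and the defining compatibility of an AW*-subalgebra identifies these as $\bigwedge^{\Mcal}_i(p_i\vee q)=\left(\bigwedge^{\Ncal}_i p_i\right)\vee q$; combined with Fact (II) and cancellation of the orthogonal summand $q$ this yields (I). With (I) and (II) established, $E^s_\la=E^t_\la$ for all $\la$, so $s=t$; as a consistency check, the preceding Proposition shows that $t$ is a $\preceq$-upper bound of $M$ in \Mcal{}, giving $s\preceq t$ directly. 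Part (ii) then follows by the dual argument, replacing the supremum formula by the infimum formula $E^{\bigwedge_x x}_\la=\bigwedge_{\mu>\la}\bigvee_x E^x_\mu$ and Facts (I),(II) by their join-versions, or alternatively by applying (i) to $-M$ through the order anti-automorphism $x\mapsto-x$ of $(\Mcal_{sa},\preceq)$.
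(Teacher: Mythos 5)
Your proposal is correct and takes essentially the same route as the paper: both proofs compare the spectral families of the two suprema using Lemma~\ref{spectral families in different algebras} together with the supremum formula $E^{\bigvee_{x\in M}x}_\la=\bigwedge_{x\in M}E^{x}_\la$, and both deduce (ii) from (i) via the order-reversing map $t\mapsto -t$. The only difference is explicitness: the paper's short chain of equalities silently uses exactly the points you isolate — that boundedness above transfers to $(\Ncal_{sa},\preceq)$, and that meets of projections of $\Ncal$ agree whether computed in $P(\Ncal)$ or in $P(\Mcal)$ — so your Facts (I) and (II) are elaborations of steps the paper takes for granted rather than a different argument.
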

\begin{proof}
	\begin{enumerate}
		\item The existence of suprema is ensured by the fact that $(\Mcal_{sa},\preceq)$ and $(\Ncal_{sa},\preceq)$ are conditionally complete lattices. If $x\in\Mcal_{sa}$, then we denote by $(E^x_\la)_{\la\in\rr}$ the spectral family of $x$ in \Mcal{}. Let $a$ be the supremum of $M$ in \Mcal{} and let $b$ be the supremum of $M$ in \Ncal{}. By Lemma~\ref{spectral families in different algebras},
		\begin{eqnarray*}
			E^b_\la&=&\unit_\Ncal E^b_\la + (\unit_\Mcal{}-\unit_\Ncal )E^b_\la
			= \bigwedge_{x\in M} \unit_\Ncal E^x_\la + \bigwedge_{x\in M} (\unit_\Mcal{}-\unit_\Ncal )E^x_\la\\
			&=&\unit_\Ncal \bigwedge_{x\in M} E^x_\la + (\unit_\Mcal{}-\unit_\Ncal) \bigwedge_{x\in M} E^x_\la
			= \bigwedge_{x\in M} E^x_\la=E^a_\la
		\end{eqnarray*}			
		for all $\la\in\rr$. Thus $a=b$.	
		
		\item This follows from (i) by considering the order-reversing map $t\mapsto-t$.
	\end{enumerate}
\end{proof}

\begin{pro}\label{sublattices}
	Let \Mcal{} be an \AW{}. 
	\begin{enumerate}
		\item If $M\subseteq \Mcal_+$ is nonempty and bounded above in $(\Mcal_{sa},\preceq)$, then the supremum of $M$ in $(\Mcal_{sa},\preceq)$ is a positive element. 
		\item If $M\subseteq \Mcal_+$ is nonempty, then the infimum of $M$ in $(\Mcal_{sa},\preceq)$ is a positive element.
		\item If $L\in\left\{\Ecal(\Mcal),P(\Mcal)\right\}$, then the supremum and the infimum of every nonempty subset of $L$ in $(\Mcal_{sa},\preceq)$ belong to $L$.
	\end{enumerate}
\end{pro}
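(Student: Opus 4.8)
The plan is to reduce each assertion to a condition on spectral families, exploiting the explicit descriptions of the spectral families of suprema and infima recalled just before the proposition, namely $E^{\bigvee_{x\in M}x}_\la=\bigwedge_{x\in M}E^x_\la$ and $E^{\bigwedge_{x\in M}x}_\la=\bigwedge_{\mu>\la}\bigvee_{x\in M}E^x_\mu$. The three characterizations I would record first are: a self-adjoint $a$ lies in $\Mcal_+$ \ifff{} $E^a_\la=0$ for all $\la<0$; it lies in $\Ecal(\Mcal)$ \ifff{} in addition $E^a_\la=\unit$ for all $\la\ge 1$; and it is a projection \ifff{} moreover $\la\mapsto E^a_\la$ is constant on $[0,1)$. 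All three follow from the defining relations $aE^a_\la\le\la E^a_\la$ and $\la(\unit-E^a_\la)\le a(\unit-E^a_\la)$, which pin down the spectrum of $a$; I would state them as an elementary preliminary step.

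For (i), since every $x\in M$ is positive we have $E^x_\la=0$ for $\la<0$, whence $E^{\bigvee_{x\in M}x}_\la=\bigwedge_{x\in M}E^x_\la=0$ for all $\la<0$, so the supremum is positive. For (ii) note first that $0$ is a lower bound of any $M\subseteq\Mcal_+$, so the infimum in $(\Mcal_{sa},\preceq)$ exists by conditional completeness. Given $\la<0$, pick $\mu$ with $\la<\mu<0$; then $\bigvee_{x\in M}E^x_\mu=0$, and since this value appears among the terms of the meet $\bigwedge_{\mu>\la}$, we get $E^{\bigwedge_{x\in M}x}_\la=0$. Hence the infimum is positive.

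For (iii) with $L=\Ecal(\Mcal)$, every effect satisfies $0\preceq x\preceq\unit$, so suprema and infima exist; positivity of both is furnished by (i) and (ii), and it remains to check $E_\la=\unit$ for $\la\ge 1$: for the supremum $\bigwedge_{x\in M}\unit=\unit$, and for the infimum every inner join $\bigvee_{x\in M}E^x_\mu$ with $\mu>\la\ge 1$ equals $\unit$, so the outer meet is $\unit$. For $L=P(\Mcal)$ I would substitute the three-interval form of a projection's spectral family ($0$ below $0$, $\unit-p$ on $[0,1)$, $\unit$ from $1$ on) into the two formulas. For the supremum this gives $E_\la=\bigwedge_{x\in M}(\unit-p)=\unit-\bigvee_{x\in M}p$ on $[0,1)$, constant in $\la$; together with the values $0$ and $\unit$ outside $[0,1)$ this is exactly the spectral family of the projection $\bigvee_{x\in M}p$. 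For the infimum the same substitution yields $\bigvee_{x\in M}(\unit-p)=\unit-\bigwedge_{x\in M}p$ on $[0,1)$, identifying it as $\bigwedge_{x\in M}p$. The De Morgan identities used here hold because $p\mapsto\unit-p$ is an order-reversing involution of the complete lattice $P(\Mcal)$ and therefore interchanges arbitrary joins and meets.

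The main obstacle is the bookkeeping in the infimum formula $\bigwedge_{\mu>\la}\bigvee_{x\in M}E^x_\mu$: one must verify that, for each relevant position of $\la$, the extra meet over $\mu>\la$ does not change the value delivered by the inner join, which amounts to checking right-continuity at the two breakpoints $0$ and $1$ and to selecting $\mu$ in the correct subinterval. Once this boundary analysis is carried out, every case collapses to recognizing a spectral family of the prescribed type, and the only remaining input is the elementary De Morgan law in $P(\Mcal)$.
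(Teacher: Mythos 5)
Your proof is correct: the three spectral-family characterizations you state are standard consequences of the defining relations, and substituting them into the formulas $E^{\bigvee x}_\la=\bigwedge_x E^x_\la$ and $E^{\bigwedge x}_\la=\bigwedge_{\mu>\la}\bigvee_x E^x_\mu$ handles all cases, including the boundary analysis at $0$ and $1$ and the De Morgan step in $P(\Mcal)$. The paper omits this proof, referring to the von Neumann algebra case in \cite[Proposition~2.2]{Bo19}, and your direct verification via spectral families is exactly the argument that omission points to, so this is essentially the intended approach rather than a different route.
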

\begin{proof}
The proof is similar to that of \cite[Proposition~2.2]{Bo19} and so it is omitted.
\end{proof}

We shall denote suprema and infima of a set $M$ in posets $(\Mcal_{sa},\preceq)$, $(\Mcal_{+},\preceq)$, $(\Ecal(\Mcal),\preceq)$, and $(P(\Mcal),\leq)=(P(\Mcal),\preceq)$ by symbols $\bigvee_{x\in M} x$ and $\bigwedge_{x\in M} x$, respectively. We have seen in the previous result that this simple notation cannot lead to any possible misunderstanding.

\begin{lem}\label{supremum of orthogonal elements}
Let \Mcal{} be an \AW{}. If $x,y\in\Mcal_{sa}$ are mutually orthogonal, then 
	$$
	x\vee y=x^{+}+y^{+},
	$$
where $x^{+}$ and $y^{+}$ are the positive parts of $x$ and $y$, respectively.
\end{lem}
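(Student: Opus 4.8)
The plan is to reduce to the abelian case and identify the spectral-order supremum with a pointwise maximum. First I would note that orthogonality of self-adjoint elements is automatically two-sided: from $xy=0$ we get $yx=(xy)^{*}=0$, so $x$ and $y$ commute. Hence $\{\unit,x,y\}$ generates an abelian \AWsa{} $\Ncal$ of \Mcal{}, which we may identify with some $C(X)$, $X$ Stonean. Letting $g_1,g_2\in C(X)$ correspond to $x,y$, orthogonality becomes $g_1g_2=0$, i.e. $g_1$ and $g_2$ have disjoint cozero sets.

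Next I would compute the supremum of $\{x,y\}$ inside $(\Ncal_{sa},\preceq)$. Since $\Ncal$ is abelian, all its elements commute, so by parts (iii) and (iv) of Lemma~\ref{basic properties of specral order} the spectral order on $\Ncal_{sa}$ agrees with the L\"owner (that is, pointwise) order: the continuous function $\max(g_1,g_2)$ is a $\preceq$-upper bound of $\{g_1,g_2\}$ by (iv), and any $\preceq$-upper bound $z$ dominates $g_1,g_2$ in the L\"owner order by (iii), so $z\geq\max(g_1,g_2)$ pointwise and hence $\max(g_1,g_2)\preceq z$ again by (iv). Thus $x\vee y=\max(g_1,g_2)$ in $\Ncal_{sa}$.

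The key pointwise identity is $\max(g_1,g_2)=g_1^{+}+g_2^{+}$, which follows from the disjointness of supports: at any point of $X$ at most one of $g_1,g_2$ is nonzero, and a short check of the resulting cases yields the equality. Since $g_i^{+}$ corresponds to $x^{+}$ and $y^{+}$ under the functional calculus, this shows that $\max(g_1,g_2)$ is exactly the element $s:=x^{+}+y^{+}\in\Ncal\subseteq\Mcal$.

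Finally I would transfer this back to \Mcal{}. As $s$ commutes with $x$ and $y$ and satisfies $x\leq s$ and $y\leq s$ in the L\"owner order, Lemma~\ref{basic properties of specral order}(iv) shows that $s$ is an upper bound of $\{x,y\}$ in $(\Mcal_{sa},\preceq)$. Proposition~\ref{independence}(i) then guarantees that the supremum of $\{x,y\}$ in $(\Mcal_{sa},\preceq)$ exists and coincides with the one computed in $(\Ncal_{sa},\preceq)$, namely $s$; hence $x\vee y=x^{+}+y^{+}$. The only points requiring care are the two-way passage between the spectral and L\"owner orders in the abelian algebra (so that the supremum really \emph{is} the pointwise maximum, not merely bounded by it) and the invocation of Proposition~\ref{independence}, which is what legitimizes computing the supremum in the smaller algebra $\Ncal$; the pointwise case analysis itself is routine. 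I do not expect any serious obstacle here. One could alternatively bypass the abelian reduction and verify directly, using the formula $E^{x\vee y}_\la=E^x_\la\wedge E^y_\la$ stated above, that $E^{x^{+}+y^{+}}_\la=E^x_\la\wedge E^y_\la$ for every $\la$ (splitting into the cases $\la\geq 0$ and $\la<0$), but the route through Proposition~\ref{independence} is cleaner.
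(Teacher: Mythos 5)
Your proposal is correct and takes essentially the same route as the paper: the paper's proof also observes that orthogonal self-adjoint elements commute, notes the statement "clearly holds" in the abelian case $\Mcal=C(X)$ with $X$ Stonean, and reduces the general case to it via Proposition~\ref{independence}. Your write-up simply makes explicit the details (the identification of the spectral-order supremum with the pointwise maximum, and the pointwise identity $\max(g_1,g_2)=g_1^{+}+g_2^{+}$) that the paper leaves to the reader.
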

\begin{proof}
The statement clearly holds for $\Mcal=C(X)$, where $X$ is Stonean. Since $x$ commutes with $y$, we can reduce the general case to the above special case by Proposition~\ref{independence}.
\end{proof}

The content of the following lemma appeared in the paper \cite{MS07} without proof. Therefore, we present a proof for convenience of the reader.

\begin{lem}\label{supremum of multiples of projections}
	Let \Mcal{} be an \AW{} and let $p,q\in P(\Mcal)$. If real numbers $\al$ and $\beta$ satisfy $0\leq \al\leq \beta$, then 
	$\al p\vee \beta q=\al(p\vee q-q)+\beta q$.
\end{lem}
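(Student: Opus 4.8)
The plan is to prove this by reducing to the commutative case, exactly as was done in Lemma~\ref{supremum of orthogonal elements}. The key point is that $\al p \vee \beta q$ is a supremum in the spectral order lattice, and we want to identify it explicitly. Since $p,q,$ and the unit all commute, the von Neumann--type subalgebra they generate is abelian, hence isomorphic to some $C(X)$ with $X$ Stonean. By Proposition~\ref{independence}, the supremum computed inside this abelian subalgebra agrees with the supremum in $\Mcal$, so it suffices to verify the identity for projections (characteristic functions of clopen sets) in $C(X)$.

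\emph{First} I would set up the commutative reduction carefully. Because $\al p$ commutes with $\beta q$, the set $\{\al p, \beta q\}$ generates an abelian \AWsa{} of $\Mcal$, and $\al p \vee \beta q$ may be computed there by Proposition~\ref{independence}(i) (note the right-hand side $\al(p\vee q - q) + \beta q$ is a positive element, witnessing an upper bound, so the supremum exists). In $C(X)$, write $p = \chi_A$ and $q = \chi_B$ for clopen sets $A,B \subseteq X$. \emph{Then} both sides become explicit functions: the supremum of $\al\chi_A$ and $\beta\chi_B$ in the spectral (equivalently, pointwise, since this is the commutative/L\"owner case by Lemma~\ref{basic properties of specral order}(iv)) order is the pointwise maximum $\max\{\al\chi_A,\beta\chi_B\}$, while the right-hand side is $\al(\chi_{A\cup B} - \chi_B) + \beta\chi_B = \al\chi_{A\setminus B} + \beta\chi_B$.

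\emph{The main step} is then the pointwise verification, which splits into four regions of $X$. On $X\setminus(A\cup B)$ both sides vanish; on $A\setminus B$ the left side is $\max\{\al,0\}=\al$ and the right side is $\al$; on $B\setminus A$ the left side is $\max\{0,\beta\}=\beta$ and the right side is $\beta$; and on $A\cap B$ the left side is $\max\{\al,\beta\}=\beta$ (using $\al\le\beta$) while the right side is $0 + \beta = \beta$. Thus the two functions agree pointwise on all of $X$, hence as elements of $C(X)$, and transporting back through the isomorphism gives the claimed identity in $\Mcal$.

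\emph{The only subtlety}, and the part I would be most careful about, is the hypothesis $\al \le \beta$: it is precisely what makes the maximum on the overlap $A\cap B$ equal to $\beta$ rather than to a region-dependent value, matching the decomposition $p\vee q - q = \chi_{A\setminus B}$ on the right. I would also confirm that in the commutative setting the spectral order supremum genuinely coincides with the pointwise maximum — this follows since on a commutative algebra the spectral order equals the L\"owner order by Lemma~\ref{basic properties of specral order}(iv), and in $C(X)$ the L\"owner supremum of a commuting pair is the pointwise maximum. No genuine obstacle arises beyond this bookkeeping; the content is entirely in the commutative reduction plus the four-case pointwise check.
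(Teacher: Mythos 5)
Your argument has a genuine gap at its very first step: you assert that ``$p$, $q$, and the unit all commute,'' but the lemma is stated for \emph{arbitrary} projections $p,q\in P(\Mcal)$, and two projections in a noncommutative \AW{} need not commute. (Contrast this with Lemma~\ref{supremum of orthogonal elements}, where the hypothesis of mutual orthogonality $xy=0$ does force commutation; no such hypothesis appears here.) Consequently $\{\al p,\beta q\}$ does not generate an abelian \AWsa{} in general, there is no $C(X)$ picture with $p=\chi_A$, $q=\chi_B$, and the entire reduction via Proposition~\ref{independence} collapses. The nontrivial content of the lemma is precisely the noncommuting case: when $p$ and $q$ commute the identity is the easy four-region pointwise check you carried out, but when they do not, even $p\vee q$ is a genuinely lattice-theoretic object and the claimed equality cannot be verified pointwise.

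The paper's proof works instead with spectral families, which is meaningful for noncommuting projections. From the lattice formula for suprema one computes $E^{\al p\vee\beta q}_\la=E^{\al p}_\la\wedge E^{\beta q}_\la$, which equals $0$ for $\la<0$, equals $(\unit-p)\wedge(\unit-q)$ on $[0,\al)$, equals $\unit-q$ on $[\al,\beta)$, and equals $\unit$ for $\la\geq\beta$. On the other side, $q\leq p\vee q$ implies that $p\vee q$ commutes with $\unit-q$, so $p\vee q-q=(p\vee q)\wedge(\unit-q)$ is a projection orthogonal to $q$; hence the spectral family of $\al(p\vee q-q)+\beta q$ can be written down explicitly, and the two families are identified using the projection-lattice computation $[((\unit-p)\wedge(\unit-q))\vee q]\wedge(\unit-q)=(\unit-p)\wedge(\unit-q)$. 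Your strategy would be correct, and would essentially repeat the paper's treatment of Lemma~\ref{supremum of orthogonal elements}, if the statement carried the additional hypothesis $pq=qp$; as it stands, it proves only that strictly weaker special case.
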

\begin{proof}
	The case $\al=0$ is clear because $0\vee \beta q=\beta q$ by Lemma~\ref{supremum of orthogonal elements}.
	
	If $0<\al=\beta$, then $\al p\vee \beta q=\al(p\vee q-q)+\beta q$ because multiplication by a positive scalar is a spectral order isomorphism (see Lemma~\ref{basic properties of specral order}).
	
	Suppose that $0<\al<\beta$. Using Lemma~\ref{rescaling and translation}, we can directly verify that
	$$
	E^{\al p\vee\beta q}_{\la}=\begin{cases}
	0, &\quad \la<0;\\
	(\unit-p)\wedge(\unit-q), &\quad \la\in[0,\al);\\
	(\unit-q), &\quad \la\in[\al,\beta);\\
	\unit, &\quad \la\geq \beta.
	\end{cases}
	$$
	As $q\leq p\vee q$, $p\vee q$ commutes with $\unit-q$ and so 
	$$
	p\vee q-q=(p\vee q)(\unit-q)=(p\vee q)\wedge(\unit-q).
	$$
	Moreover, $q$ is orthogonal to $p\vee q-q$. Consequently, 
	$$
	E^{\al(p\vee q-q)+\beta q}_\la=\begin{cases}
	0, &\quad \la<0;\\
	[((\unit-p)\wedge(\unit-q))\vee q]\wedge(\unit-q), &\quad \la\in[0,\al);\\
	(\unit-q), &\quad \la\in[\al,\beta);\\
	\unit, &\quad \la\geq \beta.
	\end{cases}
	$$
	Since $q$ is orthogonal to $(\unit-p)\wedge(\unit-q)$ and $\unit-q$ commutes with $[(\unit-p)\wedge(\unit-q)]+ q$, we have
	\begin{eqnarray*}
		[((\unit-p)\wedge(\unit-q))\vee q]\wedge(\unit-q)
		&=&[((\unit-p)\wedge(\unit-q))+ q]\wedge(\unit-q)\\
		&=&[((\unit-p)\wedge(\unit-q))+ q](\unit-q)\\
		&=&(\unit-p)\wedge(\unit-q).
	\end{eqnarray*}
	Hence 
	$$
	E^{\al p\vee\beta q}_{\la}=E^{\al(p\vee q-q)+\beta q}_\la
	$$
	for all $\la\in\rr$.
\end{proof}

A projection $p$ in an \AW{} \Mcal{} is said to be {\it atomic} if there is no nonzero projection $q$ in \Mcal{} such that $q\neq p$ and $q\leq p$. We shall denote by $P_{at}(\Mcal)$ the set of all atomic projections in an \AW{} \Mcal{}. Now we characterize effects corresponding nonzero scalar multiples of atomic projections in terms of the spectral order. 

\begin{pro}\label{characterization of atomic projections on effects}
	Let $\Mcal$ be an \AW{} and let $x\in \Ecal(\Mcal)$ be nonzero. Then the following statements are equivalent:
	\begin{enumerate}
		\item There are $\la\in (0,1]$ and an atomic projection $e\in \Mcal$ such that $x=\la e$.
		\item If $y,z\in\Ecal(\Mcal)$ satisfy $y,z\preceq x$, then $y\preceq z$ or $z\preceq y$.
	\end{enumerate}
\end{pro}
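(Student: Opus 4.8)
\emph{Proof plan.}

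The plan is to prove the two implications separately, treating the reverse one by contraposition. For (i)~$\Rightarrow$~(ii), suppose $x=\la e$ with $e$ atomic and $\la\in(0,1]$. By Lemma~\ref{rescaling and translation} the spectral family of $x$ is $E^x_\mu=\unit-e$ for $\mu\in[0,\la)$, $E^x_\mu=\unit$ for $\mu\ge\la$, and $E^x_\mu=0$ for $\mu<0$. If $y\in\Ecal(\Mcal)$ and $y\preceq x$, then $E^x_\mu\le E^y_\mu$ forces $\unit-e\le E^y_\mu$ for $\mu\in[0,\la)$ and $E^y_\mu=\unit$ for $\mu\ge\la$. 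Since $\unit-e\le E^y_\mu$, the projection $E^y_\mu-(\unit-e)$ lies below $e$, so atomicity of $e$ gives $E^y_\mu\in\{\unit-e,\unit\}$ for every $\mu\ge0$; monotonicity of the spectral family then yields $y=se$ for some $s\in[0,\la]$. All such elements are scalar multiples of the single projection $e$, hence commute, and by Lemma~\ref{basic properties of specral order}(iv) they are pairwise comparable in $\preceq$. Thus (ii) holds.

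For (ii)~$\Rightarrow$~(i) I argue contrapositively: assuming $x$ is not of the form $\la e$ with $e$ atomic, I exhibit $y,z\in\Ecal(\Mcal)$ with $y,z\preceq x$ that are incomparable, splitting according to the spectrum $\sigma(x)$. \emph{Case 1: $x$ is not a scalar multiple of a projection}, equivalently $\sigma(x)$ contains two distinct positive numbers $a<b$. Here I pass to the abelian \AWsa{} generated by $\{\unit,x\}$, identified with some $C(X)$ with $X$ Stonean and $x$ corresponding to $g\in C(X)$ attaining the values $a$ and $b$. The crucial simplification is that inside this abelian algebra the spectral order coincides with the L\"owner, i.e.\ pointwise, order by Lemma~\ref{basic properties of specral order}(iv); moreover any $h\in C(X)$ with $0\le h\le g$ commutes with $x$ and hence satisfies $h\preceq x$. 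I therefore take continuous functions $\phi,\psi\colon[0,1]\to[0,1]$ with $0\le\phi,\psi\le\mathrm{id}$ whose graphs cross between $a$ and $b$, for instance $\phi=\min(\cdot\,,a)$ and a $\psi$ that vanishes on $[0,a]$, rises to the value $b$ at $t=b$, and stays $\le t$, so that $\phi(a)>\psi(a)$ while $\phi(b)<\psi(b)$. Then $y=\phi(x)$ and $z=\psi(x)$ are effects below $x$, and on the nonempty sets $\{g=a\}$ and $\{g=b\}$ the inequalities between $y$ and $z$ point in opposite directions, so $y$ and $z$ are pointwise, hence $\preceq$-, incomparable.

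\emph{Case 2: $x=\la p$ with $p$ a projection.} Since $x$ violates (i) and $\la=\norm{x}\in(0,1]$, $p$ cannot be atomic, so there is a projection $f$ with $0\ne f<p$ and $p-f\ne0$. A direct spectral-family computation via Lemma~\ref{rescaling and translation} gives $\la f\preceq x$ and $\la(p-f)\preceq x$, while $\la f$ and $\la(p-f)$ are orthogonal and nonzero, hence $\preceq$-incomparable (they commute, so $\preceq$ is the L\"owner order by Lemma~\ref{basic properties of specral order}(iv), and orthogonal nonzero positive elements are L\"owner-incomparable). In either case (ii) fails, completing the contrapositive.

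The main obstacle is Case~1 of the reverse implication, because $x$ may have purely continuous spectrum with no nontrivial spectral projection available to play the role of $e$, which makes any attempt to split $x$ through its own spectral projections awkward. Reducing to the commutative picture $C(X)$, where the spectral order becomes the pointwise order, is exactly what removes this difficulty: the incomparable pair is produced simply as two order-preserving functions of $x$ whose graphs cross over the spectrum. The remaining routine points to check are that $\sigma(x)$ contains two positive values precisely when $x$ fails to be a scalar multiple of a projection, and the elementary spectral-family identities underlying both the chain description in (i)~$\Rightarrow$~(ii) and the relations $\la f,\la(p-f)\preceq x$ in Case~2.
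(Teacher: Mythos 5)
Your proof is correct, and its skeleton matches the paper's: both directions rest on the same two dichotomies (either $x$ has two distinct nonzero spectral values, or $x$ is a scalar multiple of a non-atomic projection), and your Case~2 is verbatim the paper's final step, producing the incomparable pair $\la f$, $\la(p-f)$ below $\la p$. The differences are in how the two key steps are implemented. For (i)$\Rightarrow$(ii), the paper argues via the hereditary compression: $y,z\preceq\la e$ gives $y,z\le\la e$ by Lemma~\ref{basic properties of specral order}(iii), hence $y,z\in e\Mcal e$, and $e\Mcal e=\cc e$ because $e$ is atomic; you instead compute spectral families and use atomicity to force $E^y_\mu\in\{\unit-e,\unit\}$, which amounts to reproving by hand the special case of $e\Mcal e=\cc e$ that is needed --- slightly longer but self-contained, and it correctly accounts for the direction convention $y\preceq x\Leftrightarrow E^x_\mu\le E^y_\mu$. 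For the ``two nonzero values'' case of (ii)$\Rightarrow$(i), the paper separates two points of the Stonean spectrum $X$ by disjoint clopen sets and takes the orthogonal cut-downs $\chi_{O_1}f$, $\chi_{O_2}f$, exploiting the extremal disconnectedness of the spectrum of the generated abelian \AWsa{}; you instead build the incomparable pair as $\phi(x),\psi(x)$ with $\phi,\psi$ continuous, dominated by the identity, and crossing between two spectral values $a<b$. Your construction needs only the continuous function calculus on the \Csa{} generated by $\{x,\unit\}$ together with Lemma~\ref{basic properties of specral order}(iii)--(iv), so it is marginally more elementary and portable (no appeal to clopen separation), at the cost of the routine verifications you flag yourself: that failure of (i) in this case is equivalent to $\sigma(x)$ containing two positive values, that the level sets $\{g=a\}$ and $\{g=b\}$ are nonempty, and that a crossing pair $\phi,\psi$ with $0\le\phi,\psi\le\mathrm{id}$ exists (your explicit $\phi=\min(\cdot\,,a)$ and the piecewise-linear $\psi$ do the job).
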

\begin{proof}
Assume that (i) holds and $y,z\in\Ecal(\Mcal)$ satisfy $y,z\preceq x$. Then $y,z\leq \la e$. Since $e\Mcal{}e$ is a hereditary \AWsa{} of $\Mcal$, we have $y,z\in e\Mcal{}e$. From the fact that $e\Mcal e$ contains only the scalar multiples of $e$, we see that $y=\mu e$ and $z=\nu e$ for some $\mu,\nu\in(0,1]$. Consequently, $y\preceq z$ or $z\preceq y$.

In order to prove that (ii) implies (i), consider the abelian \AW{} \Ncal{} generated by $x$. Clearly, $\Ncal\simeq C(X)$, where $X$ is a Stonean space. Let $f\in C(X)$ correspond to $x$. Suppose that $f$ attains two different nonzero values at points $t_1, t_2\in X$. Take disjoint clopen subsets $O_1$ and $O_2$ of $X$ such that $t_1\in O_1$ and $t_2\in O_2$. Consider $g_1=\chi_{O_1} f$ and $g_2=\chi_{O_2} f$, where $\chi_{O_1}$ and $\chi_{O_2}$ are characteristic functions of $O_1$ and $O_2$, respectively. Then $g_1\not\leq g_2$ and $g_2\not\leq g_1$ which is a contradiction with (ii). Therefore, $f=\la\chi_O$ for some $\la\in(0,1]$, where $\chi_O$ is a characteristic function of a clopen set $O$. Hence there is a nonzero projection $e\in P(\Mcal)$ such that $x=\la e$. Suppose that $e$ is not atomic. Then one can find a nonzero projection $p\in P(\Mcal)$ such that $p\leq e$ and $p\neq e$. This implies that $\la p\preceq \la e$ and $\la(e-p)\preceq \la e$. However, $\la(e-p)\not\preceq \la p$ and $\la p\not\preceq \la (e-p)$ which is a contradiction with (ii). Thus $e$ is atomic projection.
\end{proof}

Recall the notion of a distributive element in a lattice. Let $(P,\leq)$ be a lattice. We say that an element $z\in P$ is {\it distributive} if 
$$
z\vee (x\wedge y)=(z\vee x)\wedge(z\vee y)
$$
for all $x,y\in P$. We denote by $\Dcal(P,\leq)$ the set of all distributive elements in $(P,\leq)$. In the next proposition, we characterize some classes of central elements in terms of distributivity. It turns out that this observation is essential to prove various results about spectral order isomorphisms.

\begin{pro}\label{central operators}
	Let \Mcal{} be an \AW{}. Then
	\begin{enumerate}
		\item $\Zcal(\Mcal)_{sa}=\Dcal(\Mcal_{sa},\preceq)$;
		\item $\Zcal(\Mcal)_+=\Dcal(\Mcal_+,\preceq)$;
		\item $\Ecal(\Zcal(\Mcal))=\Dcal(\Ecal(\Mcal),\preceq)$.
	\end{enumerate}
\end{pro}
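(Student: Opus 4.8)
The plan is to prove (i) in full and to obtain (ii) and (iii) by the very same argument after checking that all the test elements I use lie in $\Mcal_+$, respectively in $\Ecal(\Mcal)$. This is legitimate because, by Proposition~\ref{sublattices}, the positive cone and the set of effects are closed under spectral-order suprema and infima, so distributivity inside these subposets is meaningful and the computations below stay within them. For each part I must establish two inclusions: that a central element is distributive, and conversely that a distributive element is central.

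For $\Zcal(\Mcal)_{sa}\subseteq\Dcal(\Mcal_{sa},\preceq)$ I would argue directly with spectral families. If $z$ is central then $(E^z_\la)_{\la\in\rr}$ lies in the abelian \AWsa{} generated by $\{\unit,z\}$, which is contained in $\Zcal(\Mcal)$, so every $E^z_\la$ is a central projection. Using $E^{x\vee y}_\la=E^x_\la\wedge E^y_\la$ and $E^{x\wedge y}_\la=\bigwedge_{\mu>\la}(E^x_\mu\vee E^y_\mu)$, together with right-continuity $E^z_\la=\bigwedge_{\mu>\la}E^z_\mu$ and the elementary identity $\big(\bigwedge_\mu a_\mu\big)\wedge\big(\bigwedge_\mu b_\mu\big)=\bigwedge_\mu(a_\mu\wedge b_\mu)$, I compute for arbitrary $x,y\in\Mcal_{sa}$ that $E^{z\vee(x\wedge y)}_\la=\bigwedge_{\mu>\la}\big(E^z_\mu\wedge(E^x_\mu\vee E^y_\mu)\big)$, while $E^{(z\vee x)\wedge(z\vee y)}_\la=\bigwedge_{\mu>\la}\big((E^z_\mu\wedge E^x_\mu)\vee(E^z_\mu\wedge E^y_\mu)\big)$. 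Centrality of $E^z_\mu$ and Proposition~\ref{characterizations of central projections}(iii) make the two families agree for every $\mu$, so $z\vee(x\wedge y)=(z\vee x)\wedge(z\vee y)$ and $z$ is distributive. Restricting $x,y$ to $\Mcal_+$ or $\Ecal(\Mcal)$ gives the corresponding inclusions in (ii) and (iii).

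For the converse I would show that a distributive $z$ has all of its spectral projections central, whence $z$ is central. Fix $\la\in\rr$ and projections $p,q$, and apply distributivity of $z$ to the test elements $x=\la p+\nu(\unit-p)$ and $y=\la q+\nu(\unit-q)$ with $\nu>\la$ (these are positive, and are effects when $0\le\la<\nu\le1$, which is what (ii) and (iii) need). Reading off spectral families at level $\la$ exactly as above, the equality $z\vee(x\wedge y)=(z\vee x)\wedge(z\vee y)$ collapses to the single identity
$$E^z_\la\wedge(p\vee q)=\bigwedge_{\mu>\la}\big((E^z_\mu\wedge p)\vee(E^z_\mu\wedge q)\big).$$
I then argue by contraposition: if some $E^z_{\la_0}$ were not central, Proposition~\ref{characterizations of central projections}(iii) supplies $p,q$ with $E^z_{\la_0}\wedge(p\vee q)>(E^z_{\la_0}\wedge p)\vee(E^z_{\la_0}\wedge q)$, and I would refine $p,q$ to be transverse to the spectral increment of $z$ just above $\la_0$, i.e. so that $(E^z_\mu\wedge p)\vee(E^z_\mu\wedge q)=(E^z_{\la_0}\wedge p)\vee(E^z_{\la_0}\wedge q)$ for $\mu$ slightly larger than $\la_0$. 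For such a pair the right-hand side above equals $(E^z_{\la_0}\wedge p)\vee(E^z_{\la_0}\wedge q)$, strictly below the left-hand side, contradicting distributivity. Hence every $E^z_\la$ is central, and since the central projections are closed under the meets $E^z_\la=\bigwedge_{\mu>\la}E^z_\mu$ this gives $z\in\Zcal(\Mcal)$. For (ii) and (iii) the projections $E^z_\la$ with $\la<0$ (respectively with $\la$ outside $[0,1]$) are automatically $0$ or $\unit$, so only $\la$ in the relevant range needs treatment and the positive, respectively effect, test elements suffice.

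The main obstacle is precisely this transversality refinement of the witnessing pair: the infimum $\bigwedge_{\mu>\la}$ imposed by the spectral-order meet controls only right-limits, so a naive witness of non-centrality of $E^z_{\la_0}$ may be absorbed by the larger projections $E^z_\mu$ with $\mu>\la_0$ and fail to yield a strict inequality. In an \AWf{} of Type I, i.e. in $B(\Hcal)$, this is easy: one takes $p,q$ to be rank-one projections spanning a plane that meets the range of $E^z_{\la_0}$ while their generating vectors have spectral support reaching past $\la_0$, forcing $E^z_\mu\wedge p=E^z_{\la_0}\wedge p$ for the relevant $\mu$. In a general \AW{} I expect to secure the same effect via the comparison theory of projections together with right-continuity of the spectral family; carrying out this selection in full generality is the delicate point, the remainder being the routine spectral-family bookkeeping indicated above.
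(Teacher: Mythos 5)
Your forward inclusion $\Zcal(\Mcal)_{sa}\subseteq\Dcal(\Mcal_{sa},\preceq)$ is correct and is essentially the paper's own computation. The converse, however, contains a genuine gap, and you have located it yourself: the ``transversality refinement'' of the witnessing pair $p,q$ is not a routine detail but precisely the hard point of the proposition, and it is left unproven. Your contraposition yields a contradiction only if you can arrange $\bigwedge_{\mu>\la_0}\bigl((E^z_\mu\wedge p)\vee(E^z_\mu\wedge q)\bigr)=(E^z_{\la_0}\wedge p)\vee(E^z_{\la_0}\wedge q)$, and there is no general mechanism for this: the projections $p$, $q$, $E^z_\mu$ need not commute, so the meet over $\mu>\la_0$ cannot be exchanged with the join --- such an exchange is exactly what fails in a non-distributive projection lattice --- and a witness of non-centrality of $E^z_{\la_0}$ can indeed be absorbed as $\mu\downarrow\la_0$. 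Appealing to comparison theory in the expectation that suitable $p,q$ exist is not a proof, and the rank-one sketch for $B(\Hcal)$ does not transfer: the proposition is stated for arbitrary \AW{}s, which may have no atomic projections at all.

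The paper's proof avoids this difficulty rather than solving it. Instead of arbitrary witnesses $p,q$, it tests distributivity of $z$ against $x=\norm{z}(2p-\unit)$ and $y=\norm{z}(\unit-2p)$, whose spectral projections are \emph{constant}, equal to $\unit-p$ and $p$ respectively, on the whole interval $[-\norm{z},\norm{z})$. Since $x\wedge y=-\norm{z}\unit\preceq z$, distributivity gives $z=(z\vee x)\wedge(z\vee y)$; the projections $E^z_\mu\wedge(\unit-p)$ and $E^z_\nu\wedge p$ appearing in the spectral family of the right-hand side are orthogonal across the two families and increasing within each family, hence all lie in a common abelian \AWsa{}. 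There Lemma~\ref{supremum of infima} legitimizes exchanging $\bigwedge_{\mu>\la}$ with $\vee$, and right-continuity $E^z_\la=\bigwedge_{\mu>\la}E^z_\mu$ collapses the identity to $E^z_\la=(E^z_\la\wedge p)\vee\bigl(E^z_\la\wedge(\unit-p)\bigr)$, which is centrality of $E^z_\la$ by Proposition~\ref{characterizations of central projections}(ii). Note that your own setup nearly contains this fix: had you specialized your test elements to $q=\unit-p$, i.e. $x=\la p+\nu(\unit-p)$ and $y=\la(\unit-p)+\nu p$, the two families of increments would be orthogonal, Lemma~\ref{supremum of infima} would apply, and no refinement of witnesses would be needed --- criterion (ii) of Proposition~\ref{characterizations of central projections} then replaces criterion (iii).
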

\begin{proof}
We shall prove only the statement (i). Proofs of remaining assertions (ii) and (iii) are similar. Let $z\in \Zcal(\Mcal)_{sa}$. Then $E^{z}_\la\in \Zcal(\Mcal)$ for all $\la\in\rr$. Take $x,y\in \Mcal_{sa}$. Applying Proposition~\ref{characterizations of central projections},
	\begin{eqnarray*}
		E^{z\vee(x\wedge y)}_\la &=& E^{z}_\la\wedge E^{x\wedge y}_\la
		= E^{z}_\la\wedge \bigwedge_{\mu>\la} (E^{x}_\mu\vee E^{y}_\mu)
		= \bigwedge_{\mu>\la} E^{z}_\mu\wedge  (E^{x}_\mu\vee E^{y}_\mu)\\
		&=&\bigwedge_{\mu>\la} (E^{z}_\mu\wedge E^{x}_\mu)\vee (E^{z}_\mu\wedge E^{y}_\mu)
		= \bigwedge_{\mu>\la} E^{z\vee x}_\mu\vee E^{z\vee y}_\mu
		=E^{(z\vee x)\wedge(z\vee y)}_\la
	\end{eqnarray*}
for all $\la\in\rr$. Hence 
$$
z\vee (x\wedge y)=(z\vee x)\wedge(z\vee y)
$$
and so $\Zcal(\Mcal)_{sa}\subseteq\Dcal(\Mcal_{sa},\preceq)$.
	
It remains to prove the reverse inclusion. Assume that $z\in\Dcal(\Mcal_{sa},\preceq)$ is nonzero and that $p\in P(\Mcal)$. It is easy to see that
$$
E^{z}_\la=(E^{z}_\la\wedge p)\vee [E^{z}_\la\wedge (\unit-p)]
$$
whenever $\la<-\norm{z}$ and $\la\geq \norm{z}$. As $z\in\Dcal(\Mcal_{sa},\preceq)$ and
$$
\norm{z}(2p-\unit)\wedge \norm{z}(\unit-2p)=-\norm{z}\unit,
$$
we have
$$
z=z\vee \left[\norm{z}(2p-\unit)\wedge \norm{z}(\unit-2p)\right]=\left(z\vee \norm{z}(2p-\unit)\right)\wedge\left(z\vee \norm{z}(\unit-2p)\right).
$$
Hence 
	\begin{eqnarray*}
		E^{z}_\la &=& E^{\left(z\vee \norm{z}(2p-\unit)\right)\wedge\left(z\vee \norm{z}(\unit-2p)\right)}_\la 
		=\bigwedge_{\mu>\la} E^{\left(z\vee \norm{z}(2p-\unit)\right)}_\mu \vee E^{\left(z\vee \norm{z}(\unit-2p)\right)}_\mu\\
		&=& \bigwedge_{\mu>\la} (E^{z}_\mu\wedge E^{\norm{z}(2p-\unit)}_\mu) \vee (E^{z}_\mu\wedge E^{\norm{z}(\unit-2p)}_\mu)
	\end{eqnarray*}
for all $\la\in\rr$. Since $(E^{z}_\mu\wedge E^{\norm{z}(2p-\unit)}_\mu)_{\mu\in\rr}$ is an increasing net, $E^{z}_\mu\wedge E^{\norm{z}(2p-\unit)}_\mu$ commutes with $E^{z}_\nu\wedge E^{\norm{z}(2p-\unit)}_\nu$ for all $\mu,\nu\in\rr$. In the same way, we can show that $E^{z}_\mu\wedge E^{\norm{z}(\unit-2p)}_\mu$ commutes with $E^{z}_\nu\wedge E^{\norm{z}(\unit-2p)}_{\nu}$ for all $\mu,\nu\in\rr$. Moreover, if $\mu,\nu\in[-\norm{z},\norm{z})$, then $E^{z}_\mu\wedge E^{\norm{z}(2p-\unit)}_\mu$ and $E^{z}_\nu\wedge E^{\norm{z}(\unit-2p)}_\nu$ commute because they are mutually orthogonal. Therefore, 
$$
\set{E^{z}_\mu\wedge E^{\norm{z}(2p-\unit)}_\mu}{\mu\in[-\norm{z},\norm{z})}\cup\set{E^{z}_\mu\wedge E^{\norm{z}(\unit -2p)}_\mu}{\mu\in[-\norm{z},\norm{z})}
$$
is contained in an abelian \AWsa{} of \Mcal{}. If $\la\in[-\norm{z},\norm{z})$ and $I_\la=(\la,\norm{z})$, then we see from Lemma~\ref{supremum of infima} that 
	\begin{eqnarray*}
		E^{z}_\la &=& \bigwedge_{\mu>\la} (E^{z}_\mu\wedge E^{\norm{z}(2p-\unit)}_\mu) \vee (E^{z}_\mu\wedge E^{\norm{z}(\unit-2p)}_\mu)\\
		&=& \bigwedge_{\mu\in I_\la} (E^{z}_\mu\wedge E^{\norm{z}(2p-\unit)}_\mu) \vee (E^{z}_\mu\wedge E^{\norm{z}(\unit-2p)}_\mu)\\
		&=& \left(\bigwedge_{\mu\in I_\la} E^{z}_\mu\wedge E^{\norm{z}(2p-\unit)}_\mu\right) \vee \left(\bigwedge_{\mu\in I_\la} E^{z}_\mu\wedge E^{\norm{z}(\unit-2p)}_\mu\right)\\
		&=& \left[\left(\bigwedge_{\mu\in I_\la} E^{z}_\mu\right)\wedge \left(\bigwedge_{\mu\in I_\la} E^{\norm{z}(2p-\unit)}_\mu\right)\right] \vee \left[\left(\bigwedge_{\mu\in I_\la} E^{z}_\mu\right)\wedge \left(\bigwedge_{\mu\in I_\la} E^{\norm{z}(\unit-2p)}_\mu\right)\right]\\
		&=&(E^{z}_\la\wedge E^{\norm{z}(2p-\unit)}_\la) \vee (E^{z}_\la\wedge E^{\norm{z}(\unit-2p)}_\la)
		=(E^{z}_\la\wedge E^{p}_\frac{\la+\norm{z}}{2\norm{z}}) \vee (E^{z}_\la\wedge E^{\unit-p}_\frac{\la+\norm{z}}{2\norm{z}})\\
		&=&[E^{z}_\la\wedge (\unit-p)] \vee (E^{z}_\la\wedge p).
	\end{eqnarray*}
According to Proposition~\ref{characterizations of central projections}, $E^{z}_\la$ is a central projection for each $\la\in\rr$. As each projection in the spectral family of $z$ is central, $z\in\Zcal(\Mcal)$.
\end{proof}
%%%%%%%%%%%%%%%%%%%%%%%%%%%%%%%%%%%%%%%%%%%%%%%%%%%%%%%%%%%%%%%%%%%%%%%%
\section{Spectral order isomorphisms}

It was proved in \cite[Theorem~4.3]{HT17} that canonical spectral order automorphisms are characterized by preserving scalar multiples of projections. We generalize this result to spectral order isomorphisms.

\begin{theo}\label{canonical spectral order isomorphisms generalization}
	Let \Mcal{} and \Ncal{} be \AW{}s. Suppose that $\f:\Ecal(\Mcal)\to\Ecal(\Ncal)$ is a spectral order isomorphism. Then the following statements are equivalent:
	\begin{enumerate}
		\item For each $p\in P(\Mcal)$, $\f(p)\in P(\Ncal)$ and there is a strictly increasing bijection $f_p:[0,1]\to[0,1]$ such that $\f(\la p)=f_p(\la)\f(p)$ for all $\la\in[0,1]$.
		\item \f{} is canonical.
	\end{enumerate}
\end{theo}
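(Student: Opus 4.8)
The implication (ii)$\Rightarrow$(i) is a direct computation. If $\f=\Theta_\tau\circ f$ is canonical, then $f(0)=0$ and $f(1)=1$ because $f:[0,1]\to[0,1]$ is a strictly increasing bijection; hence the function calculus gives $f(\la p)=f(\la)p$ for every projection $p$ and every $\la\in[0,1]$. Passing to spectral families, one reads off $\f(p)=\Theta_\tau(p)=\unit-\tau(\unit-p)\in P(\Ncal)$ and, more generally, $\f(\la p)=\Theta_\tau(f(\la)p)=f(\la)\f(p)$. Thus (i) holds with $f_p=f$ for all $p$.

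For (i)$\Rightarrow$(ii) I first normalise. Since $\f$ is an order isomorphism of the bounded lattice $(\Ecal(\Mcal),\preceq)$ onto $(\Ecal(\Ncal),\preceq)$, it carries the least and greatest elements to the least and greatest elements, so $\f(0)=0$ and $\f(\unit)=\unit$; in particular $\f(\la\unit)=f_\unit(\la)\unit$, and I set $f:=f_\unit$. The first key step is to show that $f_p=f$ for every nonzero projection $p$, i.e. that the functions $f_p$ are independent of $p$. To this end I use that for orthogonal $p$ and $\unit-p$ Lemma~\ref{supremum of orthogonal elements} yields $\la\unit=\la p\vee\la(\unit-p)$. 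Applying $\f$, which preserves suprema, gives $f(\la)\unit=f_p(\la)\f(p)\vee f_{\unit-p}(\la)\f(\unit-p)$. Now $\f(p)\vee\f(\unit-p)=\f(\unit)=\unit$, and both projections are distinct from $0$ and $\unit$ as $\f$ is an order isomorphism; evaluating the right-hand side by Lemma~\ref{supremum of multiples of projections} produces an element of the form $\al\unit+(\beta-\al)r$ with $r\notin\{0,\unit\}$, which can be a scalar multiple of $\unit$ only when $\al=\beta$. This forces $f_p(\la)=f_{\unit-p}(\la)=f(\la)$, establishing the claim.

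With a single $f$ in hand, I define $\tau:P(\Mcal)\to P(\Ncal)$ by $\tau(p):=\unit-\f(\unit-p)$; composing the order-reversing complementations with the order isomorphism $\f|_{P(\Mcal)}$ shows $\tau$ preserves order in both directions. The plan is then to recover $\f$ on all effects from its values on scalar multiples of projections by means of the spectral-order resolution
$$
x=\bigvee_{\la\in[0,1)}\la\,(\unit-E^x_\la),
$$
which holds in $(\Ecal(\Mcal),\preceq)$ (reducing to the abelian, hence Stonean, subalgebra generated by $x$ via Proposition~\ref{independence} and Proposition~\ref{sublattices}). Applying $\f$ and using that it preserves suprema together with Step~B and the identity $\f(\unit-E^x_\la)=\unit-\tau(E^x_\la)$, I obtain $\f(x)=\bigvee_{\la}f(\la)\,(\unit-\tau(E^x_\la))$. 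Since $\{\tau(E^x_\la)\}_\la$ is an increasing chain, it lies in an abelian \AWsa{} of \Ncal{}, and the same resolution computed there shows the right-hand side has spectral family $\la\mapsto\tau(E^x_{f^{-1}(\la)})$; by Example~\ref{spectral order isomorhisms and function calculus} this is exactly the spectral family of $\Theta_\tau(f(x))$. Hence $\f(x)=\Theta_\tau(f(x))$ for every $x\in\Ecal(\Mcal)$.

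It remains to upgrade $\tau$ from an order-embedding to a genuine projection isomorphism, i.e. to prove surjectivity. Given $s\in P(\Ncal)$, surjectivity of $\f$ provides $x$ with $\f(x)=s$; from the identity just proved, $\tau(E^x_{f^{-1}(\la)})$ is constant on $[0,1)$, whence $x$ is a projection and $\tau(\unit-x)=\unit-s$, so that $\unit-s$, and therefore every projection, lies in the range of $\tau$. I expect the main obstacle to be precisely the passage from scalar multiples of projections to arbitrary effects in the third paragraph: one must justify the spectral-order resolution and, more delicately, verify that the order-embedding $\tau$ respects the infima $E^x_\la=\bigwedge_{\mu>\la}E^x_\mu$ defining a spectral family, so that the chain $\{\tau(E^x_\la)\}_\la$ is indeed the spectral family of the constructed element rather than merely dominating it. The independence step (second paragraph) is the main new idea, but the technical heart is this spectral-family identity for general $x$.
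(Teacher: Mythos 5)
Your (ii)$\Rightarrow$(i) is essentially the paper's own computation (and your formula $\Theta_\tau(p)=\unit-\tau(\unit-p)$ is the careful one, since a projection isomorphism need not commute with orthocomplementation). For (i)$\Rightarrow$(ii), however, you take a genuinely different route. The paper's proof is a three-line reduction: it sets $\tau=\f|_{P(\Mcal)}$, observes that $\Theta_{\tau}^{-1}\circ\f$ is then a spectral order automorphism of $\Ecal(\Mcal)$ preserving scalar multiples of projections, and invokes the automorphism theorem \cite[Theorem~4.3]{HT17} as a black box. You instead prove the implication from scratch: normalization, independence of $f_p$ from $p$ via $\la\unit=\la p\vee\la(\unit-p)$ together with Lemma~\ref{supremum of orthogonal elements} and Lemma~\ref{supremum of multiples of projections} (this argument is correct), the spectral resolution $x=\bigvee_{\la\in[0,1)}\la(\unit-E^x_\la)$ (also correct), and an explicit surjectivity argument for $\tau$. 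What your route buys: it is self-contained, in effect reproves the cited automorphism theorem, and it supplies the surjectivity of the induced projection map --- a point the paper's proof passes over silently when it asserts that $\f|_{P(\Mcal)}$ is a projection isomorphism. What the paper's route buys: brevity.

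The step you leave open --- that the order-embedding $\tau(p)=\unit-\f(\unit-p)$ respects the infima $E^x_\nu=\bigwedge_{\la>\nu}E^x_\la$, so that $\la\mapsto\tau(E^x_\la)$ is a genuine spectral family rather than merely dominated by one --- is indeed the crux, but it closes with tools already in the paper, so your gap is fillable rather than fatal. Both $(\Ecal(\Mcal),\preceq)$ and $(\Ecal(\Ncal),\preceq)$ are complete lattices, and a surjective order isomorphism between complete lattices preserves arbitrary suprema and infima; hence so does $\f$. Moreover, by Proposition~\ref{sublattices}(iii) the supremum or infimum of any family of projections computed in $(\Ecal(\cdot),\preceq)$ is again a projection, and it is then automatically the supremum/infimum of that family in the projection lattice, because every projection bound is in particular an $\Ecal$-bound. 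Consequently, for any family $(p_i)$ in $P(\Mcal)$,
$$
\tau\left(\bigwedge_{i} p_i\right)
=\unit-\f\left(\bigvee_{i}(\unit-p_i)\right)
=\unit-\bigvee_{i}\f(\unit-p_i)
=\bigwedge_{i}\tau(p_i),
$$
where condition (i) guarantees that each $\f(\unit-p_i)$ lies in $P(\Ncal)$, so the middle supremum may be read in $P(\Ncal)$. Applying this to the decreasing family $(E^x_\la)_{\la>\nu}$ gives $\bigwedge_{\la>\nu}\tau(E^x_\la)=\tau(E^x_\nu)$, which is exactly the right-continuity you need; with it, your computation of the spectral family of $\bigvee_{\la}f(\la)(\unit-\tau(E^x_\la))$ goes through and yields $\f(x)=\Theta_\tau(f(x))$, after which your surjectivity argument for $\tau$ is valid and the proof is complete.
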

\begin{proof}
	$(i)\Rightarrow (ii)$. Define the map $\tau: P(\Mcal)\to P(\Ncal)$ by $\tau(p)=\f(p)$. The map $\tau$ is a projection isomorphism. Thus $\Theta_{\tau}^{-1}\circ\f:\Ecal(\Mcal)\to\Ecal(\Mcal)$ is a spectral order isomorphism. By \cite[Theorem~4.3]{HT17}, $\Theta_{\tau}^{-1}\circ\f$ is canonical. Therefore, $\f$ is canonical.
	
	$(ii)\Rightarrow (i)$. By the assumption, there are a strictly increasing bijection $f:[0,1]\to[0,1]$ and a projection isomorphism $\tau: P(\Mcal)\to P(\Ncal)$ such that $\f(x)=\Theta_\tau(f(x))$ for all $x\in\Ecal(\Mcal)$. A direct computation shows that 
	$$
	\f(\la p)=\Theta_{\tau}(f(\la)p)=f(\la)\Theta_{\tau}(p)=f(\la)\f(p)
	$$
	for all $\la\in[0,1]$.
\end{proof}
 
Using the previous theorem, we obtain in Corollary~\ref{spectral order isomorphisms are canonical} that every spectral order isomorphism between sets of all effects in \AWf{}s of Type I is canonical. This statement is a slight generalization of known results concerning spectral order automorphisms \cite{MN16,MS07}. In contrast with the form of automorphisms in \cite{MN16,MS07}, it shows that the main structure of a spectral order isomorphism between sets of all effects does not depend on a particular choice of \AWf{}s of Type~I. We give the proof of Corollary~\ref{spectral order isomorphisms are canonical} for completeness of discussion. Note that an advantage of our approach is that we need not discuss the case of \AWf{} of Type I$_2$ separately.

\begin{cor}\label{spectral order isomorphisms are canonical}
	If \Mcal{} and \Ncal{} are \AWf{}s of Type $I$, then every spectral order isomorphism $\f:\Ecal(\Mcal)\to\Ecal(\Ncal)$ is canonical.
\end{cor}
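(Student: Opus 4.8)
The plan is to verify condition (i) of Theorem~\ref{canonical spectral order isomorphisms generalization} for \f{}; canonicity then follows immediately from that theorem. Throughout I use that an \AWf{} of Type I is the \vNa{} $B(\Hcal)$ of all bounded operators on a Hilbert space, so I may write $\Mcal=B(\Hcal)$ and $\Ncal=B(\Kcal)$, in which the atomic projections $P_{at}(\Mcal)$ are exactly the rank-one projections. The verification splits naturally into two halves: first that $\f(p)\in P(\Ncal)$ for every $p\in P(\Mcal)$, and then that \f{} carries each chain $\{\la p:\la\in[0,1]\}$ onto a chain of scalar multiples of $\f(p)$.

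First I would treat atomic projections. By Proposition~\ref{characterization of atomic projections on effects} the set $\{\la e:e\in P_{at}(\Mcal),\ \la\in(0,1]\}$ is characterized purely by the spectral order, so \f{} maps it bijectively onto the analogous set in \Ncal{}; thus for atomic $e$ and $\la\in(0,1]$ we have $\f(\la e)=\mu e'$ with $e'$ atomic. Fixing $e$ and varying $\la$, I would show the target atom $e'$ is independent of $\la$: if $\la_1<\la_2$ then $\la_1 e\preceq\la_2 e$ by Lemma~\ref{basic properties of specral order}(iv), hence by Lemma~\ref{basic properties of specral order}(iii) the images satisfy a L\"owner inequality between two positive scalar multiples of rank-one projections, and such an inequality forces the two atomic projections to coincide. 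Therefore $\f(\la e)=g_e(\la)e'$ for a strictly increasing function $g_e$ on $(0,1]$, bijective by applying the same reasoning to $\f^{-1}$; since an order isomorphism of $(0,1]$ preserves the greatest element, $g_e(1)=1$, i.e.\ $\f(e)=e'\in P_{at}(\Ncal)$. For a general $p\in P(\Mcal)$ I would use that $p=\bigvee\{e\in P_{at}(\Mcal):e\leq p\}$ in $(\Ecal(\Mcal),\preceq)$, the projection-lattice supremum agreeing with the spectral-order supremum by Proposition~\ref{sublattices}(iii). As an order isomorphism preserves suprema, $\f(p)=\bigvee\{\f(e):e\leq p\}$ is a supremum of atomic projections of \Ncal{}, hence a projection again by Proposition~\ref{sublattices}(iii). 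This establishes the first half of (i).

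For the homogeneity on each chain I would set $q=\f(p)\in P(\Ncal)$ and pass to corners. Using Lemma~\ref{basic properties of specral order}(iii) together with the elementary fact that every effect is $\preceq$ the unit, one checks $\{y\in\Ecal(\Mcal):y\preceq p\}=\Ecal(p\Mcal p)$ and likewise $\{y'\in\Ecal(\Ncal):y'\preceq q\}=\Ecal(q\Ncal q)$, where the proposition relating the spectral order in an \AWsa{} to that in the ambient algebra guarantees that the order inside each corner is the restriction of the ambient one. Since $\f(0)=0$ and $\f(p)=q$, the order isomorphism \f{} restricts to a spectral order isomorphism $\Ecal(p\Mcal p)\to\Ecal(q\Ncal q)$. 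Both corners $p\Mcal p=B(p\Hcal)$ and $q\Ncal q=B(q\Kcal)$ are again \AWf{}s of Type I, so their centres are one-dimensional and Proposition~\ref{central operators}(iii) identifies $\Dcal(\Ecal(p\Mcal p),\preceq)=\{\la p:\la\in[0,1]\}$ and $\Dcal(\Ecal(q\Ncal q),\preceq)=\{\mu q:\mu\in[0,1]\}$. Distributivity being an order-theoretic property preserved by order isomorphisms, \f{} maps the first chain onto the second; restricting to it yields a strictly increasing bijection $f_p:[0,1]\to[0,1]$ with $\f(\la p)=f_p(\la)q=f_p(\la)\f(p)$. This completes condition (i).

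The step I expect to be the main obstacle is the first half, that \f{} preserves projections. Scalar multiples of atomic projections are order-characterizable, but general projections are not, so one cannot argue directly; the delicate points are the independence of the target atom $e'$ from $\la$ and the passage from atoms to an arbitrary projection via supremum-preservation. By contrast, once projections are known to be preserved, the corner-and-distributivity argument delivers the required homogeneity cleanly and, notably, uniformly in the rank of the corner, so that the Type I$_2$ case requires no separate treatment.
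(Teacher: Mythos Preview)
Your proof is correct, and the second half takes a genuinely different route from the paper's argument.

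Both proofs begin the same way: Proposition~\ref{characterization of atomic projections on effects} identifies the nonzero scalar multiples of atoms order-theoretically, and one then checks that for each atomic $e$ the target atom $e'$ is independent of $\la$. Your method for this independence (pushing $\la_1 e\preceq\la_2 e$ through $\f$ and then to a L\"owner inequality between rank-one projections) is a slight variant of the paper's, which instead argues by contradiction via suprema and Proposition~\ref{characterization of atomic projections on effects}; both are fine. Both proofs then obtain $\f(p)\in P(\Ncal)$ for general $p$ by writing $p$ as a supremum of atoms and invoking Proposition~\ref{sublattices}.

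The real divergence is in establishing the homogeneity $\f(\la p)=f_p(\la)\f(p)$. The paper first proves, using Lemma~\ref{supremum of multiples of projections} and a ``third atom'' $e\leq p\vee q$, that the functions $f_e$ attached to different atoms in fact coincide, obtaining a single $f$ and then extending to arbitrary projections via suprema. You bypass this entirely: once $\f(p)=q$ is known to be a projection, you identify the order ideal $\{y:y\preceq p\}$ with $\Ecal(p\Mcal p)$, restrict $\f$ to a spectral order isomorphism between the corner effect algebras, and read off $f_p$ from Proposition~\ref{central operators}(iii), since the distributive elements of $\Ecal(p\Mcal p)$ are exactly the scalar multiples of $p$. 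This is clean and uses only the general machinery already developed; in particular it never touches Lemma~\ref{supremum of multiples of projections} or the third-atom trick, and it explains transparently why no separate Type~I$_2$ analysis is needed. What the paper's route buys in exchange is the slightly stronger intermediate conclusion that the $f_p$ are all equal---but Theorem~\ref{canonical spectral order isomorphisms generalization}(i) does not require this, and canonicity recovers it a posteriori anyway.
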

\begin{proof}
It follows from Proposition~\ref{characterization of atomic projections on effects} that if $p\in P_{at}(\Mcal)$ and $\la\in(0,1]$, then there are $q\in P_{at}(\Ncal)$ and $\mu\in(0,1]$ such that $\f(\la p)=\mu q$. Take $\la_1,\la_2\in (0,1]$ and $p\in P_{at}(\Mcal)$. Then $\f(\la_1 p)=\mu_1 q_1$ and $\f(\la_2 p)=\mu_2 q_2$ for some $\mu_1,\mu_2\in(0,1]$ and $q_1,q_2\in P_{at}(\Ncal)$. Let us show that $q_1=q_2$. It is not difficult to verify that $\la_1 p\vee \la_2 p=\max\{\la_1,\la_2\}p$. Thus 
$$
\f(\max\{\la_1,\la_2\}p)=\f(\la_1 p\vee \la_2 p)=\mu_1 q_1\vee \mu_2 q_2. 
$$
Suppose that $q_1\neq q_2$. If $\mu_1q_1\preceq \mu_2q_2$, then $E^{\mu_2q_2}_{\nu}\leq E^{\mu_1q_1}_{\nu}$ for every $\nu\in\rr$. In particular, $E^{\mu_2q_2}_{0}\leq E^{\mu_1q_1}_{0}$ and so $q_1\leq q_2$. This is a contradiction because $q_1$ and $q_2$ are two distinct atoms. Hence $\mu_1q_1\not\preceq \mu_2q_2$. Interchanging the roles of $\mu_1q_1$ and $\mu_2q_2$, we see that $\mu_2q_2\not\preceq \mu_1q_1$. According to Proposition~\ref{characterization of atomic projections on effects}, there are no $\mu\in(0,1]$ and $e\in P_{at}(\Ncal)$ such that $\f(\max\{\la_1,\la_2\}p)=\mu e$ which is a contradiction. Consequently, $q_1=q_2$. As $\f^{-1}$ is also a spectral order isomorphism, there are a strictly increasing bijection $f_p:[0,1]\to[0,1]$ and $q\in P_{at}(\Ncal)$ such that $\f(\la p)=f_p(\la)q$ for all $\la\in[0,1]$. 
	
If $p,q\in P_{at}(\Mcal)$ are different and $\la\in(0,1]$, then there is $e\in P_{at}(\Mcal)$ different from $p$ and $q$ such that $e\leq p\vee q$. Multiplication by $\la$ is a spectral order automorphism of $\Mcal_+$ which maps $\Ecal(\Mcal)$ into itself. This implies that
$$
\f(\la p\vee\la q)=\f(\la p\vee\la e)=\f(\la e\vee\la q).
$$
Hence
$$
f_p(\la)\f(p)\vee f_q(\la)\f(q)=f_p(\la)\f(p)\vee f_e(\la)\f(e)=f_e(\la)\f(e)\vee f_q(\la)\f(q).
$$
Since the spectra of these three operators have to coincide, we see from Lemma~\ref{supremum of multiples of projections} that $f_p(\la)=f_e(\la)=f_q(\la)$. Therefore, there is a strictly increasing function $f:[0,1]\to[0,1]$ such that $\f(\la p)=f(\la)\f(p)$ for all $p\in P_{at}(\Mcal)$ and $\la\in[0,1]$. As every projection in \Mcal{} is the supremum of a set of atomic projections in \Mcal{}, $\f(\la p)=f(\la)\f(p)$ for all $p\in P(\Mcal)$ and $\la\in[0,1]$. According to Theorem~\ref{canonical spectral order isomorphisms generalization}, \f{} is canonical.
\end{proof}

Now we would like to extend the preceding corollary to positive operators and also to self-adjoint operators. We start with the case of positive operators. For this, we need the following lemmas.

\begin{lem}\label{Image of the set of all effects}
	Let \Mcal{} and \Ncal{} be \AW{}s and let $\al\in[1,\infty)$. If $f:[0,\infty)\to[0,\infty)$ is a strictly increasing bijection and $\f:\Mcal_+\to\Ncal_+$ is a spectral order isomorphism such that $\f(\la\unit_\Mcal)=f(\la) \unit_\Ncal$ for all $\la\in[0,\infty)$, then 
	$$
	\frac{1}{\al}\f(f^{-1}(\al \Ecal(\Mcal)))=\Ecal(\Ncal).
	$$
\end{lem}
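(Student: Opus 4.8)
The plan is to realize the composite $\Psi:=\tfrac{1}{\al}\circ\f\circ f^{-1}$ as a single spectral order isomorphism from $\Mcal_+$ onto $\Ncal_+$, and then to argue that it carries the order-interval defining $\al\Ecal(\Mcal)$ onto the one defining $\Ecal(\Ncal)$. The whole statement should then collapse to the single computation $\Psi(\al\unit_\Mcal)=\unit_\Ncal$.

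First I would record the key identification
$$
\al\Ecal(\Mcal)=\set{x\in\Mcal_+}{x\preceq\al\unit_\Mcal}.
$$
This holds because $\al\unit_\Mcal$ is scalar, hence central, so by Lemma~\ref{basic properties of specral order}(iv) the relation $x\preceq\al\unit_\Mcal$ is, for positive $x$, equivalent to the L\"{o}wner inequality $x\leq\al\unit_\Mcal$, i.e. to $0\leq x\leq\al\unit_\Mcal$. The identical reasoning gives $\Ecal(\Ncal)=\set{z\in\Ncal_+}{z\preceq\unit_\Ncal}$.

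Next I would verify that each of the three maps composing $\Psi$ is a spectral order isomorphism of the relevant positive cone: the function calculus induced by the strictly increasing bijection $f^{-1}:[0,\infty)\to[0,\infty)$ is a spectral order automorphism of $\Mcal_+$ by Example~\ref{spectral order isomorhisms and function calculus}, $\f$ is a spectral order isomorphism by hypothesis, and multiplication by $\tfrac{1}{\al}$ is a spectral order automorphism of $\Ncal_+$ by Lemma~\ref{basic properties of specral order}(i). Hence $\Psi$ is a spectral order isomorphism from $\Mcal_+$ onto $\Ncal_+$; being a bijection with $u\preceq v\iff\Psi(u)\preceq\Psi(v)$, it maps every down-set $\set{x\in\Mcal_+}{x\preceq a}$ bijectively onto $\set{z\in\Ncal_+}{z\preceq\Psi(a)}$. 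Applying this to $a=\al\unit_\Mcal$ is exactly what the two displayed identifications will need.

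The computation that glues everything together is $\Psi(\al\unit_\Mcal)=\unit_\Ncal$: the function calculus gives $f^{-1}(\al\unit_\Mcal)=f^{-1}(\al)\unit_\Mcal$, the hypothesis on $\f$ gives $\f(f^{-1}(\al)\unit_\Mcal)=f(f^{-1}(\al))\unit_\Ncal=\al\unit_\Ncal$, and scaling by $\tfrac{1}{\al}$ returns $\unit_\Ncal$. Combining this with the two identifications yields
$$
\tfrac{1}{\al}\f(f^{-1}(\al\Ecal(\Mcal)))=\Psi\bigl(\set{x\in\Mcal_+}{x\preceq\al\unit_\Mcal}\bigr)=\set{z\in\Ncal_+}{z\preceq\unit_\Ncal}=\Ecal(\Ncal),
$$
which is the claim. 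I expect the only genuinely delicate point to be the first step: justifying the passage between the spectral order and the L\"{o}wner order against the central element $\al\unit_\Mcal$, and keeping straight that $f^{-1}$ and the scalar factors act on spectra compatibly with $\preceq$. Everything after that is a formal manipulation of down-sets under an order isomorphism.
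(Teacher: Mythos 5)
Your proposal is correct and takes essentially the same route as the paper: both decompose the map into the scaling by $\al$, the function calculus of $f^{-1}$, the isomorphism $\f$, and the scaling by $\tfrac{1}{\al}$, reduce membership in the effect sets to spectral-order comparison against scalar multiples of the unit via Lemma~\ref{basic properties of specral order}, and hinge on the single computation $\tfrac{1}{\al}\f(f^{-1}(\al\unit_\Mcal))=\unit_\Ncal$. The paper merely phrases your down-set argument as two explicit inclusions, exhibiting $y\mapsto\tfrac{1}{\al}f(\f^{-1}(\al y))$ as the inverse to get surjectivity, which is exactly your $\Psi^{-1}$.
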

\begin{proof}
	Let $x\in\Ecal(\Mcal)$. Lemma~\ref{basic properties of specral order} establishes that $0\preceq x\preceq \unit_\Mcal$. Since multiplication by a positive number is a spectral order isomorphism, we have 
	$$
	0\preceq \frac{1}{\al}\f(f^{-1}(\al x))\preceq \frac{1}{\al}\f(f^{-1}(\al \unit_\Mcal{})) 
				= \unit_\Ncal.
	$$
	By Lemma~\ref{basic properties of specral order}, $\frac{1}{\al}\f(f^{-1}(\al x))\in\Ecal(\Ncal)$.
	
	Now take $y\in\Ecal(\Ncal)$. Consider $x=\frac{1}{\al}f(\f^{-1}(\al y))$. Similar arguments as above shows that $x\in\Ecal(\Mcal)$. Moreover, $\frac{1}{\al}\f(f^{-1}(\al x))=y$.
\end{proof}

\begin{lem}\label{Extension from effects to positive operators}
	Let $\f:\Mcal_{+}\to\Ncal_+$ be a spectral order isomorphism between positive parts of \AW{}s \Mcal{} and \Ncal{} such that there exists a strictly increasing bijection $f:[0,\infty)\to[0,\infty)$ satisfying $\f(\la\unit_\Mcal)=f(\la)\unit_\Ncal$ for all $\la\in[0,\infty)$.
	Suppose that, for each $\al\in[1,\infty)$, the equation 
	$$
	\f_\al(x)=\frac{1}{\al}\f(f^{-1}(\al x))
	$$
	defines a canonical spectral order isomorphism $\f_\al:\Ecal(\Mcal)\to\Ecal(\Ncal)$. 
	\begin{enumerate}
		\item There exists a projection isomorphism $\tau:P(\Mcal)\to P(\Ncal)$ such that $\f(x)=\Theta_\tau(f(x))$ for all $x\in\Mcal_+$. 
		\item If, in addition, there is a \Jsi{} $\psi:\Mcal \to\Ncal$ such that $\psi(x)=\f(f^{-1}(x))$ for all $x\in\Ecal(\Mcal)$, then $\f(x)=\psi(f(x))$ for all $x\in\Mcal_+$.
	\end{enumerate}
\end{lem}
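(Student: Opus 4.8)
The plan is to use the hypothesis that every $\f_\al$ is canonical to strip off the function-calculus part first, then to show that the projection isomorphism attached to $\f_\al$ does not depend on $\al$, and finally to recover $\f$ on all of $\Mcal_+$ by an appropriate rescaling. Throughout I will use the elementary fact, immediate from Lemma~\ref{rescaling and translation}(i), that $\Theta_\tau$ is positively homogeneous, i.e.\ $\Theta_\tau(cz)=c\,\Theta_\tau(z)$ for every $c>0$ and every self-adjoint $z$; indeed $E^{\Theta_\tau(cz)}_\la=\tau(E^{cz}_\la)=\tau(E^{z}_{\la/c})=E^{\Theta_\tau(z)}_{\la/c}=E^{c\Theta_\tau(z)}_\la$.

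First I would fix $\al\in[1,\infty)$ and write the given canonical form $\f_\al(x)=\Theta_{\tau_\al}(f_\al(x))$, where $f_\al:[0,1]\to[0,1]$ is a strictly increasing bijection and $\tau_\al:P(\Mcal)\to P(\Ncal)$ is a projection isomorphism. Evaluating the defining formula on scalars and using $\f(\mu\unit_\Mcal)=f(\mu)\unit_\Ncal$ gives $\f_\al(\la\unit_\Mcal)=\frac1\al f(f^{-1}(\al\la))\unit_\Ncal=\la\unit_\Ncal$ for $\la\in[0,1]$. On the other hand the canonical form gives $\f_\al(\la\unit_\Mcal)=f_\al(\la)\unit_\Ncal$ (since $\Theta_{\tau_\al}(\unit_\Ncal)=\unit_\Ncal$ and $\Theta_{\tau_\al}$ is positively homogeneous). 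Comparing the two shows $f_\al=\mathrm{id}$, so that $\f_\al=\Theta_{\tau_\al}$ on $\Ecal(\Mcal)$.

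The heart of the argument is to prove that $\tau_\al$ is independent of $\al$, and this is the step I expect to be the main obstacle. Given $1\le\al\le\beta$ and $x\in\Ecal(\Mcal)$, note that $\frac\al\beta x\in\Ecal(\Mcal)$ and that both $\al\f_\al(x)$ and $\beta\f_\beta(\frac\al\beta x)$ equal $\f(f^{-1}(\al x))$; hence $\f_\al(x)=\frac\beta\al\,\f_\beta(\frac\al\beta x)$. Substituting $\f_\al=\Theta_{\tau_\al}$ and $\f_\beta=\Theta_{\tau_\beta}$ and invoking positive homogeneity collapses this to $\Theta_{\tau_\al}(x)=\Theta_{\tau_\beta}(x)$ for all $x\in\Ecal(\Mcal)$. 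Since a map of the form $\Theta_\tau$ recovers its projection isomorphism through spectral families (for instance $\tau(p)=E^{\Theta_\tau(\unit_\Mcal-p)}_0$, because $E^{\unit_\Mcal-p}_0=p$), this forces $\tau_\al=\tau_\beta$. Writing $\tau$ for this common value, I obtain $\frac1\al\f(f^{-1}(\al x))=\Theta_\tau(x)$ for all $\al\ge1$ and all $x\in\Ecal(\Mcal)$.

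Finally I would extend from effects to $\Mcal_+$. Given $y\in\Mcal_+$, choose $\al\ge\max\{1,f(\norm{y})\}$ and put $x=\frac1\al f(y)\in\Ecal(\Mcal)$; then $f^{-1}(\al x)=y$, and positive homogeneity yields $\f(y)=\al\,\Theta_\tau(\frac1\al f(y))=\Theta_\tau(f(y))$, which is assertion (i). For (ii), specializing (i) to $y=f^{-1}(x)$ with $x\in\Ecal(\Mcal)$ gives $\psi(x)=\f(f^{-1}(x))=\Theta_\tau(x)$ on $\Ecal(\Mcal)$; since $\psi$ is linear and $\Theta_\tau$ is positively homogeneous, rescaling each $z\in\Mcal_+$ into $\Ecal(\Mcal)$ upgrades this to $\psi=\Theta_\tau$ on all of $\Mcal_+$. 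As $f(x)\in\Mcal_+$ for $x\in\Mcal_+$, assertion (i) then gives $\f(x)=\Theta_\tau(f(x))=\psi(f(x))$, completing (ii).
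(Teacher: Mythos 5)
Your proof is correct and takes essentially the same approach as the paper's: evaluate on scalar multiples of the unit to get $f_\al=\mathrm{id}$, use positive homogeneity of $\Theta_\tau$ to show the projection isomorphisms attached to the various $\f_\al$ give the same map (the paper compares each $\f_\al$ with $\f_1$ rather than general pairs $\al\le\beta$, and never bothers to recover $\tau$ itself from $\Theta_\tau$, but these are cosmetic differences), then rescale to pass from $\Ecal(\Mcal)$ to $\Mcal_+$. Part (ii) is handled identically in both arguments, by noting that $\psi$ and $\Theta_\tau$ agree on $\Ecal(\Mcal)$ and both are positively homogeneous.
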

\begin{proof}
	\begin{enumerate}
		\item Maps $\f_\al$ are canonical spectral order isomorphisms and so, for each $\al\in[1,\infty)$, there are a strictly increasing bijection $g_\al:[0,1]\to[0,1]$ and a projection isomorphism $\tau_\al: P(\Mcal)\to P(\Ncal)$ such that $\f_\al(x)=\Theta_{\tau_\al}(g_\al(x))$ whenever $x\in \Ecal(\Mcal)$.
		
		Let $\al\in[1,\infty)$ and $\la\in[0,1]$ be arbitrary. Then 
		$$
		\al g_\al(\la)\unit_\Ncal=\al\f_\al(\la\unit_\Mcal)=\f(f^{-1}(\al\la\unit_\Mcal))=\al\la\unit_\Ncal.
		$$
		Thus $g_\al(\la)=\la$ and so $\f_\al(x)=\Theta_{\tau_\al}(x)$ for all $x\in\Ecal(\Mcal)$. Set $\tau=\tau_1$. If $0\leq x\leq \frac{1}{\al}\unit_\Mcal$, then
		$$
		\Theta_{\tau_\al}(\al x)=\al\Theta_{\tau_\al}(x)=\al\f_\al(x)=\f(f^{-1}(\al x))=\Theta_\tau(\al x).
		$$
		Consequently, $\Theta_{\tau_\al}=\Theta_\tau$ on $\Mcal_+$.
		
		If $y\in\Mcal_+$, then there are $x\in\Ecal(\Mcal)$ and $\beta\in[1,\infty)$ such that $y=\beta x$. Hence
		$$
		\f(f^{-1}(y))=\f(f^{-1}(\beta x))=\beta\Theta_{\tau_\beta}(x)=\beta\Theta_{\tau}(x)=\Theta_\tau(y).
		$$
		
		\item According to the proof of (i), $\Theta_\tau(x)=\psi(x)$ for all $x\in\Ecal(\Mcal)$. Hence $\Theta_\tau$ and $\psi$ coincide on $\Mcal_+$.
	\end{enumerate}
\end{proof}

\begin{theo}\label{spectral order isomorphisms are canonical II}
	If \Mcal{} and \Ncal{} are \AWf{}s of Type $I$, then every spectral order isomorphism $\f:\Mcal_+\to\Ncal_+$ is canonical.
\end{theo}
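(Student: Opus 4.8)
The plan is to reduce the assertion to the case of effects, which is already handled by Corollary~\ref{spectral order isomorphisms are canonical}, by checking the hypotheses of Lemma~\ref{Extension from effects to positive operators}. Thus I would first produce a strictly increasing bijection $f:[0,\infty)\to[0,\infty)$ with $\f(\la\unit_\Mcal)=f(\la)\unit_\Ncal$ for all $\la\in[0,\infty)$, and then verify that for every $\al\in[1,\infty)$ the formula $\f_\al(x)=\frac{1}{\al}\f(f^{-1}(\al x))$ defines a \emph{canonical} spectral order isomorphism $\f_\al:\Ecal(\Mcal)\to\Ecal(\Ncal)$. Once both are in place, Lemma~\ref{Extension from effects to positive operators}(i) delivers the projection isomorphism $\tau$ and the identity $\f(x)=\Theta_\tau(f(x))$ on $\Mcal_+$, which is exactly the claim that \f{} is canonical.

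To obtain $f$, I would recognize the scalar multiples of the unit order-theoretically. Because \Mcal{} and \Ncal{} are factors, $\Zcal(\Mcal)_+=\set{\la\unit_\Mcal}{\la\in[0,\infty)}$ and similarly for \Ncal{}; by Proposition~\ref{central operators}(ii) these are precisely the distributive elements of $(\Mcal_+,\preceq)$ and $(\Ncal_+,\preceq)$. Since \f{} is an order isomorphism of conditionally complete lattices, it preserves all joins and meets, and hence carries distributive elements onto distributive elements. Therefore \f{} restricts to an order isomorphism of $\set{\la\unit_\Mcal}{\la\geq 0}$ onto $\set{\mu\unit_\Ncal}{\mu\geq 0}$; as $\preceq$ agrees with the usual order of $[0,\infty)$ on scalar multiples of the unit by Lemma~\ref{basic properties of specral order}(iv), this restriction is given by a strictly increasing bijection $f:[0,\infty)\to[0,\infty)$ with $\f(\la\unit_\Mcal)=f(\la)\unit_\Ncal$.

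For the second point, I would fix $\al\in[1,\infty)$ and note that $\f_\al$ is the composition of multiplication by $\al$, the continuous function calculus induced by the strictly increasing bijection $f^{-1}$, the isomorphism \f{}, and multiplication by $\frac{1}{\al}$, each of which is a spectral order isomorphism on positive parts (Example~\ref{spectral order isomorhisms and function calculus} and Lemma~\ref{basic properties of specral order}(i)); hence $\f_\al$ is itself a spectral order isomorphism. Lemma~\ref{Image of the set of all effects} then shows that $\f_\al$ maps $\Ecal(\Mcal)$ bijectively onto $\Ecal(\Ncal)$, so $\f_\al$ is a spectral order isomorphism between the sets of effects, and Corollary~\ref{spectral order isomorphisms are canonical} makes it canonical. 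Assembling these facts through Lemma~\ref{Extension from effects to positive operators}(i) finishes the proof. I expect the only genuinely substantive step to be the order-theoretic identification of the scalars in the second paragraph; everything after that is routine, being funneled through the two preparatory lemmas and the effects corollary.
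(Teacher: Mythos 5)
Your proposal is correct and takes essentially the same route as the paper's own proof: identify the scalar multiples of the unit via Proposition~\ref{central operators} (the paper leaves implicit the argument that order isomorphisms preserve distributive elements, which you spell out), then use Lemma~\ref{Image of the set of all effects} together with Corollary~\ref{spectral order isomorphisms are canonical} to see that each $\f_\al$ is a canonical spectral order isomorphism of effects, and finish with Lemma~\ref{Extension from effects to positive operators}(i). The extra details you supply (triviality of the center in a factor, the composition argument showing $\f_\al$ is a spectral order isomorphism) are exactly the steps the paper compresses into single sentences.
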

\begin{proof}
	By Proposition~\ref{central operators}, \f{} preserves central positive operators. Thus there is a strictly increasing bijection $f:[0,\infty)\to[0,\infty)$ such that $\f(\la\unit)=f(\la)\unit$ for every $\la\in[0,\infty)$. Let $\al\in[1,\infty)$. Set $\f_\al(x)=\frac{1}{\al}\f(f^{-1}(\al x))$ for every $x\in\Ecal(\Mcal)$. It follows from Lemma~\ref{Image of the set of all effects} and Corollary~\ref{spectral order isomorphisms are canonical} that $\f_\al$ is a canonical spectral order isomorphism from $\Ecal(\Mcal)$ onto $\Ecal(\Ncal)$. Therefore, we obtain from Lemma~\ref{Extension from effects to positive operators} that \f{} is canonical.
\end{proof}

\begin{lem}\label{Extension from positive to self-adjoint operators}
	Let $\f:\Mcal_{sa}\to\Mcal_{sa}$ be a spectral order automorphism of the self-adjoint part of an \AW{} \Mcal{} such that 
	\begin{enumerate}
		\item $\f(\la\unit)=\la\unit$ for all $\la\in\rr$;
		\item $\f(x)=x$ for all $x\in\Mcal_+$.
	\end{enumerate}
	If, for each $\al\in(0,\infty)$, the equation 
	$$
	\f_\al(x)=\f(x-\al\unit)+\al\unit
	$$
	defines a canonical spectral order automorphism $\f_\al:\Mcal_+\to\Mcal_+$, then $\f(x)=x$ for every $x\in\Mcal_{sa}$.
\end{lem}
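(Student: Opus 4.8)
The plan is to prove that for each $\al\in(0,\infty)$ the map $\f_\al$ is in fact the identity on $\Mcal_+$, and then to let $\al$ vary. Once $\f_\al=\mathrm{id}$ on $\Mcal_+$ has been established, the defining relation $\f_\al(x)=\f(x-\al\unit)+\al\unit$ gives $\f(x-\al\unit)=x-\al\unit$ for every $x\in\Mcal_+$; that is, $\f(y)=y$ for every $y\in\Mcal_{sa}$ with $y\geq-\al\unit$. Since an arbitrary $y\in\Mcal_{sa}$ satisfies $y\geq-\al\unit$ as soon as $\al>\norm{y}$, this yields $\f(y)=y$ on all of $\Mcal_{sa}$.

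So fix $\al\in(0,\infty)$. By hypothesis $\f_\al$ is a canonical spectral order automorphism of $\Mcal_+$, so there are a strictly increasing bijection $f_\al:[0,\infty)\to[0,\infty)$ and a projection isomorphism $\tau_\al:P(\Mcal)\to P(\Mcal)$ with $\f_\al(x)=\Theta_{\tau_\al}(f_\al(x))$ for all $x\in\Mcal_+$. I would first pin down $f_\al$ using the central scalars. Condition (i) together with the defining relation gives $\f_\al(\la\unit)=\f((\la-\al)\unit)+\al\unit=\la\unit$ for every $\la\geq0$. On the other hand, applying the canonical form and using that $\tau_\al$ preserves $0$ and $\unit$, one computes $\f_\al(\la\unit)=f_\al(\la)\unit$. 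Comparing the two expressions forces $f_\al(\la)=\la$ for all $\la\geq0$, hence $f_\al=\mathrm{id}$ and $\f_\al=\Theta_{\tau_\al}$ on $\Mcal_+$.

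Next I would determine $\tau_\al$ from condition (ii). If $x\in\Mcal_+$ satisfies $x\geq\al\unit$, then $x-\al\unit\in\Mcal_+$, so condition (ii) gives $\f(x-\al\unit)=x-\al\unit$ and hence $\f_\al(x)=x$. I would apply this to the operators $x=\al\unit+\beta p$ with $\beta>0$ and $p\in P(\Mcal)$, all of which dominate $\al\unit$. A direct spectral-family computation (using Lemma~\ref{rescaling and translation}) shows that the spectral family of $\al\unit+\beta p$ equals $0$ below $\al$, equals $\unit-p$ on $[\al,\al+\beta)$, and equals $\unit$ from $\al+\beta$ on; applying $\tau_\al$ level by level identifies $\Theta_{\tau_\al}(\al\unit+\beta p)$ as the operator $\al\,\tau_\al(\unit-p)+(\al+\beta)\bigl(\unit-\tau_\al(\unit-p)\bigr)$. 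Equating this with $\al\unit+\beta p=\al(\unit-p)+(\al+\beta)p$ and comparing the eigenprojections belonging to the distinct eigenvalues $\al$ and $\al+\beta$ yields $\tau_\al(\unit-p)=\unit-p$. Since $p\mapsto\unit-p$ is a bijection of $P(\Mcal)$, this gives $\tau_\al=\mathrm{id}$, whence $\f_\al=\Theta_{\mathrm{id}}=\mathrm{id}$ on $\Mcal_+$, completing the argument as outlined in the first paragraph.

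The routine parts are the spectral-family bookkeeping; the point requiring care is that condition (ii) supplies the value of $\f_\al$ only on the shifted cone $\{x\geq\al\unit\}$, which is far from all of $\Mcal_+$. The main step is therefore to leverage this partial information through the canonical form: the central scalars fix the functional part $f_\al$, and the two-valued operators $\al\unit+\beta p$ fix the lattice part $\tau_\al$, and only together do they force $\f_\al$ to be the identity on the entire cone $\Mcal_+$ rather than merely on $\{x\geq\al\unit\}$.
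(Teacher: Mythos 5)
Your proof is correct, but it resolves the crucial step by a genuinely different mechanism than the paper. Both arguments share the same opening moves: condition (i) forces the functional part of the canonical automorphism $\f_\al$ to be the identity, so $\f_\al=\Theta_{\tau_\al}$ for some projection isomorphism $\tau_\al$, and condition (ii) shows $\f_\al$ fixes every $x\in\Mcal_+$ with $x\geq\al\unit$. From there the paper never identifies $\tau_\al$: given $x\in\Mcal_{sa}$, it chooses $\al$ so that $x+\al\unit$ is positive and \emph{invertible}, hence $\beta\unit\leq x+\al\unit$ for some $\beta>0$, so that the positive multiple $\frac{\al}{\beta}(x+\al\unit)$ dominates $\al\unit$ and is therefore fixed by $\f_\al$; since $\Theta_{\tau_\al}$ commutes with multiplication by positive scalars (Lemma~\ref{rescaling and translation}), the fixed-point property descends from that multiple to $x+\al\unit$ itself, giving $\f(x)=x$ directly. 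You instead determine $\tau_\al$ completely: testing $\f_\al$ on the operators $\al\unit+\beta p$, which lie in the shifted cone, and comparing spectral families yields $\tau_\al(\unit-p)=\unit-p$ for every projection $p$, hence $\tau_\al=\mathrm{id}$ and $\f_\al=\mathrm{id}$ on all of $\Mcal_+$, after which unwinding the definition of $\f_\al$ for $\al>\norm{y}$ finishes. Your route buys the stronger intermediate statement that each $\f_\al$ is the identity on the whole cone, at the cost of the spectral-family bookkeeping for scalar-plus-projection operators; the paper's scaling trick is shorter precisely because homogeneity of $\Theta_{\tau_\al}$ lets it bypass any identification of $\tau_\al$. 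One cosmetic remark: at the step where you compare eigenprojections, it is cleaner to invoke uniqueness of spectral families --- equality of the two operators forces equality of their spectral families, and evaluating at any $\la\in[\al,\al+\beta)$ gives $\tau_\al(\unit-p)=\unit-p$ --- which also covers the degenerate cases $p=0$ and $p=\unit$ without separate comment.
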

\begin{proof}
	Let $x\in\Mcal_{sa}$. Then there exists $\al\in(0,\infty)$ such that $x+\al\unit$ is positive invertible element. Hence $\beta\unit\leq x+\al\unit$ for some $\beta\in(0,\infty)$ and so $\al\unit\leq \frac{\al}{\beta}(x+\al\unit)$. As $\f$ is the identity map on positive operators, we have 
	$$
	\frac{\al}{\beta}(x+\al\unit)=\f\left(\frac{\al}{\beta}(x+\al\unit)-\al\unit\right)+\al\unit=\f_\al\left(\frac{\al}{\beta}(x+\al\unit)\right).
	$$	
	Moreover, $\f_\al(x)=\Theta_{\tau_\al}(x)$ for some projection isomorphism $\tau_\al:P(\Mcal)\to P(\Mcal)$ because $\f_\al$ is canonical and $\f_\al(\la \unit)=\la \unit$ for all $\la\in[0,\infty)$. Hence 
	$$
	\frac{\al}{\beta}(x+\al\unit)=\Theta_{\tau_\al}\left(\frac{\al}{\beta}(x+\al\unit)\right)=\frac{\al}{\beta}\Theta_{\tau_\al}\left(x+\al\unit\right)
	=\frac{\al}{\beta}\f_{\al}\left(x+\al\unit\right).
	$$
	Consequently, 
	$$
	x=\f_\al(x+\al\unit)-\al\unit=\f(x).
	$$
\end{proof}

\begin{theo}\label{spectral order isomorphisms are canonical III}
	If \Mcal{} and \Ncal{} are \AWf{}s of Type $I$, then every spectral order isomorphism $\f:\Mcal_{sa}\to\Ncal_{sa}$ is canonical.
\end{theo}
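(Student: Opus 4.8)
The plan is to reduce the statement to the already-established positive case, Theorem~\ref{spectral order isomorphisms are canonical II}, together with the extension Lemma~\ref{Extension from positive to self-adjoint operators}, after first normalizing $\f$ so that it fixes scalar operators.

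First I would pin down the scalar behaviour of $\f$. Since $\f$ is an order isomorphism between the conditionally complete lattices $(\Mcal_{sa},\preceq)$ and $(\Ncal_{sa},\preceq)$, it preserves binary meets and joins, hence maps distributive elements to distributive elements. By Proposition~\ref{central operators}(i), $\Dcal(\Mcal_{sa},\preceq)=\Zcal(\Mcal)_{sa}$ and likewise for \Ncal{}, so $\f(\Zcal(\Mcal)_{sa})=\Zcal(\Ncal)_{sa}$. As \Mcal{} and \Ncal{} are factors these central parts are $\rr\unit_\Mcal$ and $\rr\unit_\Ncal$; since $\preceq$ restricts to the usual order on scalars (Lemma~\ref{basic properties of specral order}(iv)), there is a strictly increasing bijection $f:\rr\to\rr$ with $\f(\la\unit_\Mcal)=f(\la)\unit_\Ncal$. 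Using the function-calculus automorphism $\Phi$ of $\Ncal_{sa}$ given by $\Phi(y)=f^{-1}(y)$ (Example~\ref{spectral order isomorhisms and function calculus}), I set $\Psi=\Phi\circ\f$; this is a spectral order isomorphism $\Mcal_{sa}\to\Ncal_{sa}$ with $\Psi(\la\unit_\Mcal)=\la\unit_\Ncal$ for all $\la\in\rr$.

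Because $\Psi$ fixes scalars it fixes $0$, so it carries $\Mcal_+=\set{x}{0\preceq x}$ onto $\Ncal_+$ and restricts to a spectral order isomorphism $\Psi|_{\Mcal_+}:\Mcal_+\to\Ncal_+$. By Theorem~\ref{spectral order isomorphisms are canonical II} this restriction is canonical, say $\Psi(x)=\Theta_\tau(g(x))$ on $\Mcal_+$; evaluating at $x=\la\unit$ forces $g=\mathrm{id}$, so $\Psi=\Theta_\tau$ on $\Mcal_+$. Next I would form the spectral order automorphism $\Xi=\Theta_\tau^{-1}\circ\Psi$ of $\Mcal_{sa}$; it satisfies $\Xi(\la\unit)=\la\unit$ and $\Xi(x)=x$ for every $x\in\Mcal_+$, which are exactly hypotheses (i) and (ii) of Lemma~\ref{Extension from positive to self-adjoint operators}. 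To invoke that lemma I must check its remaining hypothesis: for each $\al\in(0,\infty)$ the map $\Xi_\al(x)=\Xi(x-\al\unit)+\al\unit$ is a canonical spectral order automorphism of $\Mcal_+$. Translation invariance of $\preceq$ (Lemma~\ref{basic properties of specral order}(i)) together with $\Xi(-\al\unit)=-\al\unit$ shows $\Xi_\al$ is a well-defined spectral order automorphism of $\Mcal_+$, and it is canonical by a further application of Theorem~\ref{spectral order isomorphisms are canonical II} (with $\Ncal=\Mcal$).

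Lemma~\ref{Extension from positive to self-adjoint operators} then yields $\Xi=\mathrm{id}$, i.e.\ $\Psi=\Theta_\tau$ on all of $\Mcal_{sa}$. Unwinding the normalization via $\f=\Phi^{-1}\circ\Psi$ gives $\f(x)=f(\Theta_\tau(x))=\Theta_\tau(f(x))$ for all $x\in\Mcal_{sa}$, where the last equality holds because $\Theta_\tau$ commutes with the function calculus of $f$ (both send the spectral family $(E^x_\la)_{\la\in\rr}$ to $(\tau(E^x_{f^{-1}(\la)}))_{\la\in\rr}$). This exhibits \f{} in canonical form. Conceptually the substantive work has already been carried out in Theorem~\ref{spectral order isomorphisms are canonical II}; the one step requiring genuine care here is verifying that the translated maps $\Xi_\al$ actually land in $\Mcal_+$ and remain spectral order automorphisms, so that Lemma~\ref{Extension from positive to self-adjoint operators} applies---this is precisely where translation invariance of the spectral order and the identity $\Xi(-\al\unit)=-\al\unit$ are essential.
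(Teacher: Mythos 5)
Your proposal is correct and follows essentially the same route as the paper's proof: identify the scalar action via Proposition~\ref{central operators}, reduce to the positive part via Theorem~\ref{spectral order isomorphisms are canonical II}, and finish with Lemma~\ref{Extension from positive to self-adjoint operators} applied to a normalized automorphism fixing scalars and positives. The only cosmetic differences are that the paper absorbs the normalization into a ``without loss of generality $f=\mathrm{id}$'' step and forms the auxiliary automorphism on $\Ncal_{sa}$ (as $\f\circ\Theta_\tau^{-1}$) rather than on $\Mcal_{sa}$, while you carry $f$ explicitly and verify at the end that $\Theta_\tau$ commutes with the function calculus of $f$.
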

\begin{proof}
	According to Proposition~\ref{central operators}, $\f(\la\unit_\Mcal)=f(\la)\unit_\Ncal$ for all $\la\in\rr$, where $f:\rr\to\rr$ is a strictly increasing bijection $f:\rr\to\rr$. We may assume without loss of generality that $f$ is identity on $\rr$. Then $f(0)=0$ and so $\f(0)=0$. Hence $\f(\Mcal_+)=\Ncal_+$. Theorem~\ref{spectral order isomorphisms are canonical II} ensures the existence of a projection isomorphism $\tau: P(\Mcal)\to P(\Ncal)$ such that $\f(x)=\Theta_\tau(x)$ for all $x\in\Mcal_+$. 
	
	Set $\psi(x)=\f(\Theta^{-1}_\tau(x))$, $x\in\Ncal_{sa}$. Then $\psi:\Ncal_{sa}\to\Ncal_{sa}$ is a spectral order automorphism satisfying $\psi(x)=x$ for all $x\in\Ncal_{+}$ and $\psi(\la\unit)=\la\unit$ for all $\la\in\rr$. Take $\al\in(0,\infty)$ and put
	$$
	\psi_\al(x)=\psi(x-\al\unit)+\al\unit
	$$
	for each $x\in\Ncal_+$. It is easy to see that $\psi_\al:\Ncal_+\to\Ncal_+$ is a spectral order automorphism. By Theorem~\ref{spectral order isomorphisms are canonical II}, $\psi_\al$ is canonical. From Lemma~\ref{Extension from positive to self-adjoint operators}, $\psi(x)=x$ for all $x\in \Ncal_{sa}$. This completes the proof.
\end{proof}
%%%%%%%%%%%%%%%%%%%%%%%%%%%%%%%%%%%%%%%%%%%%%%%%%%%%%%%%%%%%%%%%%%%%%%%%
\section{Spectral order orthoisomorphisms}

Let $\Mcal{}$ be an \AW{} having no Type I$_2$ direct summand. In \cite[Theorem~5]{HT16}, Hamhalter and Turilova proved that every spectral order orthoautomorphism $\f$ on $\Ecal(\Mcal)$ is given by the continuous function calculus and a \Jsi{} provided that $\f$ preserves scalar multiples of the unit. In the following theorem, we reformulate this result for spectral order orthoisomorphisms between sets of all effects. We present a proof here for the sake of completeness.

\begin{theo}\label{spectral order orthoisomorphisms preserving multiples of unit generalization}
	Let \Mcal{} be an \AW{} having no Type I$_2$ direct summand and let \Ncal{} be an \AW{}. If a spectral order orthoisomorphism $\f:\Ecal(\Mcal)\to\Ecal(\Ncal)$ and a strictly increasing bijection $f:[0,1]\to[0,1]$ satisfy $\f(\la\unit_\Mcal)=f(\la)\unit_\Ncal$ for all $\la\in[0,1]$, then there is a unique \Jsi{} $\psi:\Mcal\to\Ncal$ such that 
	$$
	\f(x)=\psi(f(x))
	$$
	for all $x\in\Ecal(\Mcal)$.
\end{theo}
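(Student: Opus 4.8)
The plan is to eliminate the function $f$, extract an orthoisomorphism of projection lattices, invoke Dye's theorem, and then verify that the resulting Jordan {\rm*}-isomorphism agrees with $\f$ on all of $\Ecal(\Mcal)$. First I would remove $f$: by Example~\ref{spectral order isomorhisms and function calculus} the continuous function calculus $y\mapsto f^{-1}(y)$ is a spectral order orthoautomorphism of $\Ecal(\Mcal)$ (note $f^{-1}(0)=0$), so $\eta(y):=\f(f^{-1}(y))$ is again a spectral order orthoisomorphism $\Ecal(\Mcal)\to\Ecal(\Ncal)$ with $\eta(\la\unit_\Mcal)=\la\unit_\Ncal$ for all $\la\in[0,1]$. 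It thus suffices to produce a Jordan {\rm*}-isomorphism $\psi$ with $\eta=\psi|_{\Ecal(\Mcal)}$, since then $\f(x)=\eta(f(x))=\psi(f(x))$. Being a spectral order isomorphism between the complete lattices $(\Ecal(\Mcal),\preceq)$ and $(\Ecal(\Ncal),\preceq)$ (Proposition~\ref{sublattices}), $\eta$ preserves arbitrary suprema and infima; it also preserves orthogonality in both directions and sends $\unit_\Mcal$ to $\unit_\Ncal$.

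Next I would show $\eta$ carries projections onto projections. The key is the order-orthogonality characterization: an effect $x$ is a projection if and only if there is an effect $y$ with $xy=0$ and $x\vee y=\unit$. For a projection one takes $y=\unit-x$; conversely $xy=0$ together with Lemma~\ref{supremum of orthogonal elements} gives $\unit=x\vee y=x+y$, so $y=\unit-x$ and $x(\unit-x)=0$, whence $x$ is a projection. As $\eta$ preserves orthogonality, suprema, and the unit, it preserves this property in both directions, so $\tau:=\eta|_{P(\Mcal)}$ is an orthoisomorphism $P(\Mcal)\to P(\Ncal)$. Since $\Mcal$ has no Type I$_2$ summand, Dye's theorem (Theorem~\ref{Dye theorem}) yields a unique Jordan {\rm*}-isomorphism $\psi:\Mcal\to\Ncal$ extending $\tau$; its restriction to $\Ecal(\Mcal)$ is a spectral order orthoisomorphism agreeing with $\eta$ on projections (using $E^{\psi(x)}_\la=\psi(E^x_\la)$).

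It remains to check $\eta=\psi$ on all effects, for which I would study $\sigma:=\psi^{-1}\circ\eta$, a spectral order orthoautomorphism of $\Ecal(\Mcal)$ fixing every projection and, since $\psi$ is unital, every $\la\unit$. First, $\sigma$ fixes every scalar multiple of a projection: for $p\in P(\Mcal)$ and $\la\in[0,1]$ the commuting elements $\la\unit$ and $p$ satisfy, by Lemma~\ref{basic properties of specral order} and Proposition~\ref{independence}, that the meet $(\la\unit)\wedge p$ is computed in the abelian \AWsa{} they generate and equals $\la p$; as $\sigma$ preserves $\wedge$ and fixes $\la\unit$ and $p$, it fixes $\la p$. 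The main obstacle is the final reduction: showing that every effect equals the supremum of the scalar multiples of projections it dominates, that is, $x=\bigvee\{\la p:\la p\preceq x\}$. I would establish this through spectral families. Writing $s$ for the supremum, the supremum formula gives $E^s_\mu=\bigwedge_{\la p\preceq x}E^{\la p}_\mu$; identifying for each $\la$ the largest admissible projection $p_\la=\unit-\bigvee_{\nu<\la}E^x_\nu$ and invoking right-continuity of the spectral family ($\bigwedge_{\la>\mu}E^x_\la=E^x_\mu$) collapses this infimum to $E^s_\mu=E^x_\mu$ for every $\mu$, so $s=x$.

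Applying $\sigma$ to this representation and using that it fixes each $\la p$ and preserves suprema gives $\sigma(x)=\bigvee\{\la p:\la p\preceq x\}=x$, whence $\sigma=\mathrm{id}$ and $\eta=\psi|_{\Ecal(\Mcal)}$. Consequently $\f(x)=\psi(f(x))$ for all $x\in\Ecal(\Mcal)$. Uniqueness of $\psi$ follows from the uniqueness clause of Dye's theorem: any Jordan {\rm*}-isomorphism satisfying $\f(x)=\psi(f(x))$ must restrict to $\tau$ on projections and is therefore forced. I expect the computation with the projections $p_\la$, together with the careful handling of left and right limits of the spectral family, to be the most delicate part of the argument.
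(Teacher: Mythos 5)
Your proposal is correct, and its first half coincides with the paper's own proof: both establish that projections are carried to projections (your order--orthogonality characterization of projections among effects is a two-sided repackaging of the paper's computation $\unit=\f(p)\vee\f(\unit-p)=\f(p)+\f(\unit-p)$ via Lemma~\ref{supremum of orthogonal elements}), and both then invoke Dye's theorem (Theorem~\ref{Dye theorem}) to produce the \Jsi{}. Where you genuinely diverge is the endgame. The paper reduces to a spectral order orthoautomorphism $\theta=\varrho^{-1}\circ\f$ of $\Ecal(\Mcal)$ with $\theta(\la\unit)=f(\la)\unit$ and then quotes \cite[Theorem~5]{HT16} as a black box, whereas you prove the needed identity-automorphism statement directly: $\sigma=\psi^{-1}\circ\eta$ fixes every $\la p$ because $(\la\unit)\wedge p=\la p$ (correctly justified via Proposition~\ref{independence}, computing in the abelian subalgebra spanned by $p$ and $\unit-p$, together with Proposition~\ref{sublattices}), and every effect satisfies $x=\bigvee\set{\la p}{\la p\preceq x}$, whence $\sigma=\mathrm{id}$ by preservation of suprema. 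The representation lemma you flagged as delicate does hold as sketched: writing $s$ for the supremum and $p_\la=\unit-\bigvee_{0\leq\nu<\la}E^x_\nu$, the pairs $(\la,p_\la)$ are admissible and maximal, so for $\mu\in[0,1)$
$$
E^x_\mu\;\leq\; E^s_\mu\;=\;\bigwedge_{\mu<\la\leq 1}\,\bigvee_{0\leq\nu<\la}E^x_\nu\;\leq\;\bigwedge_{\mu<\la\leq 1}E^x_\la\;=\;E^x_\mu,
$$
the last equality by right-continuity (terms with $\la>1$ equal $\unit$ and are harmless), and the cases $\mu<0$, $\mu\geq 1$ are trivial, giving $s=x$. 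In effect you re-prove, with tools already available in the paper, the special case of Hamhalter--Turilova's automorphism theorem that the paper imports; this buys a self-contained argument (modulo Dye's theorem) at the cost of length, while the paper's proof is shorter but leans on the external citation. Your initial normalization $\eta=\f\circ f^{-1}$ (rather than carrying $f$ along, as the paper does) is a cosmetic difference, and your uniqueness argument via $f(p)=p$ and the uniqueness clause of Dye's theorem is sound.
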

\begin{proof}
	Let $p$ be a projection in \Mcal{}. Since \f{} preserves orthogonality, $\f(p)$ is orthogonal to $\f(\unit-p)$. Hence 
	$$
	\unit=\f(\unit)=\f(p\vee(\unit-p))=\f(p)\vee\f(\unit-p)=\f(p)+\f(\unit-p),
	$$
	where we have used Lemma~\ref{supremum of orthogonal elements}. Multiplying both sides by $\f(p)$, we see that $\f(p)$ is a projection. Consequently, $\f(P(\Mcal))=P(\Ncal)$ and so \f{} restricts to an orthoisomorphism from $P(\Mcal)$ onto $P(\Ncal)$. By Dye's theorem (see Theorem~\ref{Dye theorem}), there is a \Jsi{} $\varrho:\Mcal\to\Ncal$ such that \f{} and $\varrho$ coincide on $P(\Mcal)$. Consider $\theta=\varrho^{-1}\circ\f$. The map $\theta$ is a spectral order orthoautomorphism from $\Ecal(\Mcal)$ onto itself such that $\theta(\la\unit_\Mcal)=f(\la)\unit_\Ncal$ for all $\la\in[0,1]$. According to \cite[Theorem~5]{HT16}, we obtain the required conclusion.
\end{proof}

In the case of an \AWf{} \Mcal{} that is not of Type I$_2$ and Type III, it was proved by Hamhalter and Turilova \cite{HT17} that every spectral order orthoautomorphism $\f$ on $\Ecal(\Mcal)$ automatically preserves scalar multiples of the unit and so we have a complete description of such automorphisms. However, the question of Type III factors was left open in \cite{HT17}. It turns out that the case of Type III factors does not lead to any different behavior.

\begin{cor}\label{spectral order orthoisomorphisms between sets of effects in factors}
	Let \Mcal{} and \Ncal{} be \AWf{}s that are not of Type I$_2$. Let $\f:\Ecal(\Mcal)\to\Ecal(\Ncal)$ be a spectral order orthoisomorphism. Then there are a unique strictly increasing bijection $f:[0,1]\to [0,1]$ and a unique \Jsi{} $\psi:\Mcal\to\Ncal$ such that 
	$$
	\f(x)=\psi(f(x))
	$$
	for all $x\in\Ecal(\Mcal)$.
\end{cor}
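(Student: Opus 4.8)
The plan is to reduce the statement entirely to Theorem~\ref{spectral order orthoisomorphisms preserving multiples of unit generalization}, whose hypotheses are already tailored to exactly this situation. That theorem manufactures the desired \Jsi{} as soon as one knows that $\f$ carries scalar multiples of $\unit_\Mcal$ to scalar multiples of $\unit_\Ncal$ by means of a strictly increasing bijection of $[0,1]$. Consequently the whole task collapses to verifying this single normalization property; once it is in place, the theorem supplies both $\psi$ and $f$ with no further work, since $\Mcal$ has no Type~I$_2$ summand (this is guaranteed by the factor assumption together with the exclusion of Type~I$_2$).

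To establish the normalization I would first note that a spectral order orthoisomorphism is in particular a spectral order isomorphism, hence an order isomorphism of the posets $(\Ecal(\Mcal),\preceq)$ and $(\Ecal(\Ncal),\preceq)$; by Proposition~\ref{sublattices} these are lattices, so $\f$ and $\f^{-1}$ preserve binary suprema and infima. Preservation of $\vee$ and $\wedge$ immediately forces $\f$ to send distributive elements of $(\Ecal(\Mcal),\preceq)$ onto distributive elements of $(\Ecal(\Ncal),\preceq)$, and symmetrically for $\f^{-1}$. By Proposition~\ref{central operators}(iii) the distributive elements are precisely the central effects, so $\f$ restricts to a bijection between $\Ecal(\Zcal(\Mcal))$ and $\Ecal(\Zcal(\Ncal))$. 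Because \Mcal{} and \Ncal{} are \AWf{}s, their centers are one-dimensional, whence $\Ecal(\Zcal(\Mcal))=\set{\la\unit_\Mcal}{\la\in[0,1]}$ and similarly for \Ncal{}. This yields a bijection $f:[0,1]\to[0,1]$ with $\f(\la\unit_\Mcal)=f(\la)\unit_\Ncal$ for all $\la\in[0,1]$. Since distinct scalar multiples of the unit commute, Lemma~\ref{basic properties of specral order}(iv) identifies $\preceq$ with the numerical order on them, so the two-sided order-preservation of $\f$ forces $f$ to be strictly increasing (and hence, as a monotone bijection of an interval, continuous).

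With the normalization secured, Theorem~\ref{spectral order orthoisomorphisms preserving multiples of unit generalization} provides a \Jsi{} $\psi:\Mcal\to\Ncal$ with $\f(x)=\psi(f(x))$ for all $x\in\Ecal(\Mcal)$. For uniqueness, suppose $\psi_1(f_1(x))=\psi_2(f_2(x))$ for every effect $x$; evaluating at $x=\la\unit_\Mcal$ and using that every \Jsi{} is unital (so fixes scalars) gives $f_1=f_2=:f$, and then, as $x\mapsto f(x)$ is a bijection of $\Ecal(\Mcal)$, the maps $\psi_1,\psi_2$ agree on $\Ecal(\Mcal)$; since effects span \Mcal{} linearly, the two Jordan \Sim{}s coincide. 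The one genuinely delicate point—and precisely what was left open for Type~III factors in \cite{HT17}—is the preservation of scalar multiples of the unit. The main obstacle is therefore dissolved rather than confronted: the distributivity characterization of central elements in Proposition~\ref{central operators} is completely type-independent, so Type~III requires no separate argument, and the factor hypothesis is used only to pin down $\Ecal(\Zcal(\cdot))$ as the set of scalar multiples of the unit.
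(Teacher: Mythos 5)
Your proof is correct and follows exactly the paper's route: the paper's own (one-line) proof invokes Proposition~\ref{central operators} to transfer scalar multiples of the unit (via preservation of distributive elements, which in a factor are precisely the scalar effects) and then applies Theorem~\ref{spectral order orthoisomorphisms preserving multiples of unit generalization}, which is precisely your argument spelled out in detail. Your explicit treatment of the uniqueness of $f$ and $\psi$ (via unitality of Jordan \Sim{}s and the fact that effects span \Mcal{}) fills in a point the paper leaves implicit, but it is a routine completion of the same approach rather than a different one.
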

\begin{proof}
	The statement is a direct consequence of Proposition~\ref{central operators} and Theorem~\ref{spectral order orthoisomorphisms preserving multiples of unit generalization}.
\end{proof}

The previous results can be generalized to spectral order isomorphisms between positive parts (or between self-adjoint parts) of \AW{}s as follows.

\begin{theo}\label{spectral order orthoisomorphisms on positive operators}
	Let \Mcal{} be an \AW{} having no Type I$_2$ direct summand and let \Ncal{} be an \AW{}. If a spectral order orthoisomorphism $\f:\Mcal_+\to\Ncal_+$ and a strictly increasing bijection $f:[0,\infty)\to [0,\infty)$ satisfy $\f(\la\unit_\Mcal)=f(\la)\unit_\Ncal$ for all $\la\in[0,\infty)$, then there is a unique \Jsi{} $\psi:\Mcal\to\Ncal$ such that 
	$$
	\f(x)=\psi(f(x))
	$$
	for all $x\in\Mcal_+$.
\end{theo}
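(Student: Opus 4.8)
The plan is to reduce the statement about positive operators to the already-established result for effects, Theorem~\ref{spectral order orthoisomorphisms preserving multiples of unit generalization}, by exactly the scaling technique used in Lemma~\ref{Extension from effects to positive operators}. The key observation is that multiplication by a positive scalar is a spectral order isomorphism preserving orthogonality (Example~\ref{spectral order isomorhisms and function calculus} and Lemma~\ref{basic properties of specral order}), so for each $\al\in[1,\infty)$ the formula
$$
\f_\al(x)=\frac{1}{\al}\f(f^{-1}(\al x))
$$
defines a spectral order orthoisomorphism $\f_\al:\Ecal(\Mcal)\to\Ecal(\Ncal)$. The fact that this maps $\Ecal(\Mcal)$ onto $\Ecal(\Ncal)$ is precisely Lemma~\ref{Image of the set of all effects}, and orthogonality is preserved in both directions because $\f$ preserves it and pre/post-composition with scalar multiplication and with the function calculus induced by $f^{-1}$ and $f$ (which fixes $0$) does too.

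First I would check that each $\f_\al$ satisfies the hypothesis of Theorem~\ref{spectral order orthoisomorphisms preserving multiples of unit generalization}, namely that it sends $\la\unit_\Mcal$ to a scalar multiple of $\unit_\Ncal$. This is a direct computation: since $\f(\mu\unit_\Mcal)=f(\mu)\unit_\Ncal$, one gets $\f_\al(\la\unit_\Mcal)=\frac{1}{\al}\f(f^{-1}(\al\la)\unit_\Mcal)=\frac{1}{\al}f(f^{-1}(\al\la))\unit_\Ncal=\la\unit_\Ncal$, so in fact $\f_\al$ fixes scalar multiples of the unit (the associated increasing bijection is the identity on $[0,1]$). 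Applying Theorem~\ref{spectral order orthoisomorphisms preserving multiples of unit generalization} then yields, for each $\al$, a unique \Jsi{} $\psi_\al:\Mcal\to\Ncal$ with $\f_\al(x)=\psi_\al(x)$ for all $x\in\Ecal(\Mcal)$.

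Next I would show the $\psi_\al$ are independent of $\al$, so they glue to a single \Jsi{} on all of $\Mcal_+$. The mechanism is the compatibility relation between the $\f_\al$ at different scales: if $0\leq x\leq\tfrac{1}{\al}\unit_\Mcal$ then $\al x\in\Ecal(\Mcal)$ and a short computation gives $\al\f_\al(x)=\f(f^{-1}(\al x))=\al\f_1(x)$ on the common domain, forcing $\psi_\al=\psi_1=:\psi$ on an order-ideal of effects large enough to determine the Jordan isomorphism by linearity. I would then set $\psi:=\psi_1$ and verify $\f(x)=\psi(f(x))$ for all $x\in\Mcal_+$ by writing an arbitrary $y\in\Mcal_+$ as $y=\beta x$ with $x\in\Ecal(\Mcal)$, $\beta\in[1,\infty)$, and using $\f(f^{-1}(y))=\f(f^{-1}(\beta x))=\beta\f_\beta(x)=\beta\psi(x)=\psi(\beta x)=\psi(f^{-1}(y))$ together with the fact that $\psi$ commutes with the continuous function calculus along the spectral families (the defining identity $E^{\psi(x)}_\la=\psi(E^x_\la)$). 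Uniqueness of $\psi$ follows from uniqueness in Theorem~\ref{spectral order orthoisomorphisms preserving multiples of unit generalization} together with the fact that a \Jsi{} is determined by its values on positive operators.

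The main obstacle I anticipate is the gluing step: one must be sure that the \emph{uniqueness} clause of Theorem~\ref{spectral order orthoisomorphisms preserving multiples of unit generalization} really does identify $\psi_\al$ with $\psi_1$, rather than merely showing the two agree on a subset of effects. The clean way to handle this is to argue that $\psi_\al$ and $\psi_1$ agree on the scaled set $f^{-1}(\al\,\Ecal(\Mcal))\cap\Ecal(\Mcal)$, which spans a norm-dense $\Sa$-subalgebra (indeed it contains all effects of norm at most $1/\al$, hence all sufficiently small positive elements), and then invoke continuity and linearity of Jordan \Sim{}s to conclude equality everywhere. Once this identification is secured, the remainder is the routine scaling bookkeeping modeled verbatim on the proof of Lemma~\ref{Extension from effects to positive operators}(ii).
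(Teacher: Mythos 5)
Your proposal is correct and follows essentially the same route as the paper: the paper likewise defines $\f_\al(x)=\frac{1}{\al}\f(f^{-1}(\al x))$, uses Lemma~\ref{Image of the set of all effects} and Theorem~\ref{spectral order orthoisomorphisms preserving multiples of unit generalization} to see that each $\f_\al$ coincides with a \Jsi{} on $\Ecal(\Mcal)$, and then cites Lemma~\ref{Extension from effects to positive operators}(ii) for exactly the gluing/scaling step you carry out by hand. The only blemishes are cosmetic and harmless: the last term of your verification chain should read $\psi(y)$ rather than $\psi(f^{-1}(y))$, and the agreement set in your final paragraph is $\frac{1}{\al}\Ecal(\Mcal)$ (all positive elements below $\frac{1}{\al}\unit_\Mcal$) rather than $f^{-1}(\al\,\Ecal(\Mcal))\cap\Ecal(\Mcal)$.
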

\begin{proof}
	Let $\al\in[1,\infty)$. We see from Lemma~\ref{Image of the set of all effects} that 
	$$
	\f_\al(x)=\frac{1}{\al}\f(f^{-1}(\al x))
	$$ 
	defines a spectral order orthoisomorphism from $\Ecal(\Mcal)$ onto $\Ecal(\Ncal)$. It follows from Theorem~\ref{spectral order orthoisomorphisms preserving multiples of unit generalization} that $\f_\al$ coincides with a \Jsi{} on $\Ecal(\Mcal)$. Thus Lemma~\ref{Extension from effects to positive operators} ensures the existence of a \Jsi{} $\psi:\Mcal\to\Ncal$ such that $\f(x)=\psi(f(x))$ for all $x\in\Mcal_+$.
\end{proof}

\begin{cor}
	Let \Mcal{} and \Ncal{} be \AWf{}s that are not of Type I$_2$. Let $\f:\Mcal_+\to\Ncal_+$ be a spectral order orthoisomorphism. Then there are a unique strictly increasing bijection $f:[0,\infty)\to [0,\infty)$ and a unique \Jsi{} $\psi:\Mcal\to\Ncal$ such that 
	$$
	\f(x)=\psi(f(x))
	$$
	for all $x\in\Mcal_+$.
\end{cor}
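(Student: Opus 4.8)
The plan is to reduce the corollary to Theorem~\ref{spectral order orthoisomorphisms on positive operators}, in exactly the same way Corollary~\ref{spectral order orthoisomorphisms between sets of effects in factors} was obtained from its companion theorem. The only hypothesis of that theorem not yet at hand is that $\f$ preserves scalar multiples of the unit, so the whole content of the argument is to extract this property from the factor condition.

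First I would note that a spectral order orthoisomorphism $\f:\Mcal_+\to\Ncal_+$ is, in particular, an order isomorphism between the lattices $(\Mcal_+,\preceq)$ and $(\Ncal_+,\preceq)$; these are genuine sublattices of the self-adjoint parts by Proposition~\ref{sublattices}, so all binary meets and joins exist. An order isomorphism preserves binary meets and joins, and hence carries distributive elements to distributive elements in both directions (applying $\f$ and $\f^{-1}$ to the defining identity $z\vee(x\wedge y)=(z\vee x)\wedge(z\vee y)$ and using surjectivity). By Proposition~\ref{central operators}(ii) we have $\Dcal(\Mcal_+,\preceq)=\Zcal(\Mcal)_+$ and $\Dcal(\Ncal_+,\preceq)=\Zcal(\Ncal)_+$, so $\f$ restricts to a bijection $\Zcal(\Mcal)_+\to\Zcal(\Ncal)_+$.

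Next I would invoke the factor hypothesis. Since \Mcal{} and \Ncal{} are \AWf{}s, their centers are one-dimensional, whence $\Zcal(\Mcal)_+=\{\la\unit_\Mcal : \la\in[0,\infty)\}$ and $\Zcal(\Ncal)_+=\{\mu\unit_\Ncal : \mu\in[0,\infty)\}$. Consequently there is a bijection $f:[0,\infty)\to[0,\infty)$ with $\f(\la\unit_\Mcal)=f(\la)\unit_\Ncal$ for all $\la\in[0,\infty)$. Because scalar multiples of the unit commute, Lemma~\ref{basic properties of specral order}(iv) gives $\la\unit_\Mcal\preceq\la'\unit_\Mcal$ iff $\la\leq\la'$, and the same in \Ncal{}; since $\f$ preserves $\preceq$ in both directions, $f$ is a strictly increasing bijection.

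Finally, as a factor that is not of Type I$_2$ has no Type I$_2$ direct summand, the hypotheses of Theorem~\ref{spectral order orthoisomorphisms on positive operators} are met, yielding a unique \Jsi{} $\psi:\Mcal\to\Ncal$ with $\f(x)=\psi(f(x))$ for all $x\in\Mcal_+$; uniqueness of $f$ is immediate since it is determined by the restriction of $\f$ to scalar multiples of the unit. I do not expect a serious obstacle here, since the substantive work is already packaged in the preceding theorem. The only point requiring care is verifying that $\f$ respects the lattice operations needed to transport distributivity, and this follows formally from $\f$ being a two-sided order isomorphism between the (conditionally complete) lattices $(\Mcal_+,\preceq)$ and $(\Ncal_+,\preceq)$.
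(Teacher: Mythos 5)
Your proposal is correct and is essentially the paper's own proof: the paper disposes of this corollary by citing Proposition~\ref{central operators} together with Theorem~\ref{spectral order orthoisomorphisms on positive operators}, and your argument is exactly the unpacking of that citation (distributive elements of $(\Mcal_+,\preceq)$ are the positive central elements, the factor hypothesis makes these the nonnegative multiples of the unit, whence $\f$ induces a strictly increasing bijection $f$ and the theorem applies). Nothing to correct.
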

\begin{proof}
	It follows from Proposition~\ref{central operators} and Theorem~\ref{spectral order orthoisomorphisms on positive operators}.
\end{proof}

Let us fix a notation. We denote by $x^+$ and $x^-$, respectively, the positive part and the negative part of a self-adjoint operator $x$ in an \AW{}.

\begin{lem}\label{positive and negative parts}
	Let $\f:\Mcal_{sa}\to\Ncal_{sa}$ be a spectral order isomorphism between self-adjoint parts of \AW{}s \Mcal{} and \Ncal{} with $\f(0)=0$. If $x\in\Mcal_{sa}$, then $\f(x)^+=\f(x^+)$ and $\f(x)^-=-\f(-(x^-))$.
\end{lem}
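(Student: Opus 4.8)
The plan is to reduce the statement to two order-theoretic identities that express $x^+$ and $x^-$ as a join and a meet with $0$ in the spectral order, and then to exploit that a spectral order isomorphism, being an order isomorphism between conditionally complete lattices, automatically transports joins and meets.

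First I would record the identities $x^+=x\vee 0$ and $-(x^-)=x\wedge 0$ in $(\Mcal_{sa},\preceq)$. The first is immediate from Lemma~\ref{supremum of orthogonal elements}: since $x$ and $0$ are mutually orthogonal, $x\vee 0=x^++0^+=x^+$. For the second I would use that the map $t\mapsto -t$ is an order-reversing bijection of $(\Mcal_{sa},\preceq)$ (as already invoked in the proof of Proposition~\ref{independence}), so that it interchanges joins and meets; applying Lemma~\ref{supremum of orthogonal elements} to the orthogonal pair $-x,0$ yields $(-x)\vee 0=(-x)^+=x^-$, whence $x\wedge 0=-\bigl((-x)\vee 0\bigr)=-(x^-)$. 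The very same two identities hold verbatim in $\Ncal_{sa}$, in particular for the element $\f(x)$, giving $\f(x)\vee 0=\f(x)^+$ and $\f(x)\wedge 0=-\bigl(\f(x)^-\bigr)$.

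Next I would observe that $\f$ preserves binary joins and meets. A spectral order isomorphism is by definition an order isomorphism whose inverse is again order preserving, and both $(\Mcal_{sa},\preceq)$ and $(\Ncal_{sa},\preceq)$ are conditionally complete lattices. The pair $\{x,0\}$ is bounded above and below (for instance by $\pm\norm{x}\unit$, which commute with $x$, so the comparisons reduce to the L\"owner order by Lemma~\ref{basic properties of specral order}), so $x\vee 0$ and $x\wedge 0$ exist and are carried by $\f$ to the least upper bound $\f(x)\vee\f(0)$ and the greatest lower bound $\f(x)\wedge\f(0)$, respectively. Invoking the hypothesis $\f(0)=0$, these become $\f(x)\vee 0$ and $\f(x)\wedge 0$.

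Combining the two steps then finishes the proof. For the positive part, $\f(x^+)=\f(x\vee 0)=\f(x)\vee 0=\f(x)^+$. For the negative part, $\f(-(x^-))=\f(x\wedge 0)=\f(x)\wedge 0=-\bigl(\f(x)^-\bigr)$, which rearranges to $\f(x)^-=-\f(-(x^-))$. I do not expect a serious obstacle here; the only points requiring care are the bookkeeping of the order-reversing involution $t\mapsto -t$ in passing from the join identity for $x^+$ to the meet identity for $x^-$, and the verification that $\f$ transports meets as well as joins, which holds precisely because $\f^{-1}$ is itself a spectral order isomorphism.
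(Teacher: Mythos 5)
Your proof is correct and takes essentially the same approach as the paper: the paper likewise derives $y\vee 0=y^+$ and $y\wedge 0=-y^-$ from Lemma~\ref{supremum of orthogonal elements} together with the order-reversing map $t\mapsto -t$, and then concludes by letting $\f$ (an order isomorphism fixing $0$) transport these joins and meets. Your write-up merely makes explicit the existence of the relevant suprema and infima and the fact that an order isomorphism preserves them, details the paper leaves implicit.
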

\begin{proof}
	Using Lemma~\ref{supremum of orthogonal elements} and the fact that multiplication by $-1$ is order-reversing map, $y\vee 0=y^+$ and $y\wedge 0=-y^-$ for each $y\in\Mcal_{sa}$. Hence $\f(x)^+=\f(x^+)$ and $\f(x)^-=-\f(-(x^-))$.
\end{proof}

\begin{theo}\label{spectral order orthoisomorphisms on self-adjoint operators}
	Let \Mcal{} be an \AW{} having no Type I$_2$ direct summand and let \Ncal{} be an \AW{}. If a spectral order orthoisomorphism $\f:\Mcal_{sa}\to\Ncal_{sa}$ and a strictly increasing bijection $f:\rr\to\rr$ satisfy $\f(\la\unit_\Mcal)=f(\la)\unit_\Ncal$ for all $\la\in\rr$, then there is a unique \Jsi{} $\psi:\Mcal\to\Ncal$ such that 
	$$
	\f(x)=\psi(f(x))
	$$
	for all $x\in\Mcal_{sa}$.
\end{theo}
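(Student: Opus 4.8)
The plan is to reduce everything to the already-established case of positive operators, Theorem~\ref{spectral order orthoisomorphisms on positive operators}, applied twice, once on the positive cone and once on the negative cone, and then to glue the two resulting Jordan *-isomorphisms together using the fact that \f{} preserves orthogonality.

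First I would record that $\f(0)=0$. Indeed $\f(0)=\f(0\cdot\unit_\Mcal)=f(0)\unit_\Ncal$, while orthogonality preservation applied to $0\cdot 0=0$ forces $\f(0)^2=0$, hence $\f(0)=0$ and $f(0)=0$. Since $\Mcal_+=\{x\in\Mcal_{sa}:0\preceq x\}$ and $\Ncal_+=\{y\in\Ncal_{sa}:0\preceq y\}$ (a direct consequence of the description of $E^0_\la$), the equality $\f(0)=0$ shows that \f{} restricts to a spectral order orthoisomorphism $\f|_{\Mcal_+}:\Mcal_+\to\Ncal_+$; moreover $f|_{[0,\infty)}$ is a strictly increasing bijection of $[0,\infty)$ with $\f(\la\unit_\Mcal)=f(\la)\unit_\Ncal$. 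Theorem~\ref{spectral order orthoisomorphisms on positive operators} then yields a unique \Jsi{} $\psi:\Mcal\to\Ncal$ with $\f(x)=\psi(f(x))$ for all $x\in\Mcal_+$.

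Next I would treat the negative cone $-\Mcal_+=\{x:x\preceq 0\}$, which \f{} maps onto $-\Ncal_+$. Since $t\mapsto -t$ reverses the spectral order (as used already in Proposition~\ref{independence} and Lemma~\ref{positive and negative parts}) and preserves orthogonality, the formula $\widetilde\f(y)=-\f(-y)$ defines a spectral order orthoisomorphism $\widetilde\f:\Mcal_+\to\Ncal_+$ with $\widetilde\f(\la\unit_\Mcal)=\widetilde f(\la)\unit_\Ncal$, where $\widetilde f(\la)=-f(-\la)$ is again a strictly increasing bijection of $[0,\infty)$. Applying Theorem~\ref{spectral order orthoisomorphisms on positive operators} to $\widetilde\f$ produces a \Jsi{} $\widetilde\psi$ with $\widetilde\f(y)=\widetilde\psi(\widetilde f(y))$; unwinding the definitions and using linearity of $\widetilde\psi$ gives $\f(v)=\widetilde\psi(f(v))$ for every $v\in-\Mcal_+$.

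The main obstacle, and the heart of the argument, is to show that the two Jordan *-isomorphisms coincide, $\psi=\widetilde\psi$. I would test them on projections. For $p\in P(\Mcal)$ the continuous function calculus gives $f(p)=f(1)p$ and $f(-p)=f(-1)p$ with $f(1)>0>f(-1)$, so $\f(p)=f(1)\psi(p)$ and $\f(-p)=f(-1)\widetilde\psi(p)$. The elements $p$ and $-(\unit_\Mcal-p)$ are orthogonal, hence so are their images; since $\widetilde\psi$ is unital this reads $\psi(p)\bigl(\unit_\Ncal-\widetilde\psi(p)\bigr)=0$, that is, $\psi(p)\leq\widetilde\psi(p)$. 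Using instead that $-p$ and $\unit_\Mcal-p$ are orthogonal gives the reverse inequality, so $\psi(p)=\widetilde\psi(p)$ for all projections. As an \AW{} is the closed linear span of its projections and Jordan *-isomorphisms are isometric, linearity and continuity force $\psi=\widetilde\psi$.

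Finally I would assemble the pieces. For arbitrary $x\in\Mcal_{sa}$, Lemma~\ref{positive and negative parts} (with $\f(0)=0$) gives $\f(x)=\f(x)^+-\f(x)^-=\f(x^+)+\f(-(x^-))$. The first summand equals $\psi(f(x^+))$ because $x^+\in\Mcal_+$, and the second equals $\widetilde\psi(f(-(x^-)))=\psi(f(-(x^-)))$ because $-(x^-)\in-\Mcal_+$ and $\psi=\widetilde\psi$. Since $x^+$ and $x^-$ have orthogonal supports and $f(0)=0$, the function calculus satisfies $f(x)=f(x^+)+f(-(x^-))$ with orthogonal summands, so linearity of $\psi$ yields $\f(x)=\psi(f(x^+)+f(-(x^-)))=\psi(f(x))$. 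Uniqueness of $\psi$ is inherited from the uniqueness already guaranteed on $\Mcal_+$.
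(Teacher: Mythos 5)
Your proof is correct and follows essentially the same route as the paper's: establish $\f(0)=0$, apply Theorem~\ref{spectral order orthoisomorphisms on positive operators} on the positive cone, transfer the negative cone through the order-reversing map $t\mapsto -t$, match the resulting behaviour on projections via the orthogonal pairs $p\perp -(\unit_\Mcal-p)$ and $-p\perp(\unit_\Mcal-p)$, and glue with Lemma~\ref{positive and negative parts}. The paper merely packages the comparison differently---it composes $\f$ with $\psi^{-1}$ and $f^{-1}$ to reduce to showing that a spectral order orthoautomorphism $\varrho$ of $\Ncal_{sa}$ fixing $\Ncal_-$ and the scalar multiples of the unit is the identity, whereas you compare two \Jsi{}s $\psi$ and $\widetilde{\psi}$ directly; your symmetric two-cone version is equivalent, and as a minor bonus both of your applications of the positive-cone theorem have domain $\Mcal_+$, so you avoid the paper's tacit reliance on the fact that $\Ncal$ (being Jordan {\rm*}-isomorphic to $\Mcal$) also has no Type I$_2$ direct summand.
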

\begin{proof}
	As $\f$ preserves orthogonality and $0$ is orthogonal to $\la\unit_\Mcal$ for all $\la\in\rr$, we have $\f(0)=f(0)\unit_\Ncal=0$.	We infer from this that $\f(\Mcal_+)=\Ncal_+$. By Theorem~\ref{spectral order orthoisomorphisms on positive operators}, there is a unique \Jsi{} $\psi:\Mcal\to\Ncal$ such that $\f(x)=\psi(f(x))$ for all $x\in\Mcal_+$. 
	
	Consider a spectral order orthoautomorphism $\varrho:\Ncal_{sa}\to\Ncal_{sa}$ defined by 
	$$
	\varrho(x)=-\f(f^{-1}(\psi^{-1}(-x))).
	$$
	In order to complete the proof, we need to show that $\varrho(x)=x$ for all $x\in\Ncal_{sa}$. It follows from the above discussion that $\varrho$ is the identity map on $\Ncal_-=-\Ncal_+$, $\varrho(\Ncal_+)=\Ncal_+$, and $\varrho(\la\unit)=\la\unit$ for all $\la\in\rr$. By virtue of Theorem~\ref{spectral order orthoisomorphisms on positive operators}, there exists a unique \Jsi{} $\widetilde{\psi}:\Ncal\to\Ncal$ such that $\varrho(x)=\widetilde{\psi}(x)$ for all $x\in\Ncal_+$. Let $p\in P(\Ncal)$. Entirely similar discussion to that of the proof of Theorem~\ref{spectral order orthoisomorphisms preserving multiples of unit generalization} leads to the equation $\unit=\rho(p)+\rho(\unit-p)$. Upon multiplying both sides of the equation by $\varrho(-p)$, we get
	$$
	\varrho(-p)=\varrho(p)\varrho(-p).
	$$
	Moreover, $\varrho$ is the identity map on $\Ncal_-$ and so 
	$$
	-\unit=\varrho(-p)+\varrho(-(\unit-p)).
	$$
	We multiply both sides of the last equation by $\varrho(p)$ to obtain
	$$
	-\varrho(p)=\varrho(p)\varrho(-p)=\varrho(-p).
	$$
	Hence $p=\widetilde{\psi}(p)$. As $p\in P(\Ncal)$ is arbitrary, $\widetilde{\psi}(x)=x$ for all $x\in\Ncal$. This means that $\varrho(x)=x$ for all $x\in\Ncal_+\cup\Ncal_-$. 
	
	Now we assume that $x\in\Ncal_{sa}$. We see from Lemma~\ref{positive and negative parts} that
	$$
	\varrho(x)=\varrho(x)^+-\varrho(x)^-=\varrho(x^+)+\varrho(-(x^-))=x^+-x^-=x.
	$$
\end{proof}

\begin{cor}
	Let \Mcal{} and \Ncal{} be \AWf{}s that are not of Type I$_2$. Let $\f:\Mcal_{sa}\to\Ncal_{sa}$ be a spectral order orthoisomorphism. Then there are a unique strictly increasing bijection $f:\rr\to \rr$ and a unique \Jsi{} $\psi:\Mcal\to\Ncal$ such that 
	$$
	\f(x)=\psi(f(x))
	$$
	for all $x\in\Mcal_{sa}$.
\end{cor}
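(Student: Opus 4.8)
The plan is to reduce the statement to Theorem~\ref{spectral order orthoisomorphisms on self-adjoint operators}, whose only additional hypothesis is that \f{} preserve the real scalar multiples of the unit. Thus the entire task is to verify that, when \Mcal{} and \Ncal{} are factors, this preservation is automatic, and that the map it induces on scalars is a strictly increasing bijection $f:\rr\to\rr$. Once that is in place, the substantive content has already been supplied by the theorem.

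First I would observe that \f{}, being a bijection preserving $\preceq$ in both directions, is an order isomorphism between the conditionally complete lattices $(\Mcal_{sa},\preceq)$ and $(\Ncal_{sa},\preceq)$, and therefore carries existing suprema and infima to suprema and infima. Since distributivity of an element is phrased purely through binary meets and joins, and each pair $\{x,y\}\subseteq\Mcal_{sa}$ is bounded above and below by suitable multiples of the unit (via Lemma~\ref{basic properties of specral order}, as $x$ commutes with $\norm{x}\unit$), these binary operations always exist and are preserved by \f{}. Hence \f{} maps $\Dcal(\Mcal_{sa},\preceq)$ onto $\Dcal(\Ncal_{sa},\preceq)$. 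By Proposition~\ref{central operators}(i) these distributive sets are exactly $\Zcal(\Mcal)_{sa}$ and $\Zcal(\Ncal)_{sa}$; because \Mcal{} and \Ncal{} are factors, they coincide with $\rr\unit_\Mcal$ and $\rr\unit_\Ncal$, respectively, so \f{} restricts to a bijection of $\rr\unit_\Mcal$ onto $\rr\unit_\Ncal$.

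Next I would identify this restriction with a map $f:\rr\to\rr$ through $\f(\la\unit_\Mcal)=f(\la)\unit_\Ncal$. Since scalar multiples of the unit commute, Lemma~\ref{basic properties of specral order}(iv) gives $\la\unit\preceq\mu\unit$ \ifff{} $\la\leq\mu$, so $f$ inherits from \f{} the property of being an order isomorphism of $\rr$, that is, a strictly increasing bijection. With $\f(\la\unit_\Mcal)=f(\la)\unit_\Ncal$ now in hand and \Mcal{} having no Type I$_2$ direct summand, Theorem~\ref{spectral order orthoisomorphisms on self-adjoint operators} furnishes a unique \Jsi{} $\psi:\Mcal\to\Ncal$ with $\f(x)=\psi(f(x))$ for all $x\in\Mcal_{sa}$. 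Uniqueness of $f$ then follows by evaluating this identity at $\la\unit_\Mcal$ and using that $\psi$ preserves the unit, while uniqueness of $\psi$ is inherited directly from the theorem.

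I expect no serious obstacle: the analytic substance is already absorbed into Theorem~\ref{spectral order orthoisomorphisms on self-adjoint operators} and into the distributivity characterization of the center in Proposition~\ref{central operators}. The only point demanding a little care is the transfer of distributivity, where one must confirm that \f{} genuinely preserves the relevant binary suprema and infima rather than merely the order relation; this is precisely where the conditional completeness of the spectral lattices and the boundedness of pairs by multiples of the unit enter, and it is what allows the factor hypothesis to pin the image of $\rr\unit_\Mcal$ down to $\rr\unit_\Ncal$.
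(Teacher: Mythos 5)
Your proposal is correct and follows exactly the route the paper takes: its proof of this corollary is the one-line observation that it is a direct consequence of Proposition~\ref{central operators} and Theorem~\ref{spectral order orthoisomorphisms on self-adjoint operators}, which is precisely your reduction. What you add---that the order isomorphism preserves the (existing) binary meets and joins and hence distributive elements, that the factor hypothesis collapses the centers to $\rr\unit_\Mcal$ and $\rr\unit_\Ncal$, and the uniqueness check via evaluation at $\la\unit_\Mcal$---is just a careful spelling-out of the details the paper leaves implicit.
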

\begin{proof}
	The statement is a direct consequence of Proposition~\ref{central operators} and Theorem~\ref{spectral order orthoisomorphisms on self-adjoint operators}.
\end{proof}

%%%%%%%%%%%%%%%%%%%%%%%%%%%%%%%%%%%%%%%%%%%%%%%%%%%%%%%%%%%%%%%%%%%%%%%%
%%%%%%%%%%%%%%%%%%%%%%%%%%%%%%%%%%%%%%%%%%%%%%%%%%%%%%%%%%%%%%%%%%%%%%%%
\section*{Acknowledgement}
This work was supported by the project OPVVV Center for Advanced Applied Science CZ.02.1.01/0.0/0.0/16\_019/0000778 and  the ``Grant Agency of the Czech Republic" grant number 17-00941S, ``Topological and geometrical properties of Banach spaces and operator algebras II". 

%%%%%%%%%%%%%%%%%%%%%%%%%%%%%%%%%%%%%%%%%%%%%%%%%%%%%%%%%%%%%%%%%%%%%%%%
%%%%%%%%%%%%%%%%%%%%%%%%%%%%%%%%%%%%%%%%%%%%%%%%%%%%%%%%%%%%%%%%%%%%%%%%


\begin{thebibliography}{99}

\bibitem{Be95} M. K. Bennett: {\em Affine and Projective Geometry}, Wiley, New York, 1995.

\bibitem{Be11} S. K. Berberian: {\em Bear *-rings}, Springer, Berlin, 2011. 

\bibitem{Bo19} M. Bohata: {\em Vigier’s theorem for the spectral order and its applications}, J. Math. Anal. Appl. {\bf 476}, 801--810 (2019).

\bibitem{DD14} A. D\"{o}ring and B. Dewitt: {\em Self-adjoint operators as functions I}, Commun. Math. Phys. {\bf 328}, 499--525 (2014) 

\bibitem{Dy55} H. A. Dye: {\em  On the geometry of projections in certain operator algebras}, Ann. of Math. {\bf 61} (1955). 73--89.

\bibitem{FL84} 
P. A. Fillmore and W. E. Longstaff: {\it On isomorphisms of lattices of closed subspaces}, Can. J. Math. {\bf 36}, 820--829 (1984).

\bibitem{FP19} 
D. J. Foulis and S. Pulmannov\'a: {\it Spectral order on a synaptic algebras}, Order {\bf 36}, 1--17 (2019).

\bibitem{dG05}
H. F. de Groote: \textit{On a canonical lattice structure on the effect algebra of a von Neumann algebra}, arXiv:math-ph/0410018v2.

\bibitem{Ha11} 
H. Halvarson: {\it Deep Beauty}, Cambridge University Press, New York, 2011.

\bibitem{Ha07} 
J. Hamhalter: {\it Spectral order of operators and range projections}, J. Math. Anal. Appl. {\bf 331}, 1122--1134 (2007).

\bibitem{Ha15} 
J. Hamhalter: {\it Dye's theorem and Gleason's theorem for \AW{}s}, J. Math. Anal. Appl. {\bf 422}, 1103--1115 (2015).

\bibitem{HT16} 
J. Hamhalter and E. Turilova: {\it Spectral order on \AW{}s and its preservers}, Lobachevskii J. Math. {\bf 37}, 439--448 (2016).

\bibitem{HT17} 
J. Hamhalter and E. Turilova: {\it Quantum spectral symmetries}, Int. J. Theor. Phys. {\bf 56}, 3807--3818 (2017).

\bibitem{Ka51} 
R. V. Kadison: \textit{Order properties of bounded self-adjoint operators}, Proc. Amer. Math. Soc. {\bf 2}, 505--510 (1951).  

\bibitem{Ka52}
I. Kaplansky: \textit{Algebras of Type I}, Ann. Math. {\bf 56}, 460--472 (1952).

\bibitem{MN16}
L. Moln\'{a}r and G. Nagy: \textit{Spectral order automorphisms of the spaces of Hilbert space effects and observables: The 2-dimensional case}, Lett. Math. Phys. {\bf 106}, 535--544 (2016).

\bibitem{MS07}
L. Moln\'{a}r and P. \v{S}emrl: \textit{Spectral order automorphisms of the spaces of Hilbert space effects and observables}, Lett. Math. Phys. {\bf 80}, 239--255 (2007).

\bibitem{Ol71} 
M. P. Olson: \textit{The self-adjoint operators of a von Neumann algebra form a conditionally complete lattice}, Proc. Amer. Math. Soc. {\bf 28}, 537--544 (1971). 

\bibitem{PS12}
A. P\l{}aneta and J. Stochel: \textit{Spectral order for unbounded operators}, J. Math. Anal. Appl. {\bf 389}, 1029--1045 (2012). 

\bibitem{Ro08}
S. Roman: \textit{Lattices and Ordered Sets}, Springer, New York, 2008.

\bibitem{SW15}
K.~Sait\^{o} and J.D.M.~Wright: \textit{Monotone Complete \Ca{}s and Generic Dynamics}, Springer, London, 2015.

\bibitem{Sh51} 
S. Sherman: \textit{Order in operator algebras}, Amer. J. Math. {\bf 73}, 227--232 (1951).

\bibitem{Tu14}
E. Turilova: \textit{Automorphisms of spectral lattices of unbounded positive operators}, Lobachevskii J. Math. {\bf 35}, 259--263 (2014).

\bibitem{Wo14}
S. Wolters: \textit{Topos models for physics and topos theory}, J. Math. Phys. {\bf 55}, 082110 (2014).

\end{thebibliography}
\end{document}